\newcommand{\proofend}{\hfill \hbox{\vrule width 5pt height 5pt depth
0pt}}
\newcommand{\Qtp}{\mathbb{Q}^{\mathrm{t}.p}}
\newcommand{\C}{\mathbb{C}}
\newcommand{\Z}{\mathbb{Z}}
\newcommand{\N}{\mathbb{N}}
\newcommand{\Q}{\mathbb{Q}}
\newcommand{\proj}{\mathbb{P}}
\newcommand{\F}{\mathbb{F}}
\newtheorem{thm}{Theorem}
\newtheorem{lemma}{Lemma}[section]
\newtheorem{corol}[thm]{Corollary}
\newtheorem{propo}[lemma]{Proposition}
\begin{document}

\title[A proof of the Schinzel-Zassenhaus conjecture]{A proof of the Schinzel-Zassenhaus conjecture on  polynomials}

\begin{abstract}
We prove that if $P(X) \in \Z[X]$ is an integer polynomial of degree $n$ and having $P(0) = 1$, then either $P(X)$ is a product of cyclotomic polynomials, or else at least one of the complex roots of $P$ belongs to the disk $|z| \leq 2^{ - 1 / (4n) }$. We also obtain a relative version of this result over the compositum $\Q^{\mathrm{ab}} \cdot \Qtp$ of all abelian
and all totally $p$-adic extensions of $\Q$, for any fixed prime~$p$, and apply it to prove a $\Q^{\mathrm{ab}} \cdot \Qtp$-relative
canonical height lower bound on the multiplicative group. Another extension is given to a uniform positive height lower bound, inverse-proportional to the total number of singular points,
on holonomic power series in $\Q[[X]]$ and not of the form $p(X) / (X^k-1)^m$, where $p(X) \in \Q[X]$, with 
a further application to existence of a small critical value for certain rational functions. 
\end{abstract}

\author{Vesselin Dimitrov}

\address{Department of Mathematics \\ University of Toronto \\ 40 St George Street \\
Toronto ON, M5S 2E5}

\email{vesselin.dimitrov@gmail.com; dimitrov@math.toronto.edu}

\maketitle

\section{Introduction and main results}

In this paper we propose a solution of the Schinzel-Zassenhaus conjecture on integer polynomials~\cite{schzass}. The method, approximately speaking, is to blend two distinct traditions in number theory: the arithmetic algebraization theorem of Carlson, P\'olya and Bertrandias (cf. Amice~\cite{amice}, \emph{Th\'eor\`eme 5.4.6}, and Chambert-Loir~\cite{algebraicite} for an overview of this subject); and a use of an Euler-Frobenius congruence at a single and fixed prime~$p$, rather in the style of Amoroso and Dvornicich~\cite{amorosodvornicich} (cf. Deligne~\cite{deligne} for the most general such congruence in the unramified case, in the form of the Frobenius-twisted identity $\mathrm{tr}(A^{p^n}) = F(\mathrm{tr}(A^{p^{n-1}}))$ for a square matrix $A$ over the ring of length-$n$ Witt vectors of an $\F_p$-algebra).

Our result is completely explicit:

\begin{thm}  \label{original}
  Let $P \in \Z[X]$ be a monic integer irreducible polynomial of degree $n > 1$. If $P$ is not cyclotomic, then
  $$
  \max_{\alpha: \, P(\alpha) = 0} |\alpha| \geq 2^{ \frac{1}{4n} } = 1 + \frac{\log{2}}{4n} + O(1/n^2).
  $$
\end{thm}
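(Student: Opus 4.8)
I would argue by contradiction. Suppose the house $M:=\max_{P(\alpha)=0}|\alpha|$ satisfies $M<2^{1/(4n)}$; since $P$ is irreducible of degree $n>1$ we have $P(0)\ne 0$, so by Kronecker's theorem an irreducible non‑cyclotomic $P$ must have a root off the closed unit disc, i.e. $1<M<2^{1/(4n)}$. It is convenient to pass to the reciprocal polynomial $P^{*}(X):=X^{n}P(1/X)\in 1+X\Z[X]$, whose zeros are the $1/\alpha$; thus $P^{*}$ is zero‑free on the disc $|z|<1/M$, where $1/M>2^{-1/(4n)}$. I would also carry along the power sums $p_{m}:=\sum_{P(\alpha)=0}\alpha^{m}=\operatorname{tr}(A^{m})$, $A$ the companion matrix of $P$, which are rational integers satisfying $-\log P^{*}(X)=\sum_{m\ge 1}p_{m}X^{m}/m$ and — this is the arithmetic input — the Euler–Frobenius congruences $p_{2^{a}m'}\equiv p_{2^{a-1}m'}\pmod{2^{a}}$ for every odd $m'$ and $a\ge 1$. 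This is the special case of Deligne's identity $\operatorname{tr}(A^{2^{a}})=F(\operatorname{tr}(A^{2^{a-1}}))$ over length‑$a$ Witt vectors for the $\F_{2}$‑algebra $\F_{2}$, where $F$ is the identity on $W_{a}(\F_{2})=\Z/2^{a}$, applied to $A^{m'}$.

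The heart of the proof is the construction of an auxiliary power series $f(X)\in 1+X\Q[[X]]$ attached to $P$ by a Frobenius‑averaging at the prime $2$: a suitably truncated product $f(X)=\prod_{0\le j\le K}P^{*}(X^{2^{j}})^{c_{j}}$ with carefully chosen rational exponents $c_{j}$ — equivalently $f=\exp\bigl(\sum_{m}\tilde p_{m}X^{m}/m\bigr)$, where the $\tilde p_{m}$ are the $2$‑adic Euler–Frobenius corrections of the $p_{m}$. The design goals are three properties. First, at every odd prime $p$ the coefficients of $f$ lie in $\Z_{p}$, which is automatic: $f$ is built from $\Z_{p}$‑powers of elements of $1+X\Z_{p}[[X]]$ and the $c_{j}$ are $2$‑adic units. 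Second, on the archimedean side $f$ should be meromorphic on a region whose capacity is controlled by $1/M$ together with the radii $|\alpha|^{-1/2^{j}}$ $(j\le K)$ at which the Frobenius twists $P^{*}(X^{2^{j}})$ acquire zeros, so that for a suitable truncation length $K$ this capacity stays just below $1$. Third, and crucially, on the $2$‑adic side the congruences $p_{2^{a}m'}\equiv p_{2^{a-1}m'}\pmod{2^{a}}$ should force the Taylor coefficients of $f$ to be so much more $2$‑divisible than one naively expects that $f$ is $2$‑adically analytic on a disc of radius strictly exceeding $1$. I expect this third point to be the main obstacle — quantifying exactly how far the $2$‑adic domain of analyticity is pushed out, and balancing that gain against both the archimedean capacity estimate and the truncation length $K$ — and it is precisely this balance that pins the exponent $1/(4n)$ (the archimedean capacity being $\asymp M^{-1}$, against a $2$‑adic factor of size $\asymp 2^{1/(2n)}$ arising from a square root built into $f$, since $P^{*}$ is squarefree of positive degree).

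With such an $f$ in hand, the conclusion follows from the arithmetic algebraization theorem of Carlson–P\'olya–Bertrandias (Amice, \emph{Th\'eor\`eme 5.4.6}) in its $S$‑integral form with $S=\{2,\infty\}$: a power series whose coefficients are $p$‑integral for all $p\notin S$ and which is meromorphic on archimedean and $2$‑adic domains whose capacities multiply to strictly more than $1$ is necessarily a rational function. Under $M<2^{1/(4n)}$ the numerology of the previous paragraph yields exactly this strict inequality, so $f$ is rational. It remains to see that this contradicts $P$ being irreducible, non‑cyclotomic of degree $n>1$: unwinding the relation $f=\prod_{j\le K}P^{*}(X^{2^{j}})^{c_{j}}$, rationality of $f$ forces the zeros of $P^{*}$, hence the roots $\alpha$ of $P$, to be roots of unity, so that $P$ is cyclotomic — a contradiction. (Concretely, the non‑rational algebraic function, such as a genuine square root $\sqrt{P^{*}(X^{2})}$, that the construction of $f$ produces cannot be rational unless $P^{*}$ fails to be squarefree.) Thus the proof reduces to the quantitative $2$‑adic estimate of the second step, which I regard as the crux.
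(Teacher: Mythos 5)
Your overall strategy---an auxiliary series built from $P^{\star}$ and the Euler--Frobenius congruences at $p=2$, fed into the Carlson--P\'olya--Bertrandias rationality theorem, with rationality then forcing roots of unity---is indeed the skeleton of the actual proof. But the step you yourself flag as the crux is both missing and, as you have set it up, aimed at the wrong place. You hope to win the needed factor on the $2$-adic side, by showing that the congruences $p_{2^{a}m'}\equiv p_{2^{a-1}m'}\pmod{2^{a}}$ make $f$ $2$-adically analytic on a disc of radius strictly greater than $1$ (a gain of size $\asymp 2^{1/(2n)}$). This cannot work: those congruences supply extra divisibility only at indices divisible by high powers of $2$, and only of size $2^{v_2(m)}$, which grows logarithmically at best; what they actually buy, via $P_4\equiv P_2 \pmod 4$ together with $\sqrt{1+4Y}\in\Z[[Y]]$, is exact integrality of the single series $f(1/X)=\sqrt{P_2^{\star}(1/X)\,P_4^{\star}(1/X)}\in\Z[[1/X]]$ --- i.e.\ an effective $2$-adic radius of exactly $1$, no more. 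With no gain at $2$, your capacity bookkeeping cannot close, because the naive Archimedean capacity of the singularity set (branch points at the $\alpha^2,\alpha^4$, hence a set reaching out to radius $M^4>1$) already exceeds $1$.

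The missing idea is Archimedean, not $2$-adic: one continues the integral series not over the complement of a disc but over the complement of a \emph{hedgehog}, the union of the at most $2n$ radial segments $[0,\alpha^{2}]$, $[0,\alpha^{4}]$, and invokes Dubinin's theorem, which bounds the transfinite diameter of an $m$-spiked hedgehog by $\bigl(\max_i|a_i|^{m}/4\bigr)^{1/m}$. The factor $4$ there (a segment of length $\ell$ has capacity $\ell/4$) is precisely the source of the constant: with $m=2n$ spikes of length at most $M^{4}$ one gets $d(\mathcal{K})\le M^{4}\,2^{-1/n}<1$ exactly when $M<2^{1/(4n)}$, and then Corollary~\ref{rationalinteger} (plain $\Z[[1/X]]$, no $S$-integral refinement needed) gives rationality of $\sqrt{P_2^{\star}P_4^{\star}}$. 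Your closing step also needs more care than ``a square root cannot be rational unless $P^{\star}$ fails to be squarefree'': rationality here means $P_2P_4$ is a perfect square, and one must rule this out via the Galois-orbit iteration $\alpha^{2}=\sigma(\alpha)^{4}\Rightarrow h(\alpha)=0$ (Lemma~\ref{square}), with a separate induction on degree when $P_2$ is reducible. So the proposal, as it stands, has a genuine gap at its central quantitative step, and the balance it envisages cannot be repaired without importing the hedgehog/Dubinin input.
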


Reflecting the $\{p, \infty\}$-adic nature of the method, our result extends in a relative form over the compositum $\Q^{\mathrm{ab}} \cdot \Qtp$ in $\C$ of all abelian extensions of $\Q$ and all
finite Galois extensions $K/\Q$ in which a fixed prime $p$ splits completely. Throughout the paper, let us denote by $O_p$ the ring of algebraic integers of this infinite Galois
extension $\Q^{\mathrm{ab}} \cdot \Q^{\mathrm{t.p}}$ of $\Q$.

\begin{thm} \label{sz}
Let   $P(X) \in (\mathbb{Q}^{\mathrm{ab}} \cdot \Qtp)[X]$ be an irreducible (over $\mathbb{Q}^{\mathrm{ab}} \cdot \Qtp$) polynomial of degree $n > 1$. With $K \subset \Q^{\mathrm{ab}} \cdot \Qtp$ a finite subextension of $(\Q^{\mathrm{ab}} \cdot \Qtp) / \Q$ containing the coefficients of $P$,  define $c_{P/K} \in \N$ to be the leading coefficient of the element of $\Z[X]$ proportional to $N_{K/\Q}(P)$ and without common prime factor for its coefficients, with the convention $c_{P/K} > 0$. (Thus $c_{P/K} \leq |N_{K/\Q}(a_0)|$ if $P(X) \in O_p[X]$ with
leading coefficient $a_0 \in O_p$, and in particular, $c_{P/K} = 1$ in the case of a monic $P(X) \in O_p[X]$.)

 Then
\begin{eqnarray*} \label{inab}
\frac{p^3}{[K:\Q]} \log{c_{P/K}} +  \frac{2p^2}{[K:\Q]} \sum_{ \sigma : K \hookrightarrow \C
  } \max_{\alpha: \, \sigma(P)(\alpha) = 0} \log^+{|\alpha|}  \geq \frac{\log{2}}{n}.
\end{eqnarray*}
In the case that $p$ is unramified in $F$, the inequality strengthens to
\begin{eqnarray*}
\frac{1}{2[K:\Q]} \log{c_{P/K}} +  \frac{1}{[K:\Q]} \sum_{ \sigma : K \hookrightarrow \C
  } \max_{\alpha: \, \sigma(P)(\alpha) = 0} \log^+{|\alpha|}  \geq \frac{\log{2}}{2p^2n}.
\end{eqnarray*}
Lastly, in the case that $P(X) \in \Qtp[X]$, it strengthens further to
\begin{eqnarray*}
\frac{1}{[K:\Q]} \log{c_P} +  \frac{1}{[K:\Q]} \sum_{ \sigma : K \hookrightarrow \C
  } \max_{\alpha: \, \sigma(P)(\alpha) = 0} \log^+{|\alpha|} \\
  - \frac{1}{p}\frac{1}{[K:\Q]} \sum_{ \sigma : K \hookrightarrow \C
  } \min_{\alpha: \, \sigma(P)(\alpha) = 0} \log^+{|1/\alpha|}  \geq \frac{\log{2}}{p^2n}.
\end{eqnarray*}
\end{thm}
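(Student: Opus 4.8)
The plan is to set up a Carlson--Pólya--Bertrandias type arithmetic algebraization argument applied to a cleverly chosen power series attached to $P$, in which an Euler--Frobenius congruence at the single prime $p$ supplies the crucial $p$-adic over-convergence input. Concretely, I would form the power series
\[
f(X) = \sum_{k \geq 0} s_k X^k, \qquad s_k = \sum_{\alpha:\, P(\alpha)=0} \alpha^{-k} \ \in \Q,
\]
the generating function of the reciprocal-root power sums, which is a rational function $f(X) = \widetilde{P}'(X)/\widetilde{P}(X)$ up to sign, where $\widetilde{P}(X) = X^n P(1/X)$; its finitely many poles are the reciprocals $1/\alpha$ of the roots of $P$, all of absolute value $\leq |1/\alpha|$, so that $f$ is holomorphic on the disk of radius $\rho_\infty := \bigl(\max_\alpha |1/\alpha|\bigr)^{-1}$ at each archimedean place. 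The point of passing to reciprocal roots rather than roots is that $P$ non-cyclotomic and irreducible forces $P(0) = \pm 1$ not to be a unit obstruction --- more precisely it guarantees $f$ is not a polynomial-over-$(X^k-1)^m$ shape --- so the algebraization theorem, if its hypotheses are met, yields a contradiction with the transcendence of a genuine power series.

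The key steps, in order, would be: \textbf{(1)} Establish the $p$-adic radius-of-convergence lower bound via an Euler--Frobenius / Deligne-type congruence. For $P(X) \in \Qtp[X]$, each root $\alpha$ lies in a completion $K_{\mathfrak{p}}$ with residue field $\F_p$, so Frobenius acts trivially on residues and one gets congruences $s_{pk} \equiv s_k^{?} \pmod{p}$ --- in the cleanest unramified case the Deligne identity $\mathrm{tr}(A^{p^m}) = F(\mathrm{tr}(A^{p^{m-1}}))$ for $A$ the companion matrix over length-$m$ Witt vectors gives $s_{p^m} \equiv s_{p^{m-1}} \pmod{p^m}$, and more generally $p$-adic control on the $s_k$ showing $f(X)$, cleared of denominators, has $p$-adic radius of convergence bounded below by a definite power $p^{-\kappa}$ with $\kappa$ depending only on the ramification (this is where the three regimes $p$ ramified in $F$ / $p$ unramified / $P \in \Qtp$ produce the three different constants $p^3, 2p^2$ versus $\tfrac12, p^2$ versus $1, p^2$ in the statement). \textbf{(2)} Feed the archimedean radius $\rho_\infty$ (controlled by $\log^+|1/\alpha|$, i.e.\ by $\log^+|\alpha|$ after noting $\sum \log^+|1/\alpha|$ relates to $\sum\log^+|\alpha|$ through $P(0)$ having norm linked to $c_{P/K}$), the $p$-adic radius $\rho_p \geq p^{-\kappa}$, and the denominator growth (governed by $c_{P/K}$, since the $s_k$ have denominators dividing a power of $c_{P/K}$) into the Bertrandias criterion: if the product of the relevant capacities/radii exceeds $1$, the series is rational; but a rational function with those pole constraints would have to be of the excluded cyclotomic type, contradiction. \textbf{(3)} Unwind the capacity inequality into the stated linear inequality, using $[K:\Q]^{-1}\sum_\sigma$ as the archimedean averaging and $\log c_{P/K}$ as the global contribution at the non-archimedean places, with the extra negative $\tfrac1p \min_\alpha \log^+|1/\alpha|$ term in the third regime coming from a sharpened lower bound on $\rho_p$ available when \emph{every} pole is $p$-adically integral, which lets one also exploit the smallest reciprocal root.

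The main obstacle I expect is Step (1): getting a clean, quantitatively sharp lower bound on the $p$-adic radius of convergence of the integral form of $f$, valid uniformly over all of $\Q^{\mathrm{ab}}\cdot\Qtp$ and with the \emph{right} constant. One must simultaneously (a) handle the abelian part $\Q^{\mathrm{ab}}$, where $p$ may be wildly ramified in cyclotomic towers, so that the naive Frobenius congruence degrades by a controlled power of $p$ --- this is the source of the $p^2$ and $p^3$ factors and requires a careful Witt-vector/ramification bookkeeping rather than the bare Deligne identity; (b) combine the local information at all places above $p$ across the finite layer $K$ into a single global denominator statement clean enough to plug into Bertrandias; and (c) ensure the "not of excluded type" hypothesis is genuinely triggered --- i.e.\ that an irreducible non-cyclotomic $P$ cannot have $\widetilde P'/\widetilde P$ secretly rational-of-forbidden-form, which is immediate here but in the later holonomic-series generalization becomes the substantive combinatorial condition on singular points. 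Granting a robust Step (1), Steps (2)--(3) are an essentially mechanical, if delicate, optimization of the capacity inequality.
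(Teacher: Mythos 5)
Your choice of auxiliary series is where the argument breaks down. The series $f(X)=\sum_k s_k X^k$ with $s_k=\sum_\alpha \alpha^{-k}$ is the logarithmic derivative of $X^nP(1/X)$ (up to sign), hence it is \emph{already a rational function} for every $P$. Bertrandias-type theorems are pure rationality criteria: their conclusion is ``the series is rational,'' so applied to an a priori rational series they yield no information whatsoever, and there is no ``contradiction with the transcendence of a genuine power series'' to be had. Worse, since a rational function continues analytically to the complement of its finitely many poles, a set of transfinite diameter zero, the capacity hypothesis is trivially satisfied and the criterion is vacuous; none of your Step (1) congruence bookkeeping on the $s_{p^m}$ can repair this, because the defect is structural, not quantitative. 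The paper's essential idea, which your proposal is missing, is to manufacture a series that is \emph{algebraic but generically irrational}: the $p$-th root radical $\sqrt[p]{P_{p^e}^{\star}(1/X)^{\,p-1}\,(P_{p^2}^{\kappa})^{\star}(1/X)}$ (for $p=2$, $\sqrt{P_2^\star P_4^\star}$), where $P_m(X)=a_0^m\prod_i(X-\alpha_i^m)$ and $\kappa$ is a Frobenius/Amoroso--Dvornicich element. The Euler--Frobenius congruence $P_{p^2}\equiv P_{p^e}^{\kappa}\bmod p^2$ is used not to bound power sums but to prove $p$-integrality of this radical's coefficients (via $(1+p^2Y)^{1/p}\in\Z[[Y]]$), so that when Bertrandias forces rationality one gets a nontrivial algebraic consequence: $P_{p^e}$ is a perfect $p$-th power or $P_{p^2}$ is $\kappa$-stable, which for an irreducible non-cyclotomic $P$ is excluded by a height/Galois-iteration lemma, or handled by induction on the degree and by level-lowering in the ramified case.

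Two further ingredients of the actual proof are absent from your sketch and are needed even granting a correct choice of series. Archimedeanly, convergence on a disk is far too weak: the $\log 2/n$ scale comes from analytically continuing the radical to the complement of a \emph{hedgehog} (the union of at most $2n$ radial slits through the points $\alpha^{p},\alpha^{p^2}$) and invoking Dubinin's theorem that such a hedgehog has transfinite diameter at most $\bigl(\max_i|a_i|^{m}/4\bigr)^{1/m}$; the factor $4$ there is precisely the source of the constant $\log 2$. Non-archimedeanly, the denominator contribution $\log c_{P/K}$ enters through the $v$-adic radii of convergence of the radical at places dividing the leading coefficient (the exponents $p^{e+1}\in\{p^2,p^3\}$ produce the three regimes in the statement), not through denominators of power sums. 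Finally, the third clause's negative term $-\frac1p\min_\alpha\log^+|1/\alpha|$ comes from a sharpened hedgehog estimate when $\kappa=\mathrm{id}$, namely $\max(|\alpha|^{p^2},|\alpha|^{p})=\max(|\alpha|,1)^{p^2}\max(|1/\alpha|,1)^{-p}$, not from an improved $p$-adic radius as you propose.
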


The proof of Theorem~\ref{original}, given on pages~15--16, requires only a relatively
small part 
of the preparatory material in the paper: Lemma~\ref{mod4}, Proposition~\ref{squarerad}, 
Lemma~\ref{square} and Corollary~\ref{rationalinteger}.
If $P^{\star}(X) := X^n P(1/X)$ is the reciprocal polynomial, the
monicity condition translates to $P^{\star}(0) = 1$, and
  Theorem~\ref{original} is equivalent to the dichotomy that either $P^{\star}(X)$ is
cyclotomic, or else $P^{\star}(z)$ has at least one complex root belonging to the disk $|z| \leq 2^{-\frac{1}{4n}}$.

 Our approach in a nutshell
consists of
 expanding
\begin{equation} \label{congruence}
f(1/X) := \sqrt{\prod_{\alpha: \, P(\alpha) = 0}(1 - \alpha^2 / X)(1 - \alpha^{4} / X)} \in \Z[[1/X]]
\end{equation}
in a formal power series about $\infty \in \proj^1$,  verifying that the coefficients in this expansion are integers, and applying
Bertrandias's theorem to the power series $f(1/X)$ and the ``hedgehog'' $\mathcal{K} = \mathcal{K}_f$ consisting of the radial line segments joining $0$ to the set of $2n$ (or fewer) roots $\{ \alpha^2, \alpha^4 \mid P(\alpha) = 0 \}$. It turns out that this hedgehog $\mathcal{K}$ has transfinite diameter less than $1$ if $\max_{\alpha: \, P(\alpha) = 0} |\alpha| < 2^{\frac{1}{4n}}$.


\medskip

{\it Acknowledgements.} I am grateful to Emmanuel Breuillard and P\'eter Varj\'u at Cambridge DPMMS,
and Jacob Tsimerman at the University of Toronto, for
 many inspiring discussions on
the subject of Lehmer's problem, and for their valuable feedback  clarifying the scope of the method proposed in this paper. The key idea of using a Diophantine
rationality theorem in the context of the Schinzel-Zassenhaus conjecture was inspired by the recent
 paper~\cite{bnz} by
Jason Bell, Khoa Nguyen and Umberto Zannier, specifically upon trying to make a further advance
on their Theorem~3.3 (on non-rational holonomic power series) in the univariate case. The result
of this enquiry, and our starting point for the present paper, comes down to Theorem~\ref{holonomic} below, and I
heartily
thank  the authors of~\cite{bnz}  for answering some of my questions on their work, and for an illuminating discussion.
The reference to the MathOverflow discussion~\cite{mo},
and in particular to Dubinin's work, was kindly pointed out to me by Alexander Er\"emenko and Fedor Nazarov.
I thank Jason Bell, Yuri Bilu, Pierre Deligne,  Art$\bar{\mathrm{u}}$ras Dubickas,  Philipp Habegger, Lars K\"uhne,
 Curt McMullen 
and Khoa Nguyen for their comments on a first draft of this paper.

Part of this research was done at Cambridge DPMMS. The author gratefully acknowledges support from the European Research Council via ERC grant GeTeMo~617129.

\section{The $p$-adic input: a mod $p^2$ congruence and integrality of the coefficients of a certain algebraic power series} \label{padic}

In this section we explain the key integrality property (\ref{congruence}) (case $p = 2$), and its generalization to an arbitrary prime $p$. This is the $p$-adic input of our approach. We begin with the simplest case, which suffices for the proof of Theorem~\ref{original}, and follow it up
with the generalization. The following is due independently to Smyth~\cite{smyth} and Arnold~\cite{arnoldeuler,arnold}. We include the short proof
for the reader's convenience.

\begin{lemma}[Arnold~\cite{arnoldeuler,arnold}, Smyth~\cite{smyth}]  \label{mod4}
  Consider $P(X) \in \Z[X]$ an integer monic polynomial, with factorization $P(X) = \prod_{i=1}^n (X - \alpha_i)$ over
  $\C$. Define the ancillary sequence of integer polynomials $(P_m)_{m \in \N}$ by
  $$
  P_m(X) := \prod_{i=1}^n (X - \alpha_i^m) \in \Z[X].
  $$
  Then the following mod $4$ congruence holds:
  \begin{equation}\label{modph}
    P_{4} \equiv P_2 \mod{4 \, \Z[X]}.
  \end{equation}
\end{lemma}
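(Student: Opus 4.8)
The plan is to derive this from two trivial reduction-mod-$2$ facts plus one ``squaring identity'', arranged so that the passage from mod $2$ to mod $4$ never divides by anything; a Newton's-identities approach, by contrast, introduces factorials in the denominators and is the one real pitfall to be avoided here.

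The two facts, valid for every $Q(X)\in\Z[X]$: (a) $Q(-X)\equiv Q(X)\pmod{2\,\Z[X]}$, the difference being twice the odd part of $Q$; and (b) $Q(X)^2\equiv Q(X^2)\pmod{2\,\Z[X]}$, by expanding the square and using $c^2\equiv c\pmod 2$ coefficientwise. The squaring identity is $\prod_{i=1}^n\big[(X-\alpha_i)(-X-\alpha_i)\big]=(-1)^n\prod_{i=1}^n (X^2-\alpha_i^2)$, i.e. $P_2(X^2)=(-1)^n P(X)P(-X)$; applying it with $P$ replaced by the monic integer polynomial $P_2$ (whose roots are the $\alpha_i^2$) also gives $P_4(X^2)=(-1)^n P_2(X)P_2(-X)$. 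Integrality of $P_2$ and $P_4$ is the standard symmetric-function fact (already part of the statement), and one notes that no $\alpha_i$ needs to be nonzero anywhere below.

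First I would establish the mod-$2$ shadow of the lemma: since $(-1)^n\equiv 1$ and $P(-X)\equiv P(X)\pmod 2$ by (a), the squaring identity gives $P_2(X^2)=(-1)^n P(X)P(-X)\equiv P(X)^2\equiv P(X^2)\pmod 2$ using (b), and comparing coefficients of the even powers of $X$ (which forces equality mod $2$ of the corresponding coefficients of $P_2$ and $P$) yields $P_2\equiv P\pmod{2\,\Z[X]}$. Write $P_2(X)=P(X)+2\psi(X)$ with $\psi\in\Z[X]$. The target $P_4\equiv P_2\pmod{4\,\Z[X]}$ is, by the same even-powers comparison, equivalent to $P_4(X^2)\equiv P_2(X^2)\pmod 4$, hence by the two squaring identities to $P_2(X)P_2(-X)\equiv P(X)P(-X)\pmod 4$. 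Expanding $P_2(X)P_2(-X)=\big(P(X)+2\psi(X)\big)\big(P(-X)+2\psi(-X)\big)$ and dropping the $4\psi(X)\psi(-X)$ term, its difference from $P(X)P(-X)$ is $2\big(P(X)\psi(-X)+P(-X)\psi(X)\big)$, so it remains only to see the bracket is even; but modulo $2$, $P(-X)\equiv P(X)$ and $\psi(-X)\equiv\psi(X)$ by (a), so the bracket is $\equiv 2P(X)\psi(X)\equiv 0\pmod 2$, which completes the proof. I do not expect any genuine obstacle here: the whole argument is formal once (a), (b) and the squaring identity are in hand, the only points meriting a word of care being the (routine) passages from a congruence in $X^2$ to one in $X$, and the implicit use of $P_2,P_4\in\Z[X]$ so that the congruences are meaningful.
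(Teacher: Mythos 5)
Your proof is correct, but it follows a genuinely different route from the paper. The paper argues via symmetric functions: it first checks the congruence for the trace coefficient, writing the power sums through the Newton--Girard formulas $s_2=e_1^2-2e_2$ and $s_4=e_1^4+2e_2^2-4(e_1^2e_2-e_1e_3+e_4)$ (note these go in the integral direction, power sums in terms of the $e_i$, so the factorial-denominator ``pitfall'' you warn against does not actually arise there), and then obtains the congruence for the $X^{n-k}$ coefficient by applying the same computation to the $k$-th symmetric power polynomial $\prod_I(X-\alpha_{i_1}\cdots\alpha_{i_k})\in\Z[X]$. You instead work entirely inside $\Z[X]$ with the Graeffe-type identity $P_2(X^2)=(-1)^nP(X)P(-X)$ (and its iterate $P_4(X^2)=(-1)^nP_2(X)P_2(-X)$): first the mod $2$ shadow $P_2\equiv P$, then the lift to mod $4$ by writing $P_2=P+2\psi$ and expanding $P_2(X)P_2(-X)$, with the cross term killed because $P(X)\psi(-X)+P(-X)\psi(X)$ is even. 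All steps check out, including the passages between congruences in $X$ and in $X^2$ and the harmless case of vanishing roots. Your approach buys a self-contained, coefficient-free argument needing no symmetric-power construction and making transparent exactly where the extra factor of $2$ comes from; the paper's approach buys a template that transfers directly to the general prime $p$ and to the relative setting of Lemma~\ref{cong}, where the reduction to power sums plus Vinberg's multinomial congruences (or Deligne's identity) is the natural mechanism, whereas your squaring identity is specific to $p=2$ and would require working with $p$-th roots of unity to generalize.
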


\begin{proof}
  The simplest way to see it is by the following symmetric powers argument, told to me by Jacob Tsimerman. First off,
  we easily see the congruence $\sum_{i=1}^n \alpha_i^4 \equiv \sum_{i=1}^n \alpha_i^2 \mod{4}$ for the $X^{n-1}$ terms. For, by the
  Newton-Girard
  identities expressing the power sums $s_m := \sum_{i=1}^n \alpha_i^m$ as polynomials in the $n$ elementary symmetric functions $e_1, \ldots,
  e_n \in \Z$ of the roots $\alpha_i$, we have
\begin{eqnarray*}
  s_2 = e_1^2 - 2e_2, \\
  s_4 = e_1^4 +2e_2^2 -4(e_1^2e_2-e_1e_3+e_4),
\end{eqnarray*}
which are manifestly congruent mod $4$ as $e_1, e_2, e_3, e_4 \in \Z$.

For the general case with the $X^{n-k}$ coefficients, we simply apply the previous to the
new ``$k$-th symmetric power'' polynomial
$$
\prod_{I} (X - \alpha_{i_1} \cdots \alpha_{i_k}) \in \Z[X],
$$
 where
the product is over all $k$-element sets $I = \{i_1, \ldots, i_k\} \subset \{1, \ldots,n\}$.
\end{proof}

The following consequence is the crucial $2$-adic integrality property needed for our proof
of Theorem~\ref{original}. It is based on the observation that
\begin{equation} \label{2int}
\sqrt{1 + 4Y} = \sum_{k =0}^{\infty}
\binom{1/2}{k} 4^kY^k \in \Z[[Y]].
\end{equation}

\begin{propo} \label{squarerad}
  Consider a polynomial $Q \in \Z[X]$ such that $Q(0) = 1$ and $Q \mod{4}$ is a perfect
  square in $(\Z/4)[X]$. Then the square radical $\sqrt{Q(X)}$ has an $X = 0$ Taylor expansion
  with integer coefficients: $\sqrt{Q(X)} \in 1 + X \Z[[X]]$.

  In particular, for every integer  polynomial $P \in \Z[X]$ having $P(0) = 1$, we have
  \begin{equation}  \label{radical}
  \sqrt{P_2(X)P_4(X)} \in \Z[[X]].
  \end{equation}
\end{propo}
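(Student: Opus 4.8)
The plan is to prove integrality of $\sqrt{Q}$ one prime at a time. Since $Q(0)=1$ we have $Q-1\in X\,\Z[X]$, so the binomial series $\sqrt{Q}:=\sum_{k\ge 0}\binom{1/2}{k}(Q-1)^k$ is a well-defined element of $\Q[[X]]$, and it is the unique square root of $Q$ lying in $1+X\,\Q[[X]]$ (if $f,g\in 1+X\,\Q[[X]]$ satisfy $f^2=g^2$ then $f+g$ is a unit and $f=g$). The coefficients $\binom{1/2}{k}$ are rationals whose reduced denominators are powers of $2$ --- for instance $\binom{1/2}{k}=(-1)^{k-1}2^{1-2k}C_{k-1}$ for $k\ge 1$, where $C_{k-1}=\frac{1}{k}\binom{2k-2}{k-1}\in\Z$ is the Catalan number (this identity also re-proves $(\ref{2int})$, since then $4^k\binom{1/2}{k}=2(-1)^{k-1}C_{k-1}\in\Z$) --- so already $\sqrt{Q}\in\Z[1/2][[X]]$. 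Hence it suffices to prove in addition that $\sqrt{Q}\in\Z_2[[X]]$: a rational number lying in both $\Z[1/2]$ and $\Z_2$ lies in $\Z$, so this forces $\sqrt{Q}\in\Z[[X]]$, and then $\sqrt{Q}\in 1+X\,\Z[[X]]$ because its constant term is $1$.

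The $2$-adic claim is where the hypothesis that $Q\bmod 4$ is a perfect square enters, in conjunction with $(\ref{2int})$ and with the normalization $Q(0)=1$. I would write $Q=R^2+4S$ with $R,S\in\Z[X]$ (possible since a lift to $\Z[X]$ of a square root of $Q$ in $(\Z/4)[X]$ differs from a square by a multiple of $4$). From $R(0)^2\equiv Q(0)=1\pmod 4$ the constant $R(0)$ is odd, so $R$ is a unit of $\Z_2[[X]]$ and $Q=R^2\,(1+4v)$ with $v:=S/R^2\in\Z_2[[X]]$. Now I would use $Q(0)=1$ a second time: $R(0)$ odd forces $R(0)^2\equiv 1\pmod 8$, hence $4S(0)=1-R(0)^2\equiv 0\pmod 8$ and the constant term $v(0)=S(0)/R(0)^2$ lies in $2\,\Z_2$. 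Splitting off the constant term, $1+4v=(1+4v(0))\cdot(1+4w)$ with $w:=(v-v(0))/(1+4v(0))\in X\,\Z_2[[X]]$. Here $1+4v(0)\in 1+8\,\Z_2$ is a square in $\Z_2$ by Hensel's lemma applied to $X^2-(1+4v(0))$ at $X=1$, say $1+4v(0)=r^2$ with $r\in\Z_2$; and $(\ref{2int})$ applied with $Y=w$ gives $\sqrt{1+4w}=\sum_{k\ge 0}\binom{1/2}{k}4^kw^k\in\Z_2[[X]]$. Therefore $Q=\bigl(R\,r\,\sqrt{1+4w}\bigr)^2$ with $R\,r\,\sqrt{1+4w}\in\Z_2[[X]]$; since the square of its constant term equals $Q(0)=1$, this element is $\pm\sqrt{Q}$, and in either case $\sqrt{Q}\in\Z_2[[X]]$, completing the proof of the first assertion.

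The step I expect to require the most care is precisely this $2$-adic bookkeeping: the hypothesis ``$Q\bmod 4$ is a square'' alone is not enough to split off a $2$-adic square constant --- e.g.\ by itself it does not prevent $1+4v(0)$ from being $5$, which is not a square in $\Z_2$ --- and it is the extra hypothesis $Q(0)=1$ that forces $v(0)\in 2\Z_2$, i.e.\ $1+4v(0)\in 1+8\Z_2\subset\Z_2^{\times 2}$, making the argument go through.

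For the displayed consequence $(\ref{radical})$, I would take $Q:=P_2P_4$ for $P\in\Z[X]$ monic with $P(0)=1$. Lemma~\ref{mod4} gives $P_4\equiv P_2\pmod 4$, so $Q\equiv P_2^{\,2}\pmod 4$ is a perfect square in $(\Z/4)[X]$; and from $P(X)=\prod_i(X-\alpha_i)$ one has $\prod_i\alpha_i=(-1)^nP(0)=(-1)^n$, hence $P_m(0)=(-1)^n(\prod_i\alpha_i)^m=(-1)^{n(m+1)}$, so $Q(0)=P_2(0)P_4(0)=(-1)^{3n}(-1)^{5n}=1$. Applying the first part of the Proposition to this $Q$ then yields $\sqrt{P_2P_4}\in 1+X\,\Z[[X]]$.
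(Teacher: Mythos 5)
Your proof is correct, and the second part (taking $Q = P_2P_4$, checking $Q(0)=1$ and $Q \equiv P_2^2 \bmod 4$ via Lemma~\ref{mod4}) matches the paper. For the first part you use the same key identity (\ref{2int}) and the same mod-4 decomposition $Q = R^2 + 4S$, but your handling of the constant term is genuinely different and more laborious than the paper's: you split the problem into a denominator bound ($\binom{1/2}{k} \in \Z[1/2]$ via Catalan numbers) and a $2$-adic integrality statement, proved by factoring off $1+4v(0) \in 1+8\Z_2$ and invoking Hensel's lemma, then concluding from $\Z[1/2]\cap\Z_2 = \Z$. The paper short-circuits all of this by normalizing the lift over $\Z$ itself: since $R(0)^2 \equiv Q(0) = 1 \bmod 4$, the constant $R(0)$ is odd, and replacing $R$ by $\pm R + (\text{even integer})$ does not change $R^2 \bmod 4$, so one may assume $U(0)=1$ exactly in $Q = U^2 + 4V$, forcing $V(0)=0$; then $V/U^2 \in X\Z[[X]]$ because $U$ is a unit of $\Z[[X]]$, and $\sqrt{Q} = U\sqrt{1+4V/U^2} \in \Z[[X]]$ in one stroke from (\ref{2int}), with no localization, no Hensel, and no denominator bookkeeping. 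The step you flagged as delicate — that $1+4v(0)$ might a priori be a non-square like $5$ — is thus dissolved rather than confronted: the hypothesis $Q(0)=1$ is used once, to pin the constant term of the lift, instead of twice as in your argument. Your route does have the small side benefit of isolating exactly where each hypothesis enters and of re-deriving (\ref{2int}) explicitly, but the paper's normalization is the more economical argument.
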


\begin{proof}
  By assumption, we can write $Q(X) = U(X)^2 + 4V(X)$, with $U(X), V(X) \in \Z[X]$
  and $U(0) = 1, V(0) = 0$. We have
  $$
  \sqrt{Q(X)} = U(X) \sqrt{ 1 + 4 V(X) / U(X)^2 }  \in \Z[[X]],
  $$
  upon applying (\ref{2int}) with $Y := V(X) / U(X)^2 \in X\Z[[X]]$.
\end{proof}

{\it Examples. } Interesting examples of algebraic or holonomic power series
with integer coefficients are often found among the generating functions enumerating
certain lattice walks, cf.~\cite{bostan,bousquet}.
The simplest, and most celebrated cases are the Catalan, Schr\"oder and
Motzkin numbers, of respective generating functions
$$
\frac{1 - \sqrt{1-4x}}{2x}, \quad \frac{1 -  x - \sqrt{1-6x + x^2}}{2x},
$$
and
$$
 \frac{1 -  x - \sqrt{1-2x -3x^2}}{2x^2}.
$$
In the light of Proposition~\ref{squarerad}, we see
a purely algebraic reason that these quadratic algebraic functions happen
to have integer coefficients expansions:
 the expressions under the square root are congruent modulo $4$
to a perfect square: respectively, $1, (x-1)^2$ and $(x-1)^2$. \proofend

\medskip

Next, we figure out the cases that the function~(\ref{radical}) is rational,
i.e. that the polynomial $P_2P_4$ is a perfect square. This is the place in the
argument where the cyclotomic cases are recognized and excluded.

\begin{lemma} \label{square}
  Suppose that $P(X) \in \Z[X]$ is a monic integer irreducible polynomial
  of $\deg{P} > 1$, such that $P_2(X)$ is not
 a perfect square. Then the following are equivalent:
  \begin{itemize}
    \item[(i)] $P$ is a cyclotomic polynomial of an odd level;
    \item[(ii)] $P_2 = P_4$;
    \item[(iii)] $P_2P_4$ is a perfect square;
    \item[(iv)] the function $\sqrt{P_2P_4}$ is rational.
    \end{itemize}
  \end{lemma}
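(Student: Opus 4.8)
The plan is to establish the cycle of implications (i) $\Rightarrow$ (ii) $\Rightarrow$ (iii) $\Rightarrow$ (iv) $\Rightarrow$ (i), using at the last step the hypothesis that $P_2$ is not a perfect square. The implication (iii) $\Rightarrow$ (iv) is immediate, since $\sqrt{P_2 P_4}$ is the square root of a polynomial which, being a perfect square, makes the radical itself a polynomial, hence rational. For (ii) $\Rightarrow$ (iii), if $P_2 = P_4$ then $P_2 P_4 = P_2^2$ is trivially a perfect square. So the substantive work is in (i) $\Rightarrow$ (ii) and in the reverse implication (iv) $\Rightarrow$ (i).

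For (i) $\Rightarrow$ (ii): if $P = \Phi_d$ is the $d$-th cyclotomic polynomial with $d$ odd, then its roots are the primitive $d$-th roots of unity, and squaring (resp. raising to the fourth power) permutes the primitive $d$-th roots of unity whenever $\gcd(2,d)=1$ (resp. $\gcd(4,d)=1$), which holds precisely because $d$ is odd. Hence $P_2$ and $P_4$ both equal $\prod_{\zeta \text{ primitive } d\text{-th root}} (X-\zeta) = \Phi_d = P$, and in particular $P_2 = P_4$.

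For (iv) $\Rightarrow$ (i): suppose $\sqrt{P_2 P_4}$ is rational; since $P_2 P_4 \in \Z[X]$ is monic with nonzero constant term (as $P(0) \ne 0$, $P$ being irreducible of degree $>1$), rationality of the square root forces $P_2 P_4$ to be a perfect square in $\Z[X]$, say $P_2 P_4 = R^2$. Now one analyzes the multiset of roots: writing $\alpha_1, \dots, \alpha_n$ for the roots of $P$, the multiset $\{\alpha_i^2\} \cup \{\alpha_i^4\}$ must be a union of pairs (with multiplicity). The key is that $P$ is irreducible, so the $\alpha_i$ form a single Galois orbit; the squaring map $\alpha \mapsto \alpha^2$ sends the root set of $P$ to the root set of the irreducible polynomial $P_2$ (up to the power to which it is a perfect square — here we use precisely the hypothesis that $P_2$ is \emph{not} a perfect square, so $P_2$ is, up to a power of an irreducible, honestly irreducible, and in fact irreducible or a proper power; one shows it must be irreducible). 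Then the perfect-square condition $P_2 P_4 = R^2$, combined with $P_2$ squarefree-ish, forces $P_2 = P_4$, i.e. the root sets $\{\alpha_i^2\}$ and $\{\alpha_i^4\}$ coincide as multisets. Thus the map $\beta \mapsto \beta^2$ permutes the roots of $P_2$; iterating, every root $\beta$ of $P_2$ satisfies $\beta^{2^k} = \beta$ for some $k \ge 1$, whence $\beta$ is a root of unity, and in fact of odd order (being fixed by an odd power $2^k - 1 \cdot (\text{unit})$ argument: $\beta^{2^k - 1} = 1$ with $2^k - 1$ odd). By Kronecker's theorem $P$ itself then has all roots roots of unity, so $P = \Phi_d$ for some $d$; and $\alpha \mapsto \alpha^2$ being a bijection on the primitive $d$-th roots of unity forces $\gcd(2, d) = 1$, i.e. $d$ odd.

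The main obstacle I expect is the bookkeeping in (iv) $\Rightarrow$ (i): one must carefully deduce from "$P_2 P_4$ is a perfect square and $P_2$ is not" that $P_2 = P_4$ as polynomials, rather than merely that their product is a square. This requires knowing the factorization type of $P_2$ (and $P_4$): since $P$ is irreducible, $P_2$ is a power $Q^e$ of an irreducible $Q$, where $e \mid n$ divides the degree of the fiber of the squaring map on the Galois orbit; and similarly $P_4 = Q'^{e'}$. The hypothesis "$P_2$ not a perfect square" rules out $e$ even. One then matches up irreducible factors in $P_2 P_4 = R^2$, using that $Q$ and $Q'$ are each determined by a single Galois orbit, to conclude $Q = Q'$ and $e = e'$ — hence $P_2 = P_4$. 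Once $P_2 = P_4$ is in hand, the root-of-unity conclusion via Kronecker is routine.
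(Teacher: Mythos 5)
Your cycle of implications is sound in its core, and for the substantive step it stays close in spirit to the paper's argument while using a different mechanism. The paper likewise exploits that the hypothesis forces $P_2$ to be irreducible, but instead of first proving the polynomial identity $P_2=P_4$ it extracts a single relation: some root $\alpha^2$ of $P_2$ must occur among the roots $\sigma(\alpha)^4$ of $P_4$ for the product to be a square, and iterating $\alpha^2=\sigma(\alpha^4)$ through the order $k$ of $\sigma$ gives $\alpha^2=\alpha^{2^{k+1}}$ (alternatively $2h(\alpha)=h(\alpha^2)=h(\alpha^4)=4h(\alpha)$, so $h(\alpha)=0$); either way $\alpha$ is a root of unity. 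Your route --- unique factorization in $\Z[X]$ to force $P_2=P_4$, then the remark that squaring permutes the finite root set of $P_2$, so $\beta^{2^k}=\beta$ --- is correct and even makes the implication (iii) $\Rightarrow$ (ii) explicit, which the paper leaves implicit; to make it airtight, just state cleanly that $P_2=Q^{e}$ and $P_4=Q'^{e'}$ with $Q,Q'$ irreducible (single Galois orbits, constant multiplicities), that ``not a perfect square'' forces $e$ odd, and that $Q\neq Q'$ would leave $Q$ with odd multiplicity in $P_2P_4$.

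The genuine gap is your very last step. You assert that $z\mapsto z^2$ is a bijection on the primitive $d$-th roots of unity, but what you proved is only that it permutes the roots of $P_2$, i.e.\ the squares of the roots of $P$; bijectivity does not transfer back to the roots of $P$. Moreover this step cannot be repaired: $P=\Phi_6=X^2-X+1$ satisfies every hypothesis ($P_2=\Phi_3$ is irreducible, hence not a perfect square) and has $P_2=P_4=\Phi_3$, yet its level is even; the same happens for every $\Phi_{2m}$ with $m>1$ odd, since $\Phi_{2m}(X)=\Phi_m(-X)$. So ``odd level'' is not a consequence of (ii)--(iv) as literally stated, and (i) should really be read as ``cyclotomic of level $N$ with $4\nmid N$'' (equivalently, $P(X)$ or $P(-X)$ cyclotomic of odd level). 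You are in good company: the paper's own proof stops at ``$\alpha$ is a root of unity'' and never addresses the parity of the level, and only the conclusion that $P$ is cyclotomic is used in the applications, so apart from this unprovable (and, in the paper, unproved) parity claim your argument is complete.
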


\begin{proof}
  If $P = \Phi_N$ with $N$ odd, it is plain that $P_4 = P_{2} = \Phi_N$ also, and hence
$\sqrt{P_2P_4} = \Phi_N$.

Reversely, suppose that $P_2P_4$ is a perfect square,
and take any root $\alpha$ of our irreducible polynomial $P$. Since we assume the polynomial $P_2 \in \Z[X]$
to be irreducible,  $\alpha^2$ has to be among the roots $\sigma(\alpha)^{4}$
of $P_4$, with $\sigma$ ranging over the Galois group of the polynomial $P$. Letting $k$ the order of $\sigma$, we get
by a $k$-fold iteration:
\begin{eqnarray*}
\alpha^2 = \sigma(\alpha^2)^{2} = \sigma^2(\alpha)^{2^3} = \sigma^2( \alpha^2)^{2^2}
= \cdots = \sigma^k(\alpha)^{2^{k+1}} = \alpha^{2^{k+1}}.
\end{eqnarray*}
Hence $\alpha$ is a root of unity.

An alternative argument, based on the theory of the Weil height, is to note that $\alpha^2 = \sigma(\alpha^{4})$ implies
that $2h(\alpha) = h(\alpha^2) = h(\alpha^{4}) = 4h(\alpha)$, hence $h(\alpha) = 0$, and once again $\alpha$
is a root of unity.
\end{proof}

In the remainder of this section, we extend our preparatory results so far to the relative
setting over $\Q^{\mathrm{ab}} \cdot \Qtp$. The reader interested only
in the proof of Theorem~\ref{original} can skip directly to section~\ref{hedgehogs}.

We begin by characterizing the ring of algebraic $p$-integers in $\Q^{\mathrm{ab}} \cdot \Qtp$: 
the elements that fall in the valuation ring 
$$
O_{\C_p} = \{ x \in \C_p \mid |x|_p \leq 1 \}
$$
under every field embedding $\Q^{\mathrm{ab}} \cdot \Qtp \hookrightarrow \C_p$ in the completion 
$\C_p$ of an algebraic closure of $\Q_p$. Let $R_p \subset \Qtp$ be the subring
of algebraic $p$-integers in $\Qtp$.

\begin{lemma} \label{pintegers}
  The algebraic $p$-integers in $\Q^{\mathrm{ab}} \cdot \Qtp$ consist precisely of
  the elements of the form
  $$
  \eta = f(\zeta), \quad \textrm{where } f(X) \in R_p[X] \textrm{ and } \zeta \in \mu_{\infty} \textrm{ is a root of unity}.
  $$ 
\end{lemma}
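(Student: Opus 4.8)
The plan is to prove the two inclusions separately. For the easy direction, suppose $\eta = f(\zeta)$ with $f\in R_p[X]$ and $\zeta\in\mu_\infty$; then since $\zeta$ is a root of unity it is a $p$-adic unit under every embedding into $\C_p$, so $f(\zeta)$ is a $\Z_p$-combination (in fact an $O_{\C_p}$-combination) of $p$-integral elements, hence lies in $O_{\C_p}$ under every embedding. So every such $\eta$ is an algebraic $p$-integer. The content is the reverse inclusion.

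For the hard direction, first I would recall the structure of $\Q^{\mathrm{ab}}\cdot\Q^{\mathrm{t.p}}$: by Kronecker–Weber $\Q^{\mathrm{ab}} = \Q(\mu_\infty)$, so any element $\eta$ of $\Q^{\mathrm{ab}}\cdot\Q^{\mathrm{t.p}}$ lies in $K(\zeta)$ for some root of unity $\zeta$ and some finite $K\subset\Q^{\mathrm{t.p}}$; thus $\eta = f(\zeta)$ for \emph{some} $f(X)\in K[X]$, and the task is to arrange that $f$ can be taken with coefficients in $R_p$, i.e. in the $p$-integers of $\Q^{\mathrm{t.p}}$. The key local fact is that $p$ is \emph{unramified} in $\Q^{\mathrm{t.p}}$ (indeed totally split) while the cyclotomic part $\Q(\mu_\infty)$ carries all the ramification at $p$ coming from $\mu_{p^\infty}$. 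I would separate the prime-to-$p$ part and the $p$-power part of $\zeta$: write $\zeta = \zeta'\zeta_{p^k}$ with $\zeta'$ of order prime to $p$ and $\zeta_{p^k}$ a primitive $p^k$-th root of unity. Adjoining $\zeta'$ to $\Q^{\mathrm{t.p}}$ is unramified at $p$, so one reduces to controlling integrality in $L := \Q^{\mathrm{t.p}}(\zeta')$, which is unramified at $p$ with residue extensions of $\F_p$; here the ring of $p$-integers is generated over $R_p$ by $\zeta'$ (a unit), so $R_p[\zeta']$ is integrally closed locally at $p$. Then adjoining $\zeta_{p^k}$: the extension $L(\zeta_{p^k})/L$ is totally ramified at the primes above $p$ with uniformizer $1-\zeta_{p^k}$, and the local ring of integers is exactly $O_{L,p}[\zeta_{p^k}] = O_{L,p}[1-\zeta_{p^k}]$, i.e. every $p$-integer of $L(\zeta_{p^k})$ is a polynomial in $\zeta_{p^k}$ with $p$-integral coefficients in $L$ (of degree $<\varphi(p^k)$, say). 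Combining, every algebraic $p$-integer $\eta$ of $\Q^{\mathrm{ab}}\cdot\Q^{\mathrm{t.p}}$ is a polynomial in $\zeta = \zeta'\zeta_{p^k}$ with coefficients in $R_p$, which is the asserted form.

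I would make this rigorous by working one finite layer at a time and invoking the standard description of rings of integers in cyclotomic and unramified extensions of local fields: for a finite unramified $F/\Q_p$ with residue field $k$, $O_F[\zeta'] = O_F$ whenever $\F_p(\bar\zeta')=k$, and for the totally ramified $F(\zeta_{p^k})/F$, $O_{F(\zeta_{p^k})} = O_F[\zeta_{p^k}]$ (a monogenic extension with $1-\zeta_{p^k}$ a uniformizer, NE-basis $1,\zeta_{p^k},\dots,\zeta_{p^k}^{\varphi(p^k)-1}$). One then globalizes: an element is a $p$-integer of $\Q^{\mathrm{ab}}\cdot\Q^{\mathrm{t.p}}$ iff it is a $p$-integer in some finite subfield, and the subfield may be taken of the form $K(\zeta)$ with $K/\Q$ totally split at $p$ and $\zeta\in\mu_\infty$, so the local computation at each prime above $p$ applies uniformly (all such primes being $\mathrm{Gal}$-conjugate and carrying the same ramification data). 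The main obstacle I anticipate is the bookkeeping that the local monogenic generators $\zeta'$ and $\zeta_{p^k}$ can be combined into the \emph{single} generator $\zeta = \zeta'\zeta_{p^k}$ and that the coefficients land in $R_p$ rather than merely in $O_{\C_p}$ — i.e. checking that $O_{L,p}[\zeta]=O_{L,p}[\zeta'][\zeta_{p^k}]$ when the two generator-degrees are coprime, and that "$p$-integral in the finite layer" really does descend to an element of the global ring $R_p$ of $p$-integers of $\Q^{\mathrm{t.p}}$. This is routine but is where care is needed; once it is in place the lemma follows.
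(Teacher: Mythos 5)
Your plan is correct and, at bottom, follows the same decomposition as the paper: split the cyclotomic conductor into its prime-to-$p$ and $p$-power parts and treat the unramified layer and the totally ramified layer separately, then recombine using that $\zeta'$ and $\zeta_{p^k}$ are powers of the single root of unity $\zeta$. The differences are in how each layer is handled. For the ramified layer the paper expands $\eta$ in powers of $\zeta_{p^f}-1$ with coefficients in the prime-to-$p$ layer and compares valuations directly (the powers $(\zeta_{p^f}-1)^j$, $0\le j<p^{f-1}(p-1)$, have distinct valuations in $[0,1)$ while the coefficient valuations are integers); this is exactly the computation underlying the standard monogenicity fact $O_F[\zeta_{p^k}]=O_{F(\zeta_{p^k})}$ that you cite, so there the two arguments agree in substance. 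For the unramified layer the paper argues globally: it recovers the coefficients of $f$ by Cramer's rule from the $\mathrm{Gal}$-conjugates of $\eta$ over $\Qtp$, the determinant being invertible in $R_p[1/N]=R_p$ since $\prod_{\zeta\in\mu_N\setminus\{1\}}(1-\zeta)=N$ and $p\nmid N$; you instead invoke local monogenicity of unramified extensions. Both work, but the ``routine bookkeeping'' you defer is exactly where your route needs the degree identifications that make local integrality descend to $R_p$: namely $[\Qtp(\zeta_M):\Qtp]=\mathrm{ord}_M(p)=[\Q_p(\zeta_M):\Q_p]$ (because $\Qtp\cap\Q(\mu_M)$ is the fixed field of Frobenius) and $\Q(\mu_{p^k})\cap\Qtp(\zeta')=\Q$ (an unramified field meets a totally ramified one trivially); with these, the uniqueness of the degree-bounded expansion identifies the images of the global coefficients with the local integral ones, and your argument closes. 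That verification is what the paper's Cramer-determinant and valuation computations accomplish in one stroke, so what you flag as the delicate point is indeed routine, but it should be stated rather than waved at.
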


\begin{proof}
Clearly these elements are algebraic $p$-integers in $\Q^{\mathrm{ab}} \cdot \Qtp$. 
For the converse, let $\eta \in \Q^{\mathrm{ab}} \cdot \Qtp$ be a $p$-integral
element, and consider first the case that $p$ does not ramify in $\Q(\eta)$. 
By the Kronecker-Weber theorem, there is then a root of unity $\zeta_N$ 
of an order $N \not\equiv 0\mod{p}$, and a unique polynomial $f(X) \in \Qtp[X]$ of 
$\deg{f} < [\Qtp(\zeta_N):\Qtp]$, such
that $\eta = f(\zeta_N)$. We need to see that the coefficients of $f(X)$ belong to 
the subring $R_p$ of algebraic $p$-integers in $\Qtp$. This follows upon noting that the $\mathrm{Gal}((C_N \cdot \Qtp) / \Qtp)$-conjugates
of $\eta$ are expressed as a linear system in the coefficients of $f$, with $N$-th roots of unity
as the coefficients of the linear system. As $\prod_{\zeta \in \mu_N \setminus \{1\}} (1-\zeta) = N$, 
this  non-vanishing square determinant has an inverse in $R_p[1/N]$.
The latter ring is equal to $R_p$ as $p \nmid N$ in our 
case under consideration, and Cramer's formula expresses the
coefficients of $f(X)$ as $p$-integral linear combinations
of the $\mathrm{Gal}((C_N \cdot \Qtp) / \Qtp)$-conjugates
of $\eta$, which in turn we assume to be themselves algebraic $p$-integers. Hence
$f(X) \in R_p[X]$, finishing the unramified case.

For the general case, the Kronecker-Weber theorem implies again that 
$\eta \in C_N \cdot \Qtp$ for some $N$, where $C_N := \Q(\mu_N)$ 
is the cyclotomic field of level $N$.
  Writing $N = p^f M$ with $p \nmid M$, let us consider $g(X) \in (C_M \cdot \Qtp)[X]$
the unique polynomial of  degree satisfying
$$
\deg{g} < [C_N\cdot \Qtp : C_M \cdot \Qtp] =  \#(\Z / p^f \Z)^{\times} = p^{f-1}(p-1)
$$
 such that
$\eta = g(\zeta_{p^f} - 1)$, where $\zeta_{p^f} \in \mu_{p^f}$ is a primitive
root of unity of order $p^f$. As $(p) = (\zeta_{p^f} - 1)^{p^{f-1}(p-1)}$
in $O_{C_{p^f}}$, we have 
$$
\mathrm{val}_p(\zeta_{p^f} - 1) = 1 / (p^{f-1}(p-1))
$$
 for the 
$p$-adic
valuation normalized by $\mathrm{val}_p(p) = 1$. Under any embedding of $\Q^{\mathrm{ab}} \cdot \Qtp \hookrightarrow \C_p$
the valuation of $\eta$ is non-negative while the valuations of the coefficients of $g$ are rational integers, and
comparing valuations in the equation $\eta = g(\zeta_{p^f} - 1)$ shows that $g$ has $p$-integral coefficients
in $C_M \cdot \Qtp$. Now $p \nmid M$, and we are reduced to the previous (unramified) case.
\end{proof}

We turn now to the generalization of Lemma~\ref{mod4}, relative over $\Q^{\mathrm{ab}} \cdot \Qtp$. For polynomials over $\Z$, several
different proofs can be found in  Smyth~\cite{smyth}, Arnold~\cite{arnoldeuler},
Zarelua~\cite{zarelua,zareluasurvey}, Vinberg~\cite{vinberg}, Deligne~\cite{deligne}
and Dubickas~\cite{dubickascong}.
Those proofs also apply relatively over a totally $p$-adic field. We explain now how to derive our requisite
$\Q^{\mathrm{ab}} \cdot \Qtp$-relative congruence
from the results in~\cite{vinberg}. Extending the notation of Lemma~\ref{mod4} to the
non-monic case, we define
$$
P(X) = a_0\prod_{i=1}^n (X- \alpha_i) \quad \textrm{and} \quad P_m(X) :=
a_0^{m}\prod_{i=1}^n (X - \alpha_i^m),
$$
so that $P_m \in O_p[X]$ whenever $P \in O_p[X]$.
The following ramification dichotomy is
in the style of Amoroso and Dvornicich~\cite{amorosodvornicich}. We may
call the Galois automorphism $\kappa$ here the \emph{Amoroso-Dvornicich element}.
In the unramified case (i), this element is canonical and coincides with the unique Frobenius element at $p$; compare
to Deligne~\cite{deligne}, of which (i) with $\kappa = \mathrm{Frob}_p$ is a
very  particular case. In the ramified case (ii), however, $\kappa$
is non-canonical and amounts to a choice of a generator
for the cyclic group $\mathrm{Gal}(C_N / C_{N/p}) \cong (\Z / p)^{\times}$.

\begin{lemma} \label{cong}
  For every finite Galois subextension $F / \Q$ of $\Q^{\mathrm{ab}} \cdot \Qtp / \Q$,
  there exists a field automorphism $\kappa \in \mathrm{Gal}(F / \Q)$
  such that the following congruence holds true.

  Let $P(X) \in O_F[X]$ be a polynomial with coefficients belonging to the ring of integers $O_F = F \cap O_p$
   of $F$, with leading coefficient $a_0 \in O_F \setminus \{0\}$, and irreducible over $O_p$.
   \begin{itemize}
   \item[(i)] If $p$ is unramified in $F$, then
  \begin{equation}\label{modph}
    P_{p^2} \equiv P_p^{\kappa} \mod{p^2 \cdot O_p[X]};
  \end{equation}
  \item[(ii)] If $p$ is ramified in $F$, so that $p \mid N$ for the minimal level $N$ of a cyclotomic field
  $C_N := \Q(\exp(2 \pi i / N))$ with $F \subset C_N \cdot \Q^{\mathrm{t}.p}$, then
  \begin{equation} \label{ramified}
 P_{p^2} \equiv P_{p^2}^{\kappa} \mod{p^2 \cdot O_p[X]},
  \end{equation}
  and moreover, if $P_{p^2}^{\kappa}$ and $P_{p^2}$ are constant multiples of one another,
  then there exists a root of unity $\zeta \in \mu_{Np}$
  such that $(a_0\zeta^n)^{-1} P(\zeta X) \in (C_{N/p} \cdot \Qtp)[X]$.
  \end{itemize}
\end{lemma}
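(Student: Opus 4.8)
The plan is to reduce every case to the single-prime Euler--Frobenius congruence for traces of matrix powers, to encode it over the polynomial ring $R_p[T]$ by means of Lemma~\ref{pintegers}, and then to feed in the combinatorial identity of Vinberg~\cite{vinberg}. First I would dispose of the passage from power sums to full polynomials exactly as in Lemma~\ref{mod4}: it suffices to prove the asserted congruences coefficient by coefficient, and, applying the ``$k$-th symmetric power'' construction of that lemma, the $X^{n-k}$ coefficient of $P_m$ equals, up to sign and the factor $a_0^m$, the trace of the $m$-th power of $\Lambda^k(a_0 A)$, where $A$ is the companion matrix of $P/a_0$ --- a matrix over the ring generated by the coefficients of $P$. (The non-monic normalisation brings in only the routine nuisance of clearing the leading coefficient, which I would absorb by passing to the monic polynomial $a_0^{n-1}P(X/a_0)=\prod_i(X-a_0\alpha_i)$ and keeping track of the elementary congruences between the powers of $a_0$ that fall out of the encoding below.) So it is enough to establish, for a square matrix $B$ over $O_F$, a congruence modulo $p^2 O_p$ between $\mathrm{tr}(B^{p^2})$ and a Frobenius twist of $\mathrm{tr}(B^p)$.

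Next I would re-encode the data over $R_p[T]$. Writing $F\subseteq C_N\cdot\Qtp$ with $N$ the minimal level, Lemma~\ref{pintegers} lets one express all coefficients of $P$ --- elements of $O_F$ --- simultaneously as $f_j(\zeta_N)$ with $f_j\in R_p[T]$; thus $P(X)=\mathbf P(\zeta_N,X)$ for a polynomial $\mathbf P(T,X)\in R_p[T][X]$, and since ``raising the roots to the $m$-th power'' commutes with the specialisation $T\mapsto\zeta_N$, likewise $P_m=\mathbf P_m(\zeta_N,\cdot)$. On $R_p[T]$ take the endomorphism $\sigma$ fixing $R_p$ and sending $T\mapsto T^p$. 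Because $p$ splits completely in every finite subextension of $\Qtp/\Q$, one has $x^p\equiv x\pmod{pR_p}$, hence $g(T)^p\equiv g(T^p)\pmod{pR_p[T]}$ for each $g$, i.e. $\sigma(y)\equiv y^p\pmod{pR_p[T]}$ for all $y$. This is exactly the hypothesis under which Vinberg's identity~\cite{vinberg} gives, for any square matrix $B$ over $R_p[T]$, that $\mathrm{tr}(B^{p^2})\equiv\sigma(\mathrm{tr}(B^p))\pmod{p^2 R_p[T]}$; applying it to the matrices $\Lambda^k$ of the companion matrix over $R_p[T]$ and reassembling the coefficients yields
\[
\mathbf P_{p^2}(T,X)\ \equiv\ \mathbf P_p(T^p,X)\pmod{p^2\,R_p[T][X]}.
\]

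Now I would specialise $T=\zeta_N$. If $p\nmid N$, the substitution $\zeta_N\mapsto\zeta_N^p$ on the right is the Frobenius at $p$ restricted to $F$ --- a well-defined automorphism because the decomposition group of $p$ in $\mathrm{Gal}\bigl((\Q^{\mathrm{ab}}\cdot\Qtp)/\Q\bigr)$ is abelian, $p$ being totally split in $\Qtp$ --- so $\mathbf P_p(\zeta_N^p,X)=P_p^{\kappa}$ with $\kappa=\mathrm{Frob}_p$, which is (i). If $p\mid N$, pick $\kappa\in\mathrm{Gal}\bigl((\Q^{\mathrm{ab}}\cdot\Qtp)/\Q\bigr)$ that is trivial on $\Qtp$ and restricts on $C_N$ to a generator of $\mathrm{Gal}(C_N/C_{N/p})$ --- possible since the maximal subextension of $C_N$ unramified at $p$ lies inside $C_{N/p}$ --- so that $\kappa(\zeta_N)=\epsilon\zeta_N$ with $\epsilon$ a root of unity of order dividing $p$. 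Substituting $T=\zeta_N$ and $T=\epsilon\zeta_N$ into the last displayed congruence and using $(\epsilon\zeta_N)^p=\zeta_N^p$ gives $P_{p^2}^{\kappa}\equiv\mathbf P_p(\zeta_N^p,X)\equiv P_{p^2}\pmod{p^2 O_p[X]}$, the first assertion of~(ii).

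The ``moreover'' clause is where I expect the real work. Proportionality of $P_{p^2}^{\kappa}$ and $P_{p^2}$, compared coefficient by coefficient (using $\kappa(a_0^{p^2})=\kappa(a_0)^{p^2}$ for the leading one), shows that $\kappa$ fixes every elementary symmetric function of the $p^2$-th powers of the roots, i.e. an extension $\tilde\kappa\in\mathrm{Gal}(\overline{\Q}/\Q)$ of $\kappa$ permutes the multiset $\{\alpha^{p^2}:P(\alpha)=0\}$. Hence $\tilde\kappa(\alpha_i)=\omega_i\alpha_{\pi(i)}$ with $\omega_i\in\mu_{p^2}$; writing $\alpha_i=\rho_i(\alpha_1)$ with $\rho_i$ in the normal subgroup $\mathrm{Gal}\bigl(\overline{\Q}/(\Q^{\mathrm{ab}}\cdot\Qtp)\bigr)$ and conjugating by $\tilde\kappa$, one finds that a single root of unity $\omega\in\mu_{p^2}\subset\Q^{\mathrm{ab}}$ works for all $i$ after re-indexing the roots --- that is, $\tilde\kappa$ acts on the roots of $P$ by multiplication by the fixed $\omega$ composed with a permutation, equivalently $P^{\kappa}(X)=\bigl(\kappa(a_0)/a_0\bigr)\omega^{n}P(\omega^{-1}X)$. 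This last identity pins $\omega$ down to a controlled cyclotomic level; I would then solve $\kappa(\zeta)/\zeta=\omega$ for a root of unity $\zeta\in\mu_{Np}$ via Hilbert~90 for the cyclic group $\langle\kappa\rangle$ acting on $\mu_{Np}$ --- the requisite norm vanishing emerging from iterating $\tilde\kappa$ and exploiting that it has finite order on the splitting field of $P$ --- and conclude that $(a_0\zeta^n)^{-1}P(\zeta X)=\prod_i(X-\alpha_i/\zeta)$ is $\kappa$-invariant and therefore descends to $(C_{N/p}\cdot\Qtp)[X]$. The genuine obstacle is this descent bookkeeping: reconciling the cyclotomic level of $\omega$ (and of the twisting $\zeta$) with the field $C_{N/p}\cdot\Qtp$, in particular handling $p^2\mid N$ versus $p\,\|\,N$ and the degenerate, sparsely-supported polynomials, is where the argument must be run with care; the congruences (i) and the first half of (ii) are otherwise, modulo the $R_p[T]$-encoding, a direct transcription of the classical $p$-adic argument.
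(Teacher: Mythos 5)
Your treatment of the congruences is essentially the paper's argument in a different packaging: the paper also reduces to power sums by the symmetric-power trick, also encodes coefficients via Lemma~\ref{pintegers} as $f(\zeta_N)$ with $f\in R_p[X]$ and uses the Fermat congruence $f(X)^p\equiv f(X^p)\bmod pR_p[X]$, and then gets the mod~$p^2$ statement from Vinberg's multinomial congruences (or directly from the Deligne--Zarelua trace identity, which is what your ``Vinberg's identity for $\mathrm{tr}(B^{p^2})\equiv\sigma(\mathrm{tr}(B^p))$'' really is); your universal congruence $\mathbf P_{p^2}(T,X)\equiv\mathbf P_p(T^p,X)\bmod p^2R_p[T][X]$ followed by specialization at $T=\zeta_N$ (resp.\ the two specializations $T=\zeta_N$, $T=\epsilon\zeta_N$ with $\epsilon^p=1$) is a clean and correct way to run the same computation, modulo the non-monic $a_0$-bookkeeping that both you and the paper treat briskly.

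The genuine gap is in the ``moreover'' clause, and it sits exactly where you flagged it. With your normalization of $\kappa$ (only a generator of $\mathrm{Gal}(C_N/C_{N/p})$, trivial on $\Qtp$, extension to $\mu_{Np}$ unspecified), neither of the two steps you need is available: (1) solvability of $\kappa(\zeta)/\zeta=\omega$ with $\zeta\in\mu_{Np}$ and $\omega$ possibly of exact order $p^2$ is \emph{not} given by Hilbert~90 --- $H^1(\langle\kappa\rangle,\mu_{Np})$ does not vanish for a finite module, and whether $\omega$ lies in the image of $\zeta\mapsto\kappa(\zeta)/\zeta$ depends on how $\kappa$ acts on $\mu_{Np}$, which you never pinned down; (2) even granting a solution, $\kappa$-invariance of $(a_0\zeta^n)^{-1}P(\zeta X)$ only yields coefficients in the fixed field of $\kappa$ inside $C_{Np}\cdot\Qtp$, which equals $C_{N/p}\cdot\Qtp$ only if $\kappa$ generates $\mathrm{Gal}(C_{Np}/C_{N/p})$; when $p$ exactly divides $N$ that group has order $p(p-1)$, and an extension of a generator of $\mathrm{Gal}(C_N/C_{N/p})\cong(\Z/p)^\times$ can have order only $p-1$ (a Teichm\"uller-type lift), whose fixed field picks up the degree-$p$ subfield of $C_{p^2}$, so the asserted descent would fail. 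The paper removes both problems by \emph{engineering} $\kappa$ at level $Np$ from the outset: $\kappa(\zeta_{Np})=\lambda\zeta_{Np}$ with $\lambda\in\mu_{p^2}$ of exact order $p^2$, chosen to generate $\mathrm{Gal}(C_{Np}/C_{N/p})$. Then for the single root of unity $\xi=\lambda^r\in\mu_{p^2}$ produced by \emph{one} root $\alpha$ (proportionality gives $\xi\alpha$ a root of $P^\kappa$), the required twist is written down explicitly as $\zeta:=\zeta_{Np}^r$, with $\kappa(\zeta)=\xi\zeta$ by construction; and irreducibility over $O_p$ upgrades the common root $\zeta^{-1}\alpha$ of $P(\zeta X)$ and $P(\zeta X)^\kappa$ directly to equality of the normalized polynomials, so no global identity $P^\kappa(X)=\mathrm{const}\cdot\omega^nP(\omega^{-1}X)$ and no cohomological solvability are needed. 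Your argument is repaired by adopting this explicit $\kappa$ (note $\kappa(\zeta_N)=\lambda^p\zeta_N$ with $\lambda^p\in\mu_p$, so your congruence derivation still applies verbatim), but as written the Hilbert~90 step and the final descent are unjustified.
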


\begin{proof}
Let us first consider the case that $p$ is unramified in $F$.
Thus there exists a cyclotomic field $C_N$ with $p \nmid N$, such that $P(X) \in (C_N \cdot \Q^{\mathrm{t.p}})[X]$. In this case
we define $\kappa|_{C_N} := \mathrm{Frob}_p = \langle p \rangle$ to be the Frobenius element of $\mathrm{Gal}(C_N / \Q) \cong (\Z / N)^{\times}$,
raising the roots of unity $\zeta_N \mapsto \zeta_N^p$ to the $p$-th power. We define $\kappa \in \mathrm{Gal}(F/\Q)$ by extending $\kappa$ to be the
identity on the totally $p$-adic field. In other words $\kappa \in \mathrm{Gal}(F/\Q)$ is the unique member of the Frobenius conjugacy class at $p$ of the finite Galois extension $F/\Q$ unramified over $p$; this Frobenius conjugacy class is central (a singleton) by our assumption that $F$ is contained in the composite of an abelian and totally $p$-adic Galois extensions of $\Q$.  We verify the congruence (\ref{modph}) for this Galois element $\kappa$.

As in the proof of Lemma~\ref{mod4}, the $k$-th symmetric powers trick
reduces our task to verifying the mod $p^2$ congruence
  between two elements of the integer ring $O_p$ of $\Q^{\mathrm{ab}} \cdot \Qtp$:
  \begin{equation} \label{free}
  \sum_{i=1}^n \alpha_i^{p^2} \equiv \kappa \Big( \sum_{i=1}^n \alpha_i^p \Big) \mod{p^2 \cdot O_p},
  \end{equation}
  provided that all the elementary symmetric functions in the $\alpha_i$ are in~$O_F$.
 This congruence is a special case of Deligne's identity~\cite{deligne} applied to $n = 2$ (of \emph{loc. cit.}) and
 $R = O_p / \mathfrak{a}$ ranging over all characteristic $p$ quotient rings of $O_p$.

  Let us indicate also how to derive (\ref{free}) from the results in Vinberg~\cite{vinberg}.
We start with the basic Frobenius congruence
\begin{equation} \label{frob}
 \Big( \sum_{i=1}^n \alpha_i \Big)^p \equiv \kappa \Big( \sum_{i=1}^n \alpha_i \Big) \mod {p \cdot O_p},
\end{equation}
defining our choice of the Galois element $\kappa \in \mathrm{Gal}(F/\Q)$.
Raising it to the power $p$, using that $(c + pA)^p \equiv c^p \mod{p^2\cdot \Z[A]}$, we get:
\begin{equation}\label{modp2}
\Big( \sum_{i=1}^n \alpha_i \Big)^{p^2} \equiv \kappa  \Big(   \sum_{i=1}^n \alpha_i \Big)^p \mod{p^2 \cdot O_p}.
\end{equation}
The  congruence (\ref{free}) follows now upon expanding the multinomials in (\ref{modp2}) and using
Lemma~1 and Theorem~2 (on the multinomial coefficients) in Vinberg~\cite{vinberg}. See also the congruence (9) of {\it loc. cit.} to cover the cases $p = 2$ and $p = 3$. This concludes the proof of the unramified case and the congruence~(\ref{modph}).

Consider now the case that $p$ ramifies in $F$, so that $p \mid N$ for the minimal level $N$ of a cyclotomic field $C_N$
having $F \subset C_N \cdot \Q^{\mathrm{t.}p}$. In this case, following the lead from Amoroso and Dvornicich~\cite{amorosodvornicich},
we firstly define $\kappa \in \mathrm{Gal}(C_{Np} / \Q)$ to be a generator for the cyclic Galois group $\mathrm{Gal}(C_{Np} / C_{N / p})$, and extend it by the identity on the totally $p$-adic field. Concretely, we pick choices $\zeta_{Np} \in \mu_{Np}$ and $\lambda \in \mu_{p^2}$ of primitive roots of unity of respective orders $Np$ and $p^2$,
and define $\kappa(\zeta_{pN}) := \lambda \zeta_{pN}$ and
\begin{equation} \label{fermat}
\kappa\big( \sum_{j = 0}^{Np-1}  a_j \zeta_{Np}^j \big) := \sum_{j = 0}^{Np-1} a_j \lambda^j \zeta_{Np}^j \textrm{ for } a_j \in \Q^{\mathrm{t.}p}.
\end{equation}
This restricts as a field automorphism of $F/\Q$. If the polynomials $P_{p^2}$ and $P_{p^2}^{\kappa}$
are scalar-proportional, then for any root $\alpha = \alpha_i$ of $P$ there exists a root of unity $\xi = \lambda^r \in \mu_{p^2}$ of order dividing $p^2$ and such that $\xi \alpha$
is among the roots of $P^{\kappa}$. Choosing $\zeta := \zeta_{Np}^{r} \in \mu_{Np}$, we have $\kappa(\zeta) = \lambda^{r} \zeta = \xi \zeta$,
and hence $\zeta^{-1} \alpha$ is a root of both $P(\zeta X)$ and $P(\zeta X)^{\kappa}$. As we assume that $P(X)$,
and hence (equivalently) $P(\zeta X) \in O_p[X]$, is irreducible over $O_p$, it follows
that the polynomials $(a_0\zeta^n)^{-1} P(\zeta X) = ( (a_0\zeta^n)^{-1} P(\zeta X) )^{\kappa}$.
This means that $(a_0\zeta^n)^{-1} P(\zeta X) \in (C_{N/p} \cdot \Qtp)[X]$.

Let us verify the congruence~(\ref{ramified}) for this Galois element $\kappa \in \mathrm{Gal}(F/\Q)$. By the $k$-th symmetric
power trick, we are once again reduced to establishing
  \begin{equation} \label{freebis}
  \sum_{i=1}^n \alpha_i^{p^2} \equiv \kappa \Big( \sum_{i=1}^n \alpha_i^{p^2} \Big) \mod{p^2 \cdot O_p}
  \end{equation}
  whenever the elementary symmetric functions of the $\alpha_i$ belong to $O_F$. We start as before with the more 
  basic mod $p$ congruence
  \begin{equation}\label{basicmodp}
    \Big( \sum_{i=1}^n \alpha_i \Big)^p \equiv  \kappa \Big( \sum_{i=1}^n \alpha_i \Big)^p,
  \end{equation}
  which we prove (cf.~the argument for Lemma~2 (2) in~\cite{amorosodvornicich}) by
  writing out $\eta := \sum_{i=1}^n \alpha_i = f(\zeta_N)$ via Lemma~\ref{pintegers}, with $f(X) \in R_p[X]$, 
 and specializing the Fermat congruence 
  $f(X)^p \equiv f(X^p) \mod{p \cdot R_p[X]}$  to $X = \zeta_N$. This gives the first congruence
  as $\kappa(\eta^p) = \kappa(f(\zeta_N)^p) \equiv \kappa(f(\zeta_N^p)) \equiv f(\kappa(\zeta_N^p)) =
  f(\zeta_N^p) \equiv f(\zeta_N)^p = \eta^p \mod{p \cdot R_p}$ by the definition (\ref{fermat}) of $\kappa$.
  
  From this, noting $(c + pA)^p \equiv c^p \mod{p^2 \cdot \Z[A]}$, we similarly obtain
  \begin{equation}\label{raised}
    \Big( \sum_{i=1}^n \alpha_i \Big)^{p^2} \equiv \kappa \Big( \sum_{i=1}^n \alpha_i \Big)^{p^2} \mod{p^2 \cdot O_p}.
  \end{equation}
  The result~(\ref{freebis}) now similarly follows from Vinberg's multinomial coefficients congruences~\cite{vinberg}.
\end{proof}

As a consequence of this congruence, we derive an integrality result for the Taylor
series of the algebraic functions of the form $\sqrt[p]{P_p^{p-1} P_{p^2}^{\kappa}}$, respectively $\sqrt[p]{P_{p^2}^{p-1} P_{p^2}^{\kappa}}$.
 This property
 is the
centerpiece of the method we propose in the present paper.

\begin{propo} \label{integrality}
Continuing with the notation from Lemma~\ref{cong}, let $P(X) = \sum_{i=0}^n c_i X^i \in O_F[X]$.
For $v \in M_F^{\mathrm{fin}}$ a non-Archimedean valuation of $F$, consider the absolute
value $|\cdot|_v$ normalized so that $|\ell|_v = \ell^{-1}$ for the unique rational prime $\ell$
dividing $v$.  Then the  $X = 0$ Taylor series expansion of
  \begin{equation} \label{intseries}
  \sqrt[p]{P_p( X)^{p-1}P_{p^2}^{\kappa}( X)}
  \end{equation}
  in case (i) is convergent on the $v$-adic disk $|X|_v < \big( |c_0|_v / \max_{i=0}^n |c_i|_v \big)^{p^2}$,
   and the $X = 0$ Taylor series expansion of
    \begin{equation} \label{intseriesram}
  \sqrt[p]{P_{p^2}(  X)^{p-1}P_{p^2}^{\kappa}( X)}
  \end{equation}
  in case (ii) is convergent on the $v$-adic disk $|X|_v < \big( |a_0|_v / \max_{i=0}^n |a_i|_v \big)^{p^3}$.

\end{propo}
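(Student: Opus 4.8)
The plan is to reduce the $p$-th root integrality assertion to the mod $p^2$ congruences of Lemma~\ref{cong} via the same elementary mechanism that produced Proposition~\ref{squarerad} from Lemma~\ref{mod4}, only now replacing the square root and the binomial series for $\sqrt{1+4Y}$ by a $p$-th root and the binomial series for $\sqrt[p]{1 + p^2 Y}$. The first step is the purely formal observation that
$$
\sqrt[p]{1 + p^2 Y} \;=\; \sum_{k=0}^{\infty} \binom{1/p}{k} p^{2k} Y^k
$$
has all its coefficients in $\Z_p$ (more precisely, the $v$-adic valuation of $\binom{1/p}{k} p^{2k}$ stays bounded below, in fact tends to $+\infty$, because the denominators of $\binom{1/p}{k}$ are powers of $p$ of size at most $p^{\lfloor k/(p-1)\rfloor} \cdot (\text{something tame})$, comfortably beaten by $p^{2k}$); this is the analogue of~(\ref{2int}). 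One should record the precise radius of convergence this gives when $Y$ is a power series whose coefficients have controlled denominators — that is where the exponents $p^2$ and $p^3$ in the statement will come from.

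The second step is to exhibit the radicand as a $p$-adic unit power series times $(1 + p^2 \cdot(\text{integral}))$. In case (i), Lemma~\ref{cong}(i) gives $P_{p^2} \equiv P_p^\kappa \bmod p^2 O_p[X]$; therefore
$$
P_p(X)^{p-1} P_{p^2}^\kappa(X) \;=\; P_p(X)^{p-1}\bigl(P_{p^2}(X) + p^2 W(X)\bigr)
\;=\; P_p(X)^p\Bigl(1 + p^2 \tfrac{W(X)}{P_p(X)}\Bigr)
$$
for some $W \in O_p[X]$, where I use that $P_{p^2}$ and $P_p^\kappa$ differ only by the $p^2$-multiple term (both being the monicized-up-to-leading-coefficient transforms, $P_p = P_p^\kappa$ up to the congruence after applying $\kappa$ — here one must be slightly careful that $\kappa$ fixes $p$ and commutes with reduction mod $p^2$, which it does since $\kappa$ is a ring automorphism of $O_p$). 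Then $\sqrt[p]{P_p^{p-1}P_{p^2}^\kappa} = P_p \cdot \sqrt[p]{1 + p^2 W/P_p}$, and the first step applies with $Y = W(X)/P_p(X)$. The size of $P_p(0)$ versus the other coefficients of $P_p$ controls how small $|X|_v$ must be for $W/P_p$ to have $v$-integral coefficients, and then one raises that to the $p^2$-th power because of the $p^{2k}Y^k$ in the binomial series — this yields exactly the disk $|X|_v < (|c_0|_v/\max_i|c_i|_v)^{p^2}$ once one checks $P_p(0)^2$ (resp.\ the relevant power) divides the comparison. Case (ii) is identical but uses Lemma~\ref{cong}(ii), $P_{p^2}\equiv P_{p^2}^\kappa \bmod p^2 O_p[X]$, writing $P_{p^2}^{p-1}P_{p^2}^\kappa = P_{p^2}^p(1 + p^2 W/P_{p^2})$; the extra factor $p$ in the exponent ($p^3$ rather than $p^2$) reflects that $P_{p^2}$ has coefficients which are $p^2$-th power transforms of those of $P$, so $\max_i|c_i^{(p^2)}|_v$ relates to $\max_i|a_i|_v$ with an extra power, and one must track this bookkeeping to land on $(|a_0|_v/\max_i|a_i|_v)^{p^3}$.

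The routine but slightly delicate heart of the argument is the denominator estimate on $\binom{1/p}{k}p^{2k}$ together with the convergence bookkeeping: one needs $v(W(X)/P_p(X))$ coefficientwise bounded below by a positive multiple of $\mathrm{ord}_X$, which forces the unit-normalization $P_p/P_p(0)$ and hence the $|c_0|_v/\max|c_i|_v$ ratio, and then the binomial series pushes the exponent up by the factor $p^2$ (resp.\ $p^3$). The main obstacle I anticipate is purely combinatorial-arithmetic rather than conceptual: getting the exact exponents right, i.e.\ verifying that the naive bound $v_p\binom{1/p}{k} \geq -k/(p-1)$ combined with $+2k$ from $p^{2k}$ is genuinely enough (it is, since $2 - 1/(p-1) \geq 1$ for all primes $p$, with the worst case $p=2$ giving exactly $1$), and then propagating the coefficient-denominator control of $W/P_p$ through the substitution into the binomial series without losing a factor. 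Once the binomial identity and its $p$-adic integrality are in hand, the rest is a direct substitution mirroring Proposition~\ref{squarerad}.
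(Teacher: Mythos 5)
Your skeleton is the same as the paper's: the integrality of the binomial series $\sqrt[p]{1+p^2Y}=\sum_k\binom{1/p}{k}p^{2k}Y^k$ (the paper's (\ref{binom})), the congruence of Lemma~\ref{cong} to exhibit the radicand as a perfect $p$-th power plus $p^2\cdot(\text{integral})$, and a rescaling/denominator bookkeeping to extract the convergence radii, exactly as Proposition~\ref{squarerad} was extracted from Lemma~\ref{mod4}. But the two steps you leave as ``bookkeeping'' are precisely where the quantitative content of the proposition lives, and your sketch of them is incorrect. First, your case (i) reduction is false as displayed: you invoke $P_{p^2}^{\kappa}\equiv P_{p^2}\bmod p^2$ (that is the case (ii) congruence, not case (i)) and then silently use $P_p^{p-1}P_{p^2}=P_p^p$, which would force $P_{p^2}=P_p$. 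The correct reduction is $P_{p^e}^{p-1}P_{p^2}^{\kappa}=P_{p^e}^{p}+p^{2}H$ with $H\in O_F[X]$ (case (i) with $e=1$ via $P_{p^2}\equiv P_p^{\kappa}$, case (ii) with $e=2$), so that the substitution into the binomial series is $Y=H/P_{p^e}^{p}$, with $p$ copies of $P_{p^e}$ in the denominator.

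Second, and more seriously, the exponents $p^{2}$ and $p^{3}$ do not come from the factor $p^{2k}$ in the binomial series. That factor serves only to offset $\mathrm{val}_p\binom{1/p}{k}=-k-\mathrm{val}_p(k!)\ge-2k$ (note your stated bound $\mathrm{val}_p\binom{1/p}{k}\ge-k/(p-1)$ is wrong, e.g. $p=2$, $k=2$, though the integrality conclusion survives), and it is irrelevant at the places $v\nmid p$, where the proposition must also hold. The radii come from the geometric-series control of $P_{p^e}(\lambda X)^{-p}$: by Gauss's lemma $|P_{p^e}|_v=|P|_v^{p^e}$ and $|P_{p^e}(0)|_v=|P(0)|_v^{p^e}$, each reciprocal factor of $P_{p^e}$ costs a rescaling by $(|c_0|_v/\max_i|c_i|_v)^{p^e}$, and the $p$-th power in $Y=H/P_{p^e}^{p}$ multiplies this by $p$, giving $p^{e+1}$, i.e.\ $p^{2}$ in case (i) and $p^{3}$ in case (ii); the rescaling factor itself is also what absorbs the non-integral constant prefactor $P_{p^e}(0)^{-p}$. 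Your accounting (a single $P_p$, resp.\ $P_{p^2}$, downstairs, with the exponent then inflated ``because of the $p^{2k}Y^k$'') does not produce the stated disks — in case (i) it would give $p^3$ rather than $p^2$, and it gives nothing at $v\nmid p$. Once these two points are repaired, the argument becomes the paper's proof; so the plan is the right one, but as written the key estimates would not go through.
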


\begin{proof}
  Observe that, by expanding Newton's binomial,
  \begin{equation} \label{binom}
  (1 + p^2 Y)^{1/p} = \sum_{j \geq 0} \binom{1/p}{j}p^{2j} Y^j \in \Z[[Y]],
  \end{equation}
  where the $p$-adic valuation in $\binom{1/p}{j}$ is $-j - \mathrm{val}_p(j!) \geq -2j$.
  By Lemma~\ref{cong}, the polynomial $P_{p^e}(X)^{p-1}P_{p^2}(X)^{\kappa}$
  is congruent mod $p^2$ to the perfect $p$-th power $P_{p^e}(X)^{p}$, where $e = 1$ in case (i)
  and $e = 2$ in case (ii). This
  means that there is an $H \in X O_F[[X]]$ such that
  $P_{p^e}(X)^{p-1}P_{p^2}^{\kappa}(X)  = P_{p^e}(X)^p + p^2 H(X)$.

 Let $\C_v$ denote the completion of an algebraic closure of the field $F_v$, with
 $O_{\C_v} := \{ z \in \C_v \mid |z|_v \leq 1 \}$ its valuation ring.
   For
   any $c \in \C_v \setminus \{0\}$, by our normalization that $|\ell|_v = 1/\ell$ for
   the unique rational prime $\ell \mid v$,
    we  have $|c|_v \in \C_v^{\times}$ with $| |c|_v |_v = 1 / |c|_v$.
     For any polynomial $Q(X) = \sum_{i=0}^n b_i X^i \in O_{\C_v}[X]$ having $b_0 = Q(0) \neq 0$, we have by a geometric series expansion:
 $$
  b_0 |Q|_v \big/  Q\big( b_0 |Q|_v X \big) \in O_{\C_v}[[X]],
  $$
  where $|Q|_v := \max_{i=0}^n |b_i|_v$ is the Gauss norm.

  Our statement being trivial when $c_0 = 0$, apply the latter property to
  the polynomial $Q(X) := P_{p^e}( X) \in O_{\C_v}[X]$, having $Q(0) = c_0^{p^e} \in
  \{c_0^p, c_0^{p^2}\}$. We get an integral expansion
  $$
  c_0^{p^{e+1}} |P|_v^{p^{e+1}} \cdot P_{p^e}(c_0^{p^{e}} |P|_v^{p^e} \cdot X)^{-p} \in O_{\C_v}[[X]].
  $$
  Here, we have used Gauss's lemma (cf. Bombieri-Gubler, Lemma~1.6.3), yielding
  $|P_{p^e}|_v = |P|_v^{p^e}$.
  \emph{A fortiori}, as $c_0 |P|_v \in O_{\C_v}$, it follows
  \begin{equation} \label{inner}
  c_0^{p^{e+1}}  |P|_v^{p^{e+1}} \cdot P_{p^e}(c_0^{p^{e+1}} |P|_v^{p^{e+1}} \cdot X)^{-p} \in O_{\C_v}[[X]].
  \end{equation}
  Thus
  \begin{eqnarray*}
  \sqrt[p]{P_{p^e}(c_0^{p^{e+1}}|P|_v^{p^{e+1}} X)^{p-1}P_{p^2}^{\kappa}(c_0^{p^{e+1}}|P|_v^{p^{e+1}}  X)}\\
   = P_{p^e}(c_0^{p^{e+1}}|P|_v^{p^{e+1}}  X) \sqrt[p]{ 1 + p^2 H(c_0^{p^{e+1}}|P|_v^{p^{e+1}}  X)P_{p^e}(c_0^{p^{e+1}}|P|_v^{p^{e+1}}  X)^{-p} }  \\ \in O_{\C_v}[[X]],
  \end{eqnarray*}
  upon applying (\ref{binom}) to the substitution
  $$
  Y(X) = H(c_0^{p^{e+1}}|P|_v^{p^{e+1}} \cdot X) \big/ P_{p^e}( c_0^{p^{e+1}} |P|_v^{e+1} \cdot X)^p \in XO_{\C_v}[[X]].
  $$
  The integrality of the last expansion comes from (\ref{inner}) on noting that $H(X) \in X O_F[[X]]$.

  The result follows now from the integrality of this expansion and
   $\big|c_0^{p^{e+1}} |P|_v^{e+1} \big|_v = ( |c_0|_v / |P|_v )^{p^{e+1}} = ( |c_0|_v / \max_{i=0}^n |c_i|_v )^{p^{e+1}}$.
\end{proof}

The following is the generalization of Lemma~\ref{square}. It handles the ramified case (i); here and in the sequel, $\kappa$ is the Amoroso-Dvornicich element from Lemma~\ref{cong}. As $\mu_{\infty} \subset O_p$,
the cyclotomics split in linear factors over $O_p$, hence they are excluded
as soon as we suppose our polynomial $P \in O_p[X]$ is irreducible and non-linear.

\begin{lemma} \label{prad}
  Suppose that the polynomial $P(X) \in O_p[X]$
 is irreducible over $O_p$, of degree $> 1$, and with leading coefficient $a_0$.
 Suppose also that the polynomial $P_p(X)$ is a perfect $p$-th power in $O_p[X]$.
  Then the following are equivalent:
  \begin{itemize}
    \item[(i)] $a_0^{-p^2}P_{p^2} = a_0^{-p}P_p$;
    \item[(ii)] $a_0^{-p^2}P_{p^2} = \eta(a_0^{-p}) P_p^{\eta}$ for some $\eta \in \mathrm{Gal}(F/\Q)$;
    \item[(iii)] the polynomial $P_p(X)^{p-1}P_{p^2}^{\kappa}(X)$ is a perfect $p$-th power.
    \item[(iv)] the function $\sqrt[p]{P_p(X)^{p-1}P_{p^2}^{\kappa}(X)}$ is rational.
    \end{itemize}
  \end{lemma}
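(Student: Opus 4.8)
The plan is to establish the cycle $(\mathrm{i})\Rightarrow(\mathrm{ii})\Rightarrow(\mathrm{iii})\Rightarrow(\mathrm{iv})\Rightarrow(\mathrm{i})$, after first drawing out the structural consequence of the two standing hypotheses. Since $P$ is irreducible over $O_p$, hence over the field $\Q^{\mathrm{ab}}\cdot\Qtp =: K$, its Galois group acts transitively on the roots $\alpha_1,\dots,\alpha_n$. If $P_p$ is a perfect $p$-th power then, comparing factorizations over $\C$, each value of $\alpha_i^p$ occurs with multiplicity divisible by $p$; but the fibre of $\alpha\mapsto\alpha^p$ on the roots lies in a single $\mu_p$-coset, so has at most $p$ elements, and transitivity forces all fibres to have the same size --- hence every fibre is a full $\mu_p$-coset. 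Thus the root set is $\mu_p$-stable and
$$
P(X)=a_0\,R(X^p),
$$
where $R$ is monic of degree $n/p$, irreducible over $K$, with roots the distinct values $v_k=\alpha_i^p$; moreover $v_1\notin K(v_1)^p$ (this is exactly the irreducibility of $X^p-v_1$, equivalently of $P$). Writing $R_p(Y):=\prod_k(Y-v_k^p)$ for the ``$p$-th power'' operation applied to $R$, one reads off at once
$$
a_0^{-p}P_p=R^p,\qquad a_0^{-p^2}P_{p^2}=R_p^{\,p},\qquad (R_p)^\kappa=(R^\kappa)_p ,
$$
the last because $R\mapsto R_p$ is a polynomial operation on coefficients.

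With these identities $(\mathrm{i})\Leftrightarrow(\mathrm{ii})$ is handled in the spirit of Lemma~\ref{square}. The implication $(\mathrm{i})\Rightarrow(\mathrm{ii})$ is the choice $\eta=\mathrm{id}$. Conversely, cancelling leading coefficients and $p$-th powers, condition $(\mathrm{ii})$ says exactly $R_p=R^\eta$ for some $\eta\in\mathrm{Gal}(F/\Q)$; applying $\eta$ repeatedly gives $R_{p^{k}}=R^{\eta^{k}}$, so with $k=\mathrm{ord}(\eta)$ we get $R_{p^{k}}=R$, i.e.\ $v\mapsto v^{p^{k}}$ permutes the finite set $\{v_k\}$ and each $v_k$ is a root of unity. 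But every root of unity is a $p$-th power inside $K$ (as $\mu_\infty\subset K$ is $p$-divisible), so $v_1\in K(v_1)^p$, contradicting the irreducibility of $P$. Hence neither $(\mathrm{ii})$ nor the stronger $(\mathrm{i})$ can hold, and in particular they are equivalent.

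For $(\mathrm{iii})\Leftrightarrow(\mathrm{iv})$: a perfect $p$-th power has a polynomial, hence rational, $p$-th root. Conversely a rational $p$-th root $g$ of $P_p^{p-1}P_{p^2}^\kappa$ is integral over $O_p[X]$ (it satisfies $T^p-P_p^{p-1}P_{p^2}^\kappa=0$) and lies in the fraction field of $O_p[X]$; since $O_p$ is integrally closed, so is $O_p[X]$, whence $g\in O_p[X]$ and $(\mathrm{iii})$ follows.

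The link $(\mathrm{iii})\Leftrightarrow(\mathrm{i})$ is where the real work lies. From the structure above, $P_p^{p-1}P_{p^2}^\kappa=\big(a_0^{p-1}\kappa(a_0)^p\,R^{p-1}(R^\kappa)_p\big)^p$, so $(\mathrm{iii})$ forces $P_{p^2}$ and $P_{p^2}^\kappa$ to be perfect $p$-th powers --- but this part is automatic here, so the substance must come from the $p$-adic congruence. The plan is to feed in Lemma~\ref{cong}(i), $P_{p^2}\equiv P_p^\kappa\pmod{p^2 O_p[X]}$: writing both sides as $p$-th powers and reducing modulo $p$, one uses that $p$ is unramified in $F$ (so $O_F/pO_F$ is reduced) together with the Frobenius identity $(A-B)^p\equiv A^p-B^p$ to conclude that the bases agree modulo $p$, i.e.\ $R_p\equiv R^\kappa\pmod{p\,O_F[X]}$; then one iterates this congruence --- applying Lemma~\ref{cong}(i) in turn to $R$, $R_p,\dots$ --- with the goal of upgrading it to the exact identity $R_p=R$, which is $(\mathrm{i})$. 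I expect the bootstrapping from the single mod-$p$ congruence to a genuine equality (equivalently, deducing directly from $(\mathrm{iv})$ that the $v_k$ are roots of unity and hence reaching the same contradiction with irreducibility that closes the cycle) to be the main obstacle: it is here that the unramifiedness of $p$ and the full force of the Amoroso--Dvornicich/Deligne congruence, rather than its mod-$p^2$ shadow alone, have to be exploited.
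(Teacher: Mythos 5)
Your proposal does not close the cycle: the implication $(\mathrm{iii})\Rightarrow(\mathrm{i})$ is exactly where you stop, conceding it is ``the main obstacle'' and offering only a plan to bootstrap the congruence of Lemma~\ref{cong}(i) into an exact identity. That plan cannot succeed, and your own intermediate computations show why. Reading the hypothesis literally ($P_p$ \emph{is} a perfect $p$-th power), you correctly derive $P(X)=a_0R(X^p)$, whence $P_p=(a_0R)^p$ and $P_{p^2}=(a_0^{p}R_p)^p$, so that (iii) and (iv) hold \emph{automatically}; while your root-of-unity/height argument shows (i) and (ii) are \emph{impossible} (they would force the roots of $R$ to be roots of unity, splitting $P$ over $O_p\supset\mu_\infty$ against irreducibility in degree $>1$). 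Under that literal reading the four conditions are therefore not equivalent, and no congruence input can bridge them. The resolution is that the hypothesis is mis-stated: exactly as in the parallel Lemma~\ref{square} (``$P_2(X)$ is not a perfect square''), the intended assumption --- the one the paper's own proof uses (``we assume the polynomial $P_p(X)$ to be irreducible over $F$'') and the one consistent with how the lemma is invoked in the proof of Theorem~\ref{sz} --- is that $P_p$ is \emph{not} a perfect $p$-th power.

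Under the intended hypothesis the paper's proof is a single paragraph, and it is precisely the mechanism you used for $(\mathrm{i})\Leftrightarrow(\mathrm{ii})$, applied instead to (ii)/(iii)/(iv): if $P_p^{p-1}P_{p^2}^{\eta}$ is a perfect $p$-th power for some $\eta\in\mathrm{Gal}(F/\Q)$ while $P_p$ is not, then for a root $\alpha$ of $P$ the value $\alpha^{p}$ must occur among the roots $\sigma(\alpha)^{p^2}$ of a Galois conjugate of $P_{p^2}$; iterating, $\alpha^{p}=\sigma^{k}(\alpha)^{p^{k+1}}=\alpha^{p^{k+1}}$ (or, by heights, $p\,h(\alpha)=p^{2}h(\alpha)$), so $\alpha\in\mu_\infty$, contradicting irreducibility over $O_p\supset\mu_\infty$ in degree $>1$. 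Hence (ii), (iii), (iv) are all impossible, (i) is impossible because it implies (ii), and the four statements are (vacuously) equivalent; your integral-closure remark for $(\mathrm{iv})\Leftrightarrow(\mathrm{iii})$ is fine but peripheral. In particular no appeal to Lemma~\ref{cong}, to unramifiedness of $p$, or to any mod-$p^2$ congruence is needed in this lemma at all. In short: you rediscovered the paper's key argument, but attached it to the wrong pair of conditions, took the (typo-ridden) hypothesis at face value, and left the decisive implication unproved --- indeed unprovable in the form you set up.
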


\begin{proof}
It is essentially the same as the proof of Lemma~\ref{square}. Suppose that $P_p(X)^{p-1}P_{p^2}^{\eta}(X)$ is a perfect $p$-th power for some
$\eta \in \mathrm{Gal}(F/\Q)$,
and take any root $\alpha$ of our irreducible polynomial $P(X)$. Since we assume the polynomial $P_p(X)$
to be irreducible over $F$,  $\alpha^p$ has to be among the roots $\sigma(\alpha)^{p^2}$
of $N_{F/\Q}(P_{p^2}(X))$, with $\sigma$ ranging over $\mathrm{Gal}(F/\Q)$. Letting $k$ the order of $\sigma$, we get
by a $k$-fold iteration:
\begin{eqnarray*}
\alpha^p = \sigma(\alpha^p)^{p} = \sigma^2(\alpha)^{p^3} = \sigma^2( \alpha^p)^{p^2}
= \cdots = \sigma^k(\alpha)^{p^{k+1}} = \alpha^{p^{k+1}}.
\end{eqnarray*}
Hence $\alpha$ is a root of unity.

An alternative argument, based on the theory of the Weil height, is to note that $\alpha^p = \sigma(\alpha^{p^2})$ implies
that $ph(\alpha) = h(\alpha^p) = h(\alpha^{p^2}) = p^2 h(\alpha)$, hence $h(\alpha) = 0$, and once again $\alpha$
is a root of unity.
\end{proof}

For the ramified case (ii), we will need to analyze when $P_{p^2}$
is a perfect $p$-th power.
 Two basic examples where this occurs are
  $P(X) = X^{p^2} - 2$, with $P_p(X) = (X^p - 2)^p$ and $P_{p^2}(X) = (X-2)^{p^2}$,
and $P(X) = X^p - 2$, with $P_p(X) = (X-2)^p$ and $P_{p^2}(X) = (X-2^p)^p$.
In both, already $P_p$ is a perfect $p$-th power.
Our next lemma shows that this feature is general, when $P$ is irreducible over $O_p$.

\begin{lemma}  \label{already}
 If $P \in O_p[X]$ is irreducible (over $O_p$) and $P_{p^2}$ is a perfect $p$-th power, then already $P_p$ is a
 perfect $p$-th power.
\end{lemma}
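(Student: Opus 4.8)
The plan is to read everything off the tower of fields $L(\alpha) \supseteq L(\alpha^p) \supseteq L(\alpha^{p^2})$, where $L := \Q^{\mathrm{ab}} \cdot \Qtp$ and $\alpha$ is a root of $P$. We may assume $P$ is non-constant (otherwise the statement is trivial); then matching degrees in $P_{p^2} = S^p$ forces $p \mid \deg P = n$, so $n \geq p$, and since $P$ is irreducible over $O_p$ in characteristic $0$ the roots $\alpha_1,\dots,\alpha_n$ are distinct, nonzero, and permuted transitively by $G := \gal{\overline{\Q}/L}$. Fix $\alpha := \alpha_1$, so that $a_0^{-1}P = \prod_i (X-\alpha_i)$ is its minimal polynomial and $n = [L(\alpha):L]$. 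I would first record the elementary fact that, for any $m$, the distinct values occurring in the multiset $\{\alpha_i^m\}_i$ are exactly the $G$-conjugates of $\alpha^m$, and that a short transitivity argument forces each of them to occur with the \emph{same} multiplicity, necessarily $D_m := [L(\alpha):L(\alpha^m)]$; hence $P_m(X) = a_0^m\prod_w (X-w)^{D_m}$, the product over the conjugates $w$ of $\alpha^m$. Specializing to $m\in\{p,p^2\}$ (where $a_0^m=(a_0^{m/p})^p$ is automatically a $p$-th power), this gives the dictionary: $P_m$ is a perfect $p$-th power in $O_p[X]$ if and only if $p \mid D_m$. The "only if" is just comparison of root multiplicities in $P_m=S^p$; for the "if", one writes $P_m = S_0^p$ with $S_0 := a_0^{m/p}\prod_w (X-w)^{D_m/p} \in \overline{\Q}[X]$ and checks $S_0 \in O_p[X]$: it is $G$-fixed (its $p$-th power is, and the leading coefficients match, so the ambiguous $p$-th root of unity is $1$) and $v$-integral at every non-Archimedean place by Gauss's lemma, $\|S_0\|_v^p = \|S_0^p\|_v = \|P_m\|_v \leq 1$.

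With this dictionary in hand, set $d := D_p = [L(\alpha):L(\alpha^p)]$ and $r := [L(\alpha^p):L(\alpha^{p^2})]$, so $D_{p^2} = dr$. Since $\mu_p \subseteq L$, each of the degree-$p$ polynomials $Y^p - \alpha^p$ over $L(\alpha^p)$ and $Y^p - \alpha^{p^2}$ over $L(\alpha^{p^2})$ is either irreducible or splits completely, whence $d,r \in \{1,p\}$. The hypothesis that $P_{p^2}$ is a perfect $p$-th power says $p \mid dr$, i.e. $(d,r) \neq (1,1)$; and I want to conclude $d = p$, which by the dictionary is exactly the assertion that $P_p$ is a perfect $p$-th power. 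So the whole problem reduces to ruling out the case $(d,r) = (1,p)$ — this is the main obstacle, and it is where $\mu_{p^2}\subseteq L$ must be used.

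To exclude $(d,r)=(1,p)$: suppose $d = 1$, i.e. $\alpha \in L(\alpha^p)$, and suppose toward a contradiction that $r = p$, i.e. $Y^p - \alpha^{p^2}$ is irreducible over $K := L(\alpha^{p^2})$. Then $L(\alpha^p)/K$ is a cyclic Kummer extension of degree $p$; let $\tau$ generate its Galois group, normalized by $\tau(\alpha^p) = \zeta\alpha^p$ with $\zeta \in \mu_p$ primitive. Since $\alpha\in L(\alpha^p)$ we may apply $\tau$ to $\alpha$, and $\tau(\alpha)^p = \tau(\alpha^p) = \zeta\alpha^p$ shows that $\xi := \tau(\alpha)/\alpha$ satisfies $\xi^p = \zeta$, so $\xi$ is a primitive $p^2$-th root of unity; as $\mu_{p^2}\subseteq L\subseteq K$, $\tau$ fixes $\xi$, and hence $\tau^k(\alpha) = \xi^k\alpha$ for all $k$ by induction. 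Taking $k = p$ yields $\alpha = \tau^p(\alpha) = \xi^p\alpha = \zeta\alpha$, so $\zeta = 1$ — a contradiction. Therefore $(d,r)\neq(1,p)$, so $d=p$ and $P_p$ is a perfect $p$-th power, as claimed. (Run with $\alpha^{p^m}$ in place of $\alpha$, the same computation shows more generally that the tower $L(\alpha)\supseteq L(\alpha^p)\supseteq L(\alpha^{p^2})\supseteq\cdots$ stabilizes "all at once": once one step is trivial, every later step is too.) The only remaining points are the $O_p$-versus-$\overline{\Q}$ bookkeeping forced by the non-monicity of $P$, which I expect to be routine and is already handled by the Galois-descent and Gauss-norm remarks above.
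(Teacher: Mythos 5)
Your proof is correct, and it takes a genuinely different route from the paper's. The paper argues directly on root multiplicities: since $P_{p^2}$ is a perfect $p$-th power and the roots $\alpha_i$ are distinct, some $\lambda\alpha$ with $\lambda\in\mu_{p^2}\setminus\{1\}$ must again be a root of $P$; because $\lambda$ lies in $\Q^{\mathrm{ab}}\cdot\Qtp$ it is fixed by the relevant Galois action, so transitivity yields $\sigma$ with $\sigma(\alpha)=\lambda\alpha$, the iteration $\sigma^j(\alpha)=\lambda^j\alpha$ shows every $\lambda^j\alpha$ is a root, and hence $\alpha^p$ occurs $p$ times among the $\alpha_i^p$, i.e.\ $P_p$ is a perfect $p$-th power. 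You instead encode ``perfect $p$-th power'' as the divisibility $p\mid[L(\alpha):L(\alpha^{m})]$ (your dictionary, with $L=\Q^{\mathrm{ab}}\cdot\Qtp$), use multiplicativity in the tower $L(\alpha)\supseteq L(\alpha^p)\supseteq L(\alpha^{p^2})$ together with Kummer theory ($\mu_p\subset L$ forces each step to have degree $1$ or $p$), and then rule out the one bad case $(d,r)=(1,p)$ by observing that $\xi=\tau(\alpha)/\alpha$ would be a primitive $p^2$-th root of unity fixed by $\tau$, contradicting $\tau^p=\mathrm{id}$. Both arguments hinge on the same arithmetic fact, namely that $\mu_{p^2}\subset\Q^{\mathrm{ab}}\cdot\Qtp$ is Galois-fixed, and your exclusion computation is a close cousin of the paper's iteration; but your packaging is more structural and supplies two points the paper leaves implicit: the integrality descent of the $p$-th root to $O_p[X]$ (Galois invariance fixed by matching leading coefficients, plus Gauss's lemma at the finite places), and the ``all at once'' stabilization of the tower. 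The paper's version is shorter and avoids Kummer theory altogether, working only with the orbit $\{\lambda^j\alpha\}$ and a count of multiplicities.
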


\begin{proof}
Let $\alpha$ be a root of $P(X)$. As $\alpha^{p^2}$ is among the roots of $P_{p^2}$,
which we assume a perfect $p$-th power, there must be a non-trivial
root of unity $\lambda \in \mu_{p^2} \setminus \{1\}$, of order either $p$
or $p^2$, such that $\lambda \alpha$ is also
among the roots of $P(X)$. As $P \in O_p[X]$ is irreducible, and $\lambda \in O_p$,
it follows that $\lambda^j \alpha$ is a root of $P$ for every $j \in \N$: there
is an element $\sigma$ in the Galois group of $P$ over $O_p$ carrying
$\sigma(\alpha) = \lambda \alpha$ (the Galois group of an irreducible
polynomial acts transitively on the roots), and then
$\sigma(\lambda \alpha) = \lambda \sigma(\alpha) = \lambda^2 \alpha,
\ldots, \sigma(\lambda^{j-1} \alpha) = \lambda^{j-1} \sigma(\alpha) =
\lambda^j \alpha$ are also roots. Thus $\alpha^p$
is a $p$-fold root of $P_p(X)$, and $P_p(X)$
is a perfect $p$-th power.
\end{proof}

\section{The Archimedean input: hedgehogs in the Riemann sphere and Dubinin's theorem} \label{hedgehogs}

By a \emph{hedgehog} with vertices $a_1, \ldots, a_n \in \C^{\times}$ we mean the union of the $n$ closed
radial segments $[0,a_i]$ joining the origin $0$ to the points $a_i$ in the complex plane.
Let us denote this hedgehog by $\mathcal{K}(a_1, \ldots, a_n) \subset \C$.
The simplest way to supply the Archimedean ingredient in our proof, also
leading to the constant $2^{1/4}$ in Theorem~\ref{original},
is to use the following theorem of Dubinin~\cite{dubinin,dubininbook}. It is itself
a byproduct of Dubinin's solution of a problem of Gonchar concerning maximal
harmonic measure at the origin for a set consisting of $n$ inward radial slits
of the unit disk $\{z : |z| < 1\}$ cutting to a common fixed length $r \in (0,1)$ from the origin.
The author is grateful to Alexander Er\"emenko and Fedor Nazarov
for pointing him to Dubinin's work.

\begin{thm}[Dubinin] \label{dubinin}
  The hedgehog $\mathcal{K}(a_1, \ldots, a_n) \subset \C$ has transfinite diameter
  at most $\big( \max_i |a_i|^n  / 4 \big)^{1/n}$, with equality if and only if
the points $a_1, \ldots, a_n$ form the vertices of a regular $n$-gon centered
at the origin.
\end{thm}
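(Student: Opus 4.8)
The plan is to deduce the statement from the conformal-geometric result on slit disks that Dubinin proved in solving Gonchar's problem, rather than re-proving that result from scratch. The key object is the complement $\Omega = \widehat{\C} \setminus \mathcal{K}(a_1,\dots,a_n)$ in the Riemann sphere, a connected open set containing $\infty$; its transfinite diameter (equivalently, its logarithmic capacity) equals $e^{-\gamma}$, where $\gamma$ is the Robin constant of $\Omega$ at $\infty$, i.e. the constant term in the expansion $g_{\Omega}(z,\infty) = \log|z| - \gamma + o(1)$ of the Green's function with pole at $\infty$. So the task is to show $\gamma \geq \tfrac{1}{n}\log\big(4 / \max_i|a_i|^n\big)^{1/n} \cdot n$; more precisely, writing $R := \max_i |a_i|$, we must show $\operatorname{cap}(\mathcal{K}) \leq (R^n/4)^{1/n}$, with equality exactly for the regular-$n$-gon configuration $a_i = \zeta R$ with $\zeta$ ranging over the $n$-th roots of unity (up to rotation).

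First I would record the scaling reduction: $\operatorname{cap}(t \cdot \mathcal{K}(a_1,\dots,a_n)) = t\,\operatorname{cap}(\mathcal{K}(a_1,\dots,a_n))$, so after multiplying all $a_i$ by $R^{-1}$ we may assume $\max_i|a_i| = 1$, i.e. all vertices lie in the closed unit disk and at least one lies on the unit circle, and the claim becomes $\operatorname{cap}(\mathcal{K}) \leq 4^{-1/n}$. Next comes the monotonicity step: capacity is monotone under inclusion, so enlarging the hedgehog can only increase the capacity. The extremal problem of maximizing the capacity over all hedgehogs with $\max_i|a_i|\le 1$ therefore concentrates on configurations where every vertex is pushed out to the unit circle — each slit $[0,a_i]$ is contained in a ``full radial slit'' $[0,\zeta_i]$ with $|\zeta_i|=1$. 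Thus $\operatorname{cap}(\mathcal{K}(a_1,\dots,a_n)) \le \operatorname{cap}(\mathcal{K}(\zeta_1,\dots,\zeta_n))$ where now all $|\zeta_i| = 1$, and we are exactly in the situation of $n$ inward radial slits of the unit disk cutting all the way to the center. (In Dubinin's formulation one keeps a common inner radius $r\in(0,1)$ and lets $r\to 0$; here $r=0$ is the relevant limiting/degenerate case, handled by the continuity of capacity under Hausdorff convergence of the slits, or directly.)

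Now I would invoke Dubinin's theorem on the harmonic measure / capacity of radially slit disks: among all configurations of $n$ radial slits of equal length in the unit disk, the one that is extremal (maximal capacity of the slit set, equivalently maximal harmonic measure of the slits as seen from the origin in the complement) is the symmetric one, with the slits equally spaced — the regular $n$-gon of directions $\zeta_i = e^{2\pi i k/n}$. This is the heart of the matter and is precisely Dubinin's solution of Gonchar's problem via his symmetrization/polarization technique (dissymmetrization); I would cite \cite{dubinin,dubininbook} for it rather than reproduce the polarization argument. It then remains to compute the capacity of the symmetric hedgehog $\mathcal{K}(1,\zeta,\dots,\zeta^{n-1})$, $\zeta = e^{2\pi i/n}$. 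For this one uses the explicit uniformization: the map $w = z^n$ sends $\widehat{\C}\setminus \mathcal{K}(1,\zeta,\dots,\zeta^{n-1})$ (an $n$-fold symmetric domain) onto $\widehat{\C}\setminus[0,1]$, and the latter is uniformized by the Joukowski-type map, giving $\operatorname{cap}([0,1]) = 1/4$. Tracking the behaviour at $\infty$ through $w=z^n$: if $\phi$ is the conformal map of $\{|u|>1\}$ onto $\widehat{\C}\setminus[0,1]$ with $\phi(u) = \tfrac14 u + O(1)$, then $\psi(u) := \phi(u^n)^{1/n}$ (a suitable branch) conformally maps $\{|u|>1\}$ onto the hedgehog complement with $\psi(u) = 4^{-1/n} u + O(1)$, whence $\operatorname{cap}(\mathcal{K}(1,\dots,\zeta^{n-1})) = 4^{-1/n}$. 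Combined with the two inequalities above (scaling-normalized, then slit-extension monotonicity, then Dubinin's symmetry bound) this yields $\operatorname{cap}(\mathcal{K}(a_1,\dots,a_n)) \le R \cdot 4^{-1/n} = (R^n/4)^{1/n}$, and the equality case is exactly when no enlargement was needed and Dubinin's bound is attained, i.e. $|a_i| = R$ for all $i$ and the $a_i/R$ are equally spaced on the unit circle — the regular $n$-gon centered at the origin.

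The main obstacle is genuinely the symmetry assertion for radially slit disks (that equally-spaced slits maximize the relevant capacity/harmonic measure): this is a nontrivial theorem whose proof requires Dubinin's dissymmetrization method, and I would treat it as a black box cited from \cite{dubinin,dubininbook}. The other steps — scaling, monotonicity under inclusion of the slits, and the $w=z^n$ computation of the symmetric capacity — are routine potential theory. One minor technical point to be careful about is the $r\to 0$ degeneration (slits cutting all the way to the origin, so the ``disk with $n$ slits removed'' becomes simply connected complement of a hedgehog): one should check that capacity is continuous along this degeneration, e.g. by the continuity of logarithmic capacity under decreasing Hausdorff limits of compacta, so that Dubinin's extremality, established for $r>0$, passes to the limit $r=0$ that we need here.
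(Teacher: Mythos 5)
Your proposal is correct and follows essentially the same route as the paper: both reduce the statement to Dubinin's Corollary (the $r\to 0$ limit of his radially-slit-disk result, cited as a black box), combine it with monotonicity of transfinite diameter under inclusion of the slits, and evaluate the symmetric configuration explicitly via the pullback of a segment under $z\mapsto z^n$, giving the value $(R^n/4)^{1/n}$. The only differences are cosmetic (your scaling normalization and the garbled Robin-constant sentence at the start, which your later precise formulation corrects).
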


\begin{proof}
This is tantamount to the $r \to 0$ limit case of the {\it Corollary} (to Theorem~1 of {\it loc.cit.}) on page~270
of Dubinin's paper~\cite{dubinin}. Recall that the transfinite diameter of a compact set
$\mathcal{K} \subset \C$ is equal to the logarithmic capacity of $\mathcal{K}$ with respect
to the divisor $[\infty]$ on the Riemann sphere $\widehat{\C}$. If $\widehat{\C} \setminus \mathcal{K}$ is
simply connected (as is the case with our hedgehogs), the latter is furthermore equal to the inverse of the conformal
mapping radius of  $\widehat{\C} \setminus \mathcal{K}$ from the point $\infty$. It remains
to note the monotonicity of transfinite diameter under inclusion,
and that the transfinite diameter of the rotationally symmetric hedgehog
with $n$ spikes of a common length $b^{1/n}$ is equal to $(b/4)^{1/n}$. (A line segment of
length $b$ has transfinite diameter $b / 4$, and our rotationally symmetric hedgehog
is just the inverse image of the segment $[0,b]$ under the map $z \mapsto z^n$. One may
also see this value by observing that the map $z^{-1} \mapsto z^{-1} / (  1 - bz^{-n} / 4 )^{2/n} = z^{-1}
+ O(z^{-2})$ is a conformal isomorphism from the disk $|z^{-1}| < (4/b)^{1/n}$ about $\infty$ onto the complement
$\widehat{\C} \setminus \mathcal{K}(b^{1/n}e^{2\pi i h / n} : \, 0 \leq h < n)$.)

For alternative
approaches, we refer to the discussion in~\cite{mo}, based around the paper~\cite{konyaginlev}
of Konyagin and Lev. Dubinin's desymmetrization method, and the proof of Dubinin's theorem,
is exposed in detail in section~4.4 of the book~\cite{dubininbook}, and a related
 treatment can be also found in Baernstein~\cite{baernstein}
or Haliste~\cite{haliste}.
\end{proof}

\section{Gluing it together by arithmetic algebraization: Bertrandias's theorem}

The simplest route to concluding the proofs of Theorems~\ref{original} and~\ref{sz} is by applying
Bertrandias's rationality theorem~\cite{bertrandias,amice,cantor}. We recall here the relevant special case.
As is customary, we denote by $\widehat{\C}$ the Riemann sphere and by $d(A)$ the transfinite diameter of a compact subset $A \subset \C$. We
refer to Amice~\cite{amice}, section~5.4.1 for the definition and basic properties
of transfinite diameter. We note that Andr\'e's rationality theorem VIII~1.6 in~\cite{andre} (cf.
the proof of Theorem~\ref{holonomic} below) may be also
used as an alternative for Bertrandias's theorem, at the expense of worse numerical constants in Theorems~\ref{original}
and~\ref{sz}. For simplicity, since this suffices for our application, we only consider round circular domains at the
non-Archimedean places. Here, as in Proposition~\ref{integrality}, the $v$-adic absolute values follow the standard normalization
$|\ell|_v = 1/\ell$ for the unique rational prime dividing $\ell \mid v$.

 For the proof of Theorem~\ref{original}, the reader may simply take
Corollary~\ref{rationalinteger} below (the case $O_{F,S} = \Z$ in the following).

\begin{thm}[Bertrandias~\cite{bertrandias,amice,cantor}]  \label{rationality}
  Let $F$ be a number field and $S \subset M_F^{\mathrm{fin}}$ a finite
  subset of its non-Archimedean places.  Consider a formal power series
  $$
  f(1/X) = \sum_{n \geq 0} a_n X^{-n} \in O_{F,S}[[1/X]]
  $$
  with $S$-integral coefficients $a_n \in O_{F,S}$. For every $v \in S$,
  suppose that $f(1/X)$ is convergent on the $v$-adic disk $|1/X|_v < R_v$. Further, for every field embedding $\sigma : F \hookrightarrow \C$,
  consider a compact subset $\mathcal{K}_{\sigma} \subset \C$  such that $\widehat{\C} \setminus \mathcal{K}_{\sigma}$ is connected
   and the power series
  $f^{\sigma}(1/X):= \sum \sigma(a_n)  X^{-n}$
  has analytic continuation to $\widehat{\C} \setminus \mathcal{K}_{\sigma}$.  If
  \begin{equation} \label{capacity}
  \prod_{\sigma : F \hookrightarrow \C} d( \mathcal{K}_{\sigma}) < \prod_{v \in S} R_v,
  \end{equation}
  then $f \in F(X) \cap F[[1/X]]$ is a rational power series.

  The right-hand side of (\ref{capacity}) is understood to be $= 1$ in the case $S = \emptyset$
  of an empty product (a power series in $O_F[[1/X]]$).
\end{thm}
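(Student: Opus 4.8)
The plan is to run the classical Hankel-determinant argument of P\'olya and Bertrandias, combining an Archimedean estimate governed by transfinite diameter with ultrametric Cauchy estimates at the places of $S$, and concluding through the product formula. For $N \ge 0$ put $D_N := \det(a_{i+j})_{0 \le i,j \le N} \in O_{F,S}$. By Kronecker's theorem on Hankel determinants, a power series $\sum_{n\ge 0} a_n t^n$ represents a rational function if and only if $D_N = 0$ for all sufficiently large $N$. Hence it suffices to show, assuming~(\ref{capacity}), that $D_N \ne 0$ for only finitely many $N$; and once $f(1/X)$ is known to be rational, the rational function it represents has coefficients in $F$ (they satisfy a linear system over $F$ determined by finitely many of the $a_n$), so $f \in F(X) \cap F[[1/X]]$.

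Fix $N$ with $D_N \ne 0$ and estimate $\|D_N\|_v$ at each place $v$ of $F$. At a finite place $w \notin S$, $S$-integrality gives the trivial bound $\|D_N\|_w \le 1$. At $v \in S$, convergence of $f(1/X)$ on $|1/X|_v < R_v$ yields $|a_n|_v \le C_v(r)\, r^{-n}$ for each $r < R_v$, and then the ultrametric expansion of the determinant, together with $\sum_i (i + \pi(i)) = N(N+1)$ for a permutation $\pi$, gives $|D_N|_v \le C_v(r)^{N+1} r^{-N(N+1)}$. At an Archimedean place attached to an embedding $\sigma : F \hookrightarrow \C$, I would use the contour-integral identity
$$
D_N = \frac{1}{(N+1)!}\,\frac{1}{(2\pi i)^{N+1}} \oint_{\Gamma_\sigma} \cdots \oint_{\Gamma_\sigma} \prod_{i=0}^N \frac{f^{\sigma}(1/X_i)}{X_i} \prod_{0 \le i < j \le N}(X_i - X_j)^2 \, dX_0 \cdots dX_N,
$$
valid for any contour $\Gamma_\sigma \subset \widehat{\C} \setminus \mathcal{K}_\sigma$ winding once around $\mathcal{K}_\sigma$: it is obtained by writing $a_{i+j} = \frac{1}{2\pi i}\oint_{\Gamma_\sigma} f^{\sigma}(1/X) X^{i+j-1}\, dX$ (deform a large circle down to $\Gamma_\sigma$, the integrand being analytic on $\widehat{\C} \setminus \mathcal{K}_\sigma$), using a separate variable $X_i$ per row, recognizing the Vandermonde $\det(X_i^j) = \prod_{i<j}(X_j - X_i)$, and symmetrizing over the $X_i$. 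It follows that $|D_N|_\sigma$ is at most $\big(L_\sigma M_\sigma/(2\pi m_\sigma)\big)^{N+1}$ times $\max_{X_i \in \Gamma_\sigma}\prod_{i<j}|X_i - X_j|^2$, where $L_\sigma$ is the length of $\Gamma_\sigma$, $M_\sigma = \sup_{\Gamma_\sigma}|f^{\sigma}(1/X)|$ and $m_\sigma = \inf_{\Gamma_\sigma}|X| > 0$; and by Fekete's theorem the last maximum is $\le (d(\mathcal{K}_\sigma) + \epsilon)^{N(N+1)}$ once $\Gamma_\sigma$ is taken sufficiently close to $\mathcal{K}_\sigma$ and $N$ is large.

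Multiplying these bounds over all $v$ and invoking the product formula, $1 = \prod_v \|D_N\|_v \le C^{N} \big( \prod_\sigma (d(\mathcal{K}_\sigma)+\epsilon) \big/ \prod_{v\in S}(R_v - \epsilon) \big)^{N(N+1)}$ for a constant $C$ independent of $N$ (the normalization weights being bookkeeping, and trivial in the totally split situation of the applications). Since~(\ref{capacity}) permits a choice of $\epsilon > 0$ with $\prod_\sigma (d(\mathcal{K}_\sigma)+\epsilon) < \prod_{v\in S}(R_v - \epsilon)$, the right-hand side tends to $0$, forcing $D_N = 0$ for all large $N$, which is exactly what is needed.

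I expect the one delicate point to be the Archimedean estimate. The symmetrization of the Hankel integral is essential: it trades the harmful factor $\prod_i X_i^{\,i-1}$ (whose size is of order $R^{N^2/2}$ on a fixed contour and would swamp everything) for the harmless $\prod_i 1/X_i$, so that the entire $N^2$-order growth is carried by the Vandermonde product and is therefore controlled, via Fekete's theorem, by the transfinite diameter $d(\mathcal{K}_\sigma)$; and shrinking the contours $\Gamma_\sigma$ onto $\mathcal{K}_\sigma$ --- legitimate because $\widehat{\C} \setminus \mathcal{K}_\sigma$ is connected, so the capacity of the enclosed region decreases to $d(\mathcal{K}_\sigma)$ --- is what produces the sharp threshold $\prod_\sigma d(\mathcal{K}_\sigma)$ rather than $\prod_\sigma d(\Gamma_\sigma)$.
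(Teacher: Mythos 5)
Your proposal is correct in substance and follows the same global skeleton as the paper's own outline --- Kronecker's Hankel-determinant criterion, place-by-place majorization of the determinants, and the product formula --- but it obtains the crucial Archimedean estimate by a different classical device. The paper writes $D_k(f^\sigma)$ as a determinant of residues $\mathrm{res}_{z=0}\big(f^\sigma(1/z)p_i(z)p_j(z)\,dz\big)$ against monic Chebyshev-type polynomials adapted to $\mathcal{K}_\sigma$ and then applies Hadamard's inequality to the rows; you instead symmetrize the multiple contour integral (Heine's identity), so that all of the $N^2$-order growth is carried by the Vandermonde factor, which you control by Fekete's theorem on the $(N+1)$-point diameters of a cycle shrinking onto $\mathcal{K}_\sigma$. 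The two devices are essentially of equal depth (the Chebyshev constant equals the transfinite diameter), and both deliver $\limsup_N |\sigma(D_N)|^{1/N^2}\le d(\mathcal{K}_\sigma)$; your variant is closer to P\'olya's original argument and makes transparent why the symmetrization is indispensable, while the paper's variant avoids multiple integrals and the limiting argument over contours. Two small points should be patched. First, your moment identity $a_m=\frac{1}{2\pi i}\oint_{\Gamma_\sigma}f^\sigma(1/X)X^{m-1}\,dX$ fails for $m=0$ when $0\notin\mathcal{K}_\sigma$ and $\Gamma_\sigma$ does not enclose the origin, because $X^{-1}$ then contributes a pole at $X=0$ inside the deformation region; either let the cycle $\Gamma_\sigma$ also encircle the origin (harmless for the estimate, since a single point has capacity zero, so $d(\mathcal{K}_\sigma\cup\{0\})=d(\mathcal{K}_\sigma)$), or work with the shifted determinants $\det(a_{i+j+1})$ as the paper does --- these serve equally for Kronecker's criterion, and the shift is exactly what makes the residue pairing involve only nonnegative powers of $X$. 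Second, in the product formula the finite places enter with their local degrees, so with the stated normalization $|\ell|_v=1/\ell$ the non-Archimedean threshold produced by your argument is $\prod_{v\in S}R_v^{[F_v:\Q_\ell]}$ (with $\ell$ the rational prime below $v$) rather than $\prod_{v\in S}R_v$; this is the same bookkeeping convention that the paper itself elides and it is immaterial in the applications, but it deserves one explicit sentence if you want the statement exactly as phrased.
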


{\it An outline of the proof. } For the reader's convenience, we include some indications on the proof of Bertrandias's theorem.
As will be apparent from this outline, it is not necessary to have $\mathcal{K}_{\sigma} = \mathcal{K}_{\overline{\sigma}}$ for
complex conjugate embeddings $\sigma$ and $\overline{\sigma}$.

The argument, exposed in detail by Amice~\cite{amice}, is based on Kronecker's criterion
translating rationality of $f$ to the vanishing of the determinants $D_k(f) := \det \big( a_{i+j+1} \big)_{i,j=0}^k \in O_{F,S}$
for all large enough~$k$. The result is a majorization of $|N_{F/\Q}(D_k(f))|$, which is a rational $S$-integer 
 of denominator bounded by
$\prod_{v \in S}R_v^{-k^2 + o(k^2)}$.  For any sequence $p_0(z), p_1(z), \ldots$ of complex monic polynomials
with $\deg{p_j} = j$, and complex embedding $\sigma : F \hookrightarrow \C$, the
determinant
 $D_k(f^{\sigma}) = \det \big( \mathrm{res}_{z = 0}( f^{\sigma}(1/z)p_i(z)p_j(z) \, dz ) \big)_{i,j = 0}^k$.
The polynomials $p_j(z)$ are chosen according to Chebyshev's minimax problem on $\mathcal{K}_{\sigma}$:
there exists such a sequence $(p_j)_{j \in \N}$ satisfying
$\sup_{\mathcal{K}_{\sigma}(\epsilon)} |p_j| < (d(\mathcal{K}_{\sigma}) + o_{\epsilon \to 0, j \to \infty}(1))^j$,
on the $\epsilon$-thickening $\mathcal{K}_{\sigma}(\epsilon)$ of $\mathcal{K}_{\sigma}$. With this
choice of the monic polynomials $p_j(z)$ at $\sigma$, one applies the Cauchy residue formula and
Hadamard's volume inequality to bound the absolute value of the latter determinant by
the product of the $L^2$-norms of its rows. It results
that
$$
\limsup_{k \in \N} \Big|\sigma \Big(\det \big( a_{i+j+1} \big)_{i,j=0}^k \Big)\Big|^{\frac{1}{k^2}} \leq d(\mathcal{K}_{\sigma}),
$$
for each $\sigma : F \hookrightarrow \C$. \proofend

We spell out the special case of $O_{F,S} = \Z$, which suffices for Theorem~\ref{original}.

\begin{corol} \label{rationalinteger}
  An integer coefficients formal power series $f(1/X) \in \Z[[1/X]]$ is rational as soon as
  $f(1/X)$ is analytic on a connected complement $\widehat{\C} \setminus \mathcal{K}$
  of some compact set $\mathcal{K} \subset \C$ having transfinite diameter $d(\mathcal{K}) < 1$. \proofend
\end{corol}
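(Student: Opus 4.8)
The plan is to deduce this immediately from Theorem~\ref{rationality}. Take $F = \Q$, so that $O_F = \Z$, and let $S = \emptyset$ be the empty set of non-Archimedean places, with $O_{F,S} = \Z$. There is a single complex embedding $\sigma : \Q \hookrightarrow \C$, the identity, and I would set $\mathcal{K}_{\sigma} := \mathcal{K}$. By hypothesis $\widehat{\C} \setminus \mathcal{K}$ is connected and $f(1/X)$ has analytic continuation there, so the analytic-continuation requirement of Theorem~\ref{rationality} is met; and since $S = \emptyset$, the right-hand side of the capacity inequality~(\ref{capacity}) is the empty product, equal to $1$, while the left-hand side is $d(\mathcal{K})$. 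Thus the hypothesis $d(\mathcal{K}) < 1$ is exactly~(\ref{capacity}), and Theorem~\ref{rationality} yields $f \in \Q(X) \cap \Q[[1/X]]$, i.e. $f$ is rational.

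For a reader who prefers a self-contained argument in this case, I would instead run the proof sketched after Theorem~\ref{rationality} directly over $\Z$. By Kronecker's criterion, $f(1/X) = \sum_n a_n X^{-n}$ is rational if and only if the Hankel determinants $D_k(f) := \det\big( a_{i+j+1} \big)_{i,j=0}^k$ vanish for all sufficiently large $k$, and here each $D_k(f)$ is a rational integer. Choosing monic polynomials $p_j(z)$ of degree $j$ that are asymptotically Chebyshev-extremal on an $\epsilon$-thickening of $\mathcal{K}$, using the identity $D_k(f) = \det\big( \mathrm{res}_{z=0}( f(1/z) p_i(z) p_j(z)\, dz ) \big)_{i,j=0}^k$ (valid by the analytic continuation of $f$ across $\widehat{\C} \setminus \mathcal{K}$), and bounding this determinant by the product of the $L^2$-norms of its rows via Hadamard's inequality, one obtains $\limsup_k |D_k(f)|^{1/k^2} \le d(\mathcal{K}) < 1$. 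Hence $D_k(f) \to 0$; being integers, they vanish for $k$ large, and $f$ is rational.

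The substantive content is entirely inside Theorem~\ref{rationality}: the asymptotic control of the Chebyshev minimax constant on $\mathcal{K}$ by the transfinite diameter $d(\mathcal{K})$, together with the residue/contour-integral representation of the Hankel determinants and the Hadamard volume estimate. Taking that theorem as given, the only thing left to verify for the corollary is the bookkeeping that the empty product over $S$ equals $1$, so that no $v$-adic radii of convergence enter the inequality; the corollary is then a one-line specialization.
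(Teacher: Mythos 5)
Your proposal is correct and matches the paper's route exactly: the corollary is stated there precisely as the specialization of Theorem~\ref{rationality} to $F=\Q$, $S=\emptyset$ (so $O_{F,S}=\Z$ and the right-hand side of~(\ref{capacity}) is the empty product $=1$), with a single embedding and $\mathcal{K}_{\sigma}=\mathcal{K}$. Your supplementary self-contained argument via Kronecker's criterion and Hankel determinants is just the paper's own outline of Bertrandias's theorem run over $\Z$, so nothing essentially new, but it is sound.
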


\medskip

We have now assembled all the pieces necessary to prove our main result.

\medskip

{\it Proof of Theorem~\ref{original}. } As a preliminary remark, let us
start by noting that it is no loss of generality for Theorem~\ref{original} to
assume that our polynomial is reciprocal. Indeed, by Smyth's theorem~\cite{smythnr}, a non-reciprocal integer polynomial
of degree $n > 1$ has at least one root outside of the disk $|z| < \rho^{1/n}$,
where $\rho = 1.324\ldots > 1.189\ldots = 2^{1/4}$ is the real root of $\rho^3 = \rho + 1$.
Hence the reader may assume at this point that our monic integer polynomial $P$ is reciprocal,
and take $P^{\star} = P$ in the following.
This reduction however is not necessary, and with an eye to the generalization for
Theorem~\ref{sz} below, we shall denote $P^{\star}(X) = X^n P(1/X)$
the reciprocal polynomial. It has complex factorization $P^{\star}(X) = \prod_{i=1}^n (1 - \alpha_iX)$
and associated polynomials $P_m^{\star}(X) := \prod_{i=1}^n (1 - \alpha_i^{m}X) = (P_m)^{\star}$. Our
monicity convention for $P$ now translates into the free term condition $P^{\star}(0) = 1$,
so that Proposition~\ref{squarerad} is applicable to $P^{\star}$.

Consider the power series
$$
f(1/X) := \sqrt{P_2^{\star}(1/X)P_4^{\star}(1/X)} \in \Z[[1/X]],
$$
where the integrality of coefficients comes from Proposition~\ref{squarerad}.
We apply Bertrandias's Theorem (Corollary~\ref{rationalinteger}) to this power series
$f(1/X)$ and the hedgehog
$$
\mathcal{K} := \mathcal{K}( \alpha_i^2, \alpha_i^4 : P(\alpha_i) = 0 )
= \mathcal{K}( \alpha : P_2^{\star}(1/\alpha) P_4^{\star}(1 / \alpha) = 0 ) \subset \C
$$
of $2n$ vertices (possibily with overlapping edges) $\alpha_i^2, \alpha_i^4$, where
$\alpha_1, \ldots, \alpha_n$ are the complex roots of $P(X)$. By Dubinin's Theorem~\ref{dubinin}, this hedgehog
has transfinite diameter bounded above by
$$
d(\mathcal{K}) \leq \Big( \frac{\max_i |\alpha_i|^{4 \cdot 2n}}{4} \Big)^{\frac{1}{2n}} =
\Big( \frac{\max_i |\alpha_i|^{8n}}{4} \Big)^{\frac{1}{2n}}.
$$
Suppose now that $\max_i |\alpha_i| < 2^{\frac{1}{4n}}$. Then $d(\mathcal{K}) < 1$, and
since the power series $f(1/X)$ has analytic continuation to the connected and simply connected domain
$\widehat{\C} \setminus \mathcal{K}$ in the Riemann sphere avoiding the branch points of the square root,
 Bertrandias's Corollary~\ref{rationalinteger} yields the rationality of the power series
$f(1/X)$. This means that the polynomial $P_2P_4$ is a perfect square.
If the polynomial $P_2(X) \in \Z[X]$ is irreducible, this is only possible when $P_4 = P_2$;
Lemma~\ref{square} tells us that this is equivalent to $P$ being a cyclotomic polynomial of an odd level. If to the contrary
$P_2$ is reducible,
the minimal polynomial $Q(X)$ of $\alpha_1^2$  has degree $\deg{Q} = \deg{P} / 2 = n /2$,
and its roots are at the set $\{\alpha_1^2, \ldots, \alpha_n^2\}$. Upon passing in this
way from $P$ to $Q$,
Theorem~\ref{original} now follows by induction on the degree $n$.  \proofend

\medskip

{\it Proof of Theorem~\ref{sz}.} It follows similarly from Bertrandias's theorem over the Galois closure $F$
of the number field $K$. Clearing denominators, we may assume that our irreducible polynomial $P \in O_p[X]$ has coefficients
in $O_p$.
With $P^{\star}(X) := X^n P(1/X)$ denoting again the reciprocal, so that $P^{\star}(0) = a_0$ is the
leading coefficient of $P(X)$, we consider
at every complex embedding $\sigma : F \hookrightarrow \C$ the hedgehog
\begin{eqnarray*}
\mathcal{K}_{\sigma}
 :=  \mathcal{K}\big( \alpha : ( P_{p^e}^{\sigma} \cdot P_{p^2}^{\sigma\kappa} )(\alpha) = 0  \big)
\end{eqnarray*}
with $m \leq 2n$  spikes, where $e = 1$ or $2$ according to the cases (i) or (ii) in Lemma~\ref{cong},
and $\kappa \in \mathrm{Gal}(F/\Q)$  the Amoroso-Dvornicich element in Lemma~\ref{cong}.
Consider now the
algebraic power series
$$
f(1/X) := \sqrt[p]{ P^{\star}_{p^e}(1/X)^{p-1} (P_{p^2}^{\kappa})^{\star}(1/X)  } \in O_{F,S}[[1/X]],
$$
where $S \subset M_F^{\mathrm{fin}}$ is the set of non-Archimedean places of $F$ dividing $a_0 = P^{\star}(0)$.
The $S$-integrality of this expansion comes from Proposition~\ref{integrality}.

By Dubinin's theorem, our hedgehog $\mathcal{K}_{\sigma}$ has transfinite diameter
\begin{eqnarray*}
d(\mathcal{K}_{\sigma}) \leq \Big( \frac{   \max_{\alpha:  \, (P^{\sigma} \cdot P^{\sigma\kappa} )(\alpha) = 0} \max( | \alpha|^{p^2 \cdot  m}, |\alpha|^{p \cdot m}
 ) }{4} \Big)^{\frac{1}{m}} \\
 \leq   \frac{  \max_{\alpha : \,(P^{\sigma} \cdot P^{\sigma\kappa} )(\alpha) = 0} \max( | \alpha|, 1
 )^{p^2} }{2^{1/n}},
\end{eqnarray*}
as our number of spikes $m \leq 2n$.
Multiplying these estimates over all complex field embeddings $\sigma : F \hookrightarrow \C$, each polynomial
$\sigma(P) = P^{\sigma}$ occurs twice in the total product. By Proposition~\ref{integrality},
we furthermore know that $f(1/X)$ is convergent on the $v$-adic disk $|1/X|_v < R_v$ of the radius
$R_v := ( |a_0|_v / \max_{i=0}^n |a_i|_v )^{p^{e+1}}$, for every $v \in S$. Here $P(X) = a_0X^n + \cdots + a_n$, and
we have
$$
c_{P/F} =  \prod_{v \in M_F^{\mathrm{fin}}} \frac{\max_{i=0}^n |a_i|_v}{|a_0|_v} = \prod_{v \in S} \frac{\max_{i=0}^n |a_i|_v}{|a_0|_v} \in \N.
$$
Thus $\prod_{v \in S} R_v = c_{P/F}^{-p^{e+1}}$ and, noting that $p^{e+1} \leq p^3$ in any case, we find
 that the condition~(\ref{capacity}) in Bertrandias's theorem is met as soon as the $\sigma$-average
\begin{equation} \label{cond}
\frac{p^3}{[F:\Q]}\log{c_{P/F}}  + \frac{2p^2}{[F:\Q]} \sum_{ \sigma : F \hookrightarrow \C }  \max_{\alpha: \, \sigma(P)(\alpha) = 0} \log^+ {|\alpha|} < \frac{\log{2}}{n}.
\end{equation}
If the coefficients of $P$ are already contained by a subfield $K \subset F$,
the left-hand side of this inequality is unchanged when $F$ is replaced by $K$.

Consider now in turn the two alternatives (i) ($p$ unramified in $F$) and (ii) ($p$ ramified in $F$)
of Lemma~\ref{cong}. In the former case, by Lemma~\ref{square}, the conclusion $\sqrt[p]{ P_p^{p-1}P_{p^2}^{\kappa} } \in \Q(X)$
from Bertrandias's theorem (note that $n > 1$ and we assume $P$ to be irreducible over $O_p \supset \mu_{\infty}$) means  that
 $p \mid n$ and $P_p(X) = Q(X)^p$ for some polynomial $Q(X) \in O_p[X]$
of degree $n/p$.
 Then the first clause of Theorem~\ref{sz} follows inductively on passing to the irreducible factors
 of the degree-$n/p$
polynomial~$Q$, whose roots are at the set $\{ \alpha^p \mid P(\alpha) = 0 \}$.

If instead $p$ ramifies in the number field $F$, so that we are under the case (ii) of Lemma~\ref{cong}
and Proposition~\ref{integrality}, the conclusion from Bertrandias's theorem
is that either $P_{p^2}$ and $P_{p^2}^{\kappa}$ are scalar-proportional, or else $p \mid n$
and the polynomial $P_{p^2}(X)$ is a perfect $p$-th power. In the latter alternative,
Lemma~\ref{already} tells us that already $P_p(X) = Q(X)^p$ is a perfect
$p$-th power and, similarly to the previous, we get the first clause of Theorem~\ref{sz}
by induction on the degree, on passing again to the irreducible constituents
of the degree-$n/p$ polynomial $Q \in O_p[X]$.

 To conclude the proof of the first clause of Theorem~\ref{sz},
 it remains to consider the former alternative, that $a_0^{-p^2} P_{p^2} = (a_0^{-p^2} P_{p^2})^{\kappa}$. In
 that case,
 Lemma~\ref{cong}  supplies the existence of a root of unity $\zeta \in \mu_{Np}$
 such that
   $(a_0\zeta^n)^{-1} P(\zeta X) \in (C_{N/p} \cdot \Qtp) [X]$. Now we conclude by
 a level-lowering induction upon replacing the polynomial $P(X) \in (C_N \cdot \Qtp)[X]$
by the polynomial
$$
\widetilde{P}(X) :=  (a_0\zeta^n)^{-1} \cdot P(\zeta X) \in ((C_{N/p} \cdot \Qtp) [X],
$$
which remains irreducible over $O_p$ and has $c_{\widetilde{P}/K} = c_{P/K}$.

In the case ($p \nmid N$) that $p$ is unramified in $F$, we have $e = 1$ in the previous,
and the condition~(\ref{cond}) for Bertrandias's theorem naturally
strengthens to a $p^2$ coefficient instead of $p^3$ in $\log{c_{F/K}}$, leading
to the second clause of Theorem~\ref{sz}.

For the third clause, consider finally the case that $P \in \Qtp[X]$,
so that we have $e = 1$ and $\kappa = \mathrm{id}$ in the previous. The estimate
on the transfinite diameter $d(\mathcal{K}_{\sigma})$ of our hedgehog
then improves as follows. If $\sigma : F \hookrightarrow \C$ is such that
$\sigma(P) = P^{\sigma} \in \C[X]$ has a root outside of the unit disk $|X| \leq 1$,
then $\max_{\alpha: \, P^{\sigma}(\alpha) = 0} \max(|\alpha|^{p^2}, |\alpha|^p)
= \max_{\alpha: \, P^{\sigma}(\alpha) = 0} |\alpha|^{p^2}$. If to the contrary
all the roots of $P^{\sigma} \in \C[X]$ are contained by the disk $|X| \leq 1$,
then $\max_{\alpha: \, P^{\sigma}(\alpha) = 0} \max(|\alpha|^{p^2}, |\alpha|^p)
= \max_{\alpha: \, P^{\sigma}(\alpha) = 0} |\alpha|^{p}$. Unifying, we have
\begin{eqnarray*}
\max_{\alpha: \, P^{\sigma}(\alpha) = 0} \max(|\alpha|^{p^2}, |\alpha|^p)
= \max_{\alpha: \, P^{\sigma}(\alpha) = 0} \big\{  \max(|\alpha|,1)^{p^2} \cdot \max(|1/\alpha|,1)^{-p} \big\}
\end{eqnarray*}
in either case, leading to the improved estimate
$$
d(\mathcal{K}_{\sigma}) \leq   \frac{  \max_{\alpha : \,P^{\sigma}(\alpha) = 0}  \big\{  \max(|\alpha|,1)^{p^2} \cdot \max(|1/\alpha|,1)^{-p} \big\} }{2^{1/n}},
$$
and to the third clause of Theorem~\ref{sz} on totally $p$-adic coefficients.
 \proofend

\section{Some further results and questions}

In this final section we indicate a further improvement on our basic constant $2^{1/4}$ in
the reciprocal and atoral case of
Theorem~\ref{original}, an extension of Theorem~\ref{original} to heights of holonomic ($D$-finite) power series,
 and an application of Theorem~\ref{sz} raising the possibility
of a Lehmer problem relative over $\Q^{\mathrm{ab}} \cdot \Q^{\mathrm{t.}p}$.

\subsection{} The following becomes of interest upon noting that the best available constant (Dubickas \cite{dubickasnr,dubickasseq})
to date on the non-reciprocal case of the Schinzel-Zassenhaus problem
is $0.3096\ldots < 0.346 < (\log{2}) / 2$. In the case of a reciprocal
polynomial without roots on the unit circle,
we obtain the stronger constant $(\log{2}) / 2$.

\begin{thm} \label{atoral}
  If $P \in \Z[X]$ is a monic reciprocal polynomial of degree $n > 1$ and non-vanishing
  on the unit circle,
 then $P$ has a root $\alpha$ with
$|\alpha| \geq 2^{\frac{1}{2n}}$.
\end{thm}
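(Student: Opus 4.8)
The plan is to keep the arithmetic half of the proof of Theorem~\ref{original} verbatim --- the power series $f(1/X) = \sqrt{P_2^{\star}(1/X)P_4^{\star}(1/X)} \in \Z[[1/X]]$, integral by Proposition~\ref{squarerad} because $P$ monic gives $P^{\star}(0) = 1$, together with Bertrandias's Corollary~\ref{rationalinteger} and Lemma~\ref{square} --- but to sharpen the Archimedean estimate on the transfinite diameter of the hedgehog $\mathcal{K} = \mathcal{K}(\alpha_i^2, \alpha_i^4 : P(\alpha_i) = 0)$ by exploiting the extra symmetries now on hand: $P$ is \emph{real}, so its root set is stable under $\alpha \mapsto \overline{\alpha}$; \emph{reciprocal}, so stable under $\alpha \mapsto 1/\alpha$; and \emph{atoral}, so it misses the unit circle. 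It costs nothing to assume $P$ irreducible: if $P$ is reducible, then at least one of its irreducible factors $Q$ is either reciprocal of degree $< n$ --- and then atoral, still reciprocal, and handled by the theorem applied inductively --- or non-reciprocal, in which case $Q^{\star}$ is a coprime factor, so $\deg Q \leq n/2$ and Smyth's bound~\cite{smythnr} (or $|a| \geq 2$ for a linear factor $X - a$) already furnishes a root of modulus $\geq \rho^{2/n} > 2^{1/(2n)}$, using $\rho^{4} = \rho(\rho+1) > 2$.

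The crux is that these symmetries render every ``inner'' vertex of $\mathcal{K}$ superfluous. Let $v \in \{\alpha^2, \alpha^4\}$ be a vertex of modulus $|v| < 1$, so $v = \alpha^k$ for a root $\alpha$ with $|\alpha| < 1$ and $k \in \{2,4\}$. Then $\beta := 1/\overline{\alpha}$ is a root of $P$ (since $1/\alpha$ is a root by reciprocality, so $\overline{1/\alpha} = \beta$ is one by reality), of modulus $|\beta| = |\alpha|^{-1} > 1$; and the vertex $w := \beta^{k}$ has $\arg(w) = k\arg(\beta) = -k\arg(\overline{\alpha}) = k\arg(\alpha) = \arg(v)$ and $|w| = |\alpha|^{-k} > 1 > |v|$, so $[0,v] \subset [0,w]$. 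Therefore $\mathcal{K}$ equals the sub-hedgehog $\mathcal{K}^{+}$ spanned by the vertices $\{\alpha^2, \alpha^4 : |\alpha| > 1\}$ of modulus $> 1$; these number at most $n$, two for each of the $n/2$ roots outside the unit disk. (Atorality is used exactly here: a vertex on the unit circle would have no larger partner and would raise the count above $n$.) Since those surviving vertices have modulus $\leq M^4$, with $M := \max_i |\alpha_i|$, Dubinin's Theorem~\ref{dubinin} yields $d(\mathcal{K}) = d(\mathcal{K}^{+}) \leq (M^{4n}/4)^{1/n} = M^4 \cdot 4^{-1/n}$, which is $< 1$ once $M < 4^{1/(4n)} = 2^{1/(2n)}$.

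Assume then $M < 2^{1/(2n)}$. The function $f$ continues analytically and single-valuedly to $\widehat{\C} \setminus \mathcal{K}^{+}$: the tree $\mathcal{K}^{+}$ has simply connected complement and contains the entire branch locus of $f$ --- the absorbed vertices sit in the interiors of its spikes, and the origin is not a branch point because $f^{2} = \prod_i (1-\alpha_i^2/X)(1-\alpha_i^4/X)$ has there a pole of the \emph{even} order $2n$ (as $P(0) = 1$ forces $\prod_i \alpha_i = \pm 1$). With $d(\mathcal{K}^{+}) < 1$, Corollary~\ref{rationalinteger} makes $f$ rational, i.e.\ $P_2 P_4$ a perfect square. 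If $P_2$ is irreducible, Lemma~\ref{square} then forces $P$ to be cyclotomic --- absurd, since a cyclotomic polynomial has all its roots on the unit circle whereas our atoral $P$ has $n/2 > 0$ roots off it. If $P_2$ is reducible, then as in the proof of Theorem~\ref{original} the minimal polynomial $Q$ of $\alpha_1^2$ has degree $n/2$ and is again monic, reciprocal and atoral; by induction it has a root of modulus $\geq 2^{1/n}$, and that root being $\sigma(\alpha_1)^2$ for some Galois $\sigma$, the polynomial $P$ has the root $\sigma(\alpha_1)$ of modulus $\geq 2^{1/(2n)}$.

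The step I expect to demand the most care is the collapse $\mathcal{K} = \mathcal{K}^{+}$ together with its analytic consequences: one should check cleanly that the branch locus of $f$ is precisely $\{\alpha_i^2, \alpha_i^4\}$ (so that neither $0$ nor $\infty$ is a branch point), that each small vertex $\alpha^{-k}$ truly lies on the surviving spike $[0, (1/\overline{\alpha})^k]$ --- not merely near it --- and that no coincidence among the $\alpha^2$'s and $\alpha^4$'s can push the number of vertices of $\mathcal{K}^{+}$ beyond $n$. The remaining bookkeeping --- that passing to an irreducible factor, and passing from $P$ to $Q$ when $P_2$ splits, never leaves the class of monic reciprocal atoral polynomials of smaller or equal degree --- is routine but should be spelled out so that the induction on $n$ is watertight.
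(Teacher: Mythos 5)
Your proposal is correct and takes essentially the same route as the paper: the arithmetic half ($\sqrt{P_2^{\star}P_4^{\star}} \in \Z[[1/X]]$, Corollary~\ref{rationalinteger}, Lemma~\ref{square}) is unchanged, and the gain comes, exactly as in the paper's proof, from the fact that the radial pairing $(\alpha, 1/\bar{\alpha})$ of the roots of a real, reciprocal, atoral $P$ about the unit circle collapses the hedgehog to at most $n$ rather than $2n$ non-overlapping spikes, so Dubinin gives $d(\mathcal{K}) \leq (M^{4n}/4)^{1/n} < 1$ once $M < 2^{1/(2n)}$. The paper states this spike-count reduction in one line and leaves the reduction to the irreducible case and the induction implicit; your write-up merely makes those routine steps explicit.
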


\begin{proof}
Since $P$ is reciprocal of degree $n > 1$, and without roots on the unit circle, the degree $n = \deg{P}$ is even and the complex roots
of $P$ occur in $n/2$ radial pairs $(\alpha, 1 / \bar{\alpha})$ about the unit circle.
Thus the hedgehog $\mathcal{K}(\alpha^2, \alpha^4 : P(\alpha) = 0)$ used in the
proof of Theorem~\ref{original} has a total number of at most $n$, rather than $2n$, non-overlapping spikes.
Dubinin's theorem then bounds
$$
d(\mathcal{K}) \leq \Big( \frac{ \max_{\alpha: P(\alpha) = 0} |\alpha|^{4n}}{4} \Big)^{\frac{1}{n}},
$$
whence the constant $2^{1/4}$ of Theorem~\ref{original} improves to $2^{1/2}$ in this situation.
\end{proof}

\subsection{} In this section we compare the bounds in Theorems~\ref{original} and~\ref{atoral} to
previous results available in the literature. The strongest previous asymptotic result on
the Schinzel-Zassenhaus conjecture was the one obtained by Dubickas~\cite{dubickassz} with
Dobrowolski's method: 
$$
\max_{\alpha: \, P(\alpha) = 0} \log{|\alpha|} \geq \Big( \frac{64}{\pi^2} - o_{n \to \infty}(1) \Big) \frac{1}{n} \Big( \frac{\log{\log{n}}}{\log{n}} \Big)^3.
$$
Up to the numerical coefficient, this is of the same strength as Dobrowolski's general lower bound~\cite{dobrowolski2}
on Mahler measure. The latter remains the best known on the Lehmer problem in the case of Salem numbers, 
which is the opposite of the Schinzel-Zassenhaus case studied by our present paper. Dobrowolski's method~\cite{dobrowolski1,dobrowolski2}
is based on an extensive use of the mod $p$ congruences $P_p \equiv P \mod{p \cdot \Z[X]}$, for $P(X) \in \Z[X]$, 
which have since become a standard tool in the subject. 

On the other hand, for the small degrees $37 \leq n < 60$, the sharpest bound remains the one by Matveev~\cite{matveev}, 
with a precision by Rhin and Wu~\cite{rhinwu}: 
\begin{equation} \label{mrw}
\max_{\alpha: \, P(\alpha) = 0} \log{|\alpha|} \geq  \frac{3\log(n/2)}{n^2}.
\end{equation}
Matveev's bound (\ref{mrw}) also holds in the range $12 \leq n \leq 36$, 
but for $29 \leq n \leq 36$ it is superseded by the results of Flammang, Rhin
and Sac-\'Ep\'ee~\cite{numerical} determining the complete list of integer polynomials of 
degree up to $36$ and Mahler measure at most $1.324$, and for $n \leq 28$ it is 
superseded by the results of Rhin and Wu~\cite{rhinwu} determining the least positive value
of $\max_{\alpha: \, P(\alpha) = 0} \log{|\alpha|}$ in each degree $n \leq 28$. 

For $n \geq 59$, our Theorem~\ref{original} becomes stronger than Matveev's bound (\ref{mrw}).
In the ``atoral'' case of a reciprocal polynomial without roots on the unit circle, 
Theorem~\ref{atoral} improves upon (\ref{mrw}) already for $n \geq 20$. 

\subsection{} We also obtain a similar height lower bound for holonomic power series with rational (not necessarily $S$-integral)
coefficients. The following is in the style of Bell-Nguyen-Zannier~\cite{bnz}, and had been our starting point
in the present enquiry; it is the precise counterpart of Theorem~\ref{original}
in the irrational holonomic case. To make the connection, note indeed that
 $h(1/P) = \max_{\alpha : P(\alpha) = 0} \log{|1/\alpha|}$, whenever $P \in \Z[X]$
is an integer polynomial with $P(0) = 1$, and $h(f(X^m)) = h(f(X)) / m$,
where the number of singular points of $f(X^m)$ is $m$ times the number
of singular points of $f(X)$.

\begin{thm} \label{holonomic}
  For $f(X) = \sum_{n \geq 0} a_n x^n \in \Q[[X]] \setminus \{0\}$ a formal power series
  with rational coefficients, define its \emph{height} by
  $$
  h(f) := \limsup_{n \in \N} \frac{1}{n} h([a_0:\ldots:a_n]),
  $$
the upper exponential growth rate of the Weil heights of the
coefficients vectors
of the polynomial truncations of $f$. Suppose that
$f(X)$ satisfies a non-zero linear differential equation
$Lf = 0$, where
$$
L = q(X) \frac{\partial^r}{\partial X^r} + \sum_{i=0}^{r-1} a_i(X) \frac{\partial^i}{\partial X^i}, \quad q; a_0, \ldots, a_{r-1} \in \Q[X]
$$
is a linear differential operator with polynomial coefficients. Let $k$ denote the total number of
distinct roots of the leading polynomial $q(X)$, not counting multiplicities: the total number of (apparent)
singularities for the differential equation, not counting the possible singularity at $\infty$.

Then either $f(X)$ is a rational function of the special form
$f(X) =p(X) / (X^k-1)^m$, with $p(X) \in \Q[X]$ a polynomial; or else its height fulfills the uniform
lower bound
\begin{equation}  \label{uniformrat}
h(f) \geq \frac{1}{150k}.
\end{equation}
In the case that $f(X) \in \Z[[X]]$ has integer coefficients, but is not rational, the numerical coefficient in the lower bound (\ref{uniformrat})
furthermore strengthens to the best-possible value $\log{4}$:
\begin{equation}\label{uniformint}
  h(f) \geq \frac{\log{4}}{k}.
\end{equation}
\end{thm}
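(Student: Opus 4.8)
The plan is to split the argument according to whether $f$ is a rational function and, when it is, whether all its poles are roots of unity --- that last alternative being exactly the exceptional form $p(X)/(X^N-1)^m$. If $h(f)=+\infty$ both asserted bounds are trivial, so assume $h(f)<\infty$. Suppose first that $f$ is rational, written in lowest terms as $f=p/r$ with $r\in\Z[X]$ primitive and $X\nmid r$. The operator $L$ is a left multiple of the minimal (here first-order) annihilating operator of $f$, whose leading symbol is the product of the radicals of $p$ and of $r$; hence every pole of $f$ lies among the $k$ roots of $q$, and in particular the radical $r_0$ of $r$ has $\deg r_0\le k$. A standard estimate on linear recurrences identifies $h(f)$ with $\log M(r_0)$, the logarithmic Mahler measure of $r_0$. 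If every root of $r$ is a root of unity then $r_0\mid X^N-1$ with $N$ the l.c.m.\ of their orders, so $f=p(X)/(X^N-1)^m$, the first alternative. Otherwise $r_0$ is not $\pm$ a product of cyclotomic polynomials, and one checks --- splitting off a non-cyclotomic irreducible factor and invoking Theorem~\ref{original} for those of degree $\ge2$ (the other cases giving $M(r_0)\ge2$ outright) --- that $M(r_0)\ge 2^{1/(4k)}$, whence $h(f)=\log M(r_0)\ge\frac{\log2}{4k}>\frac1{150k}$, which is (\ref{uniformrat}) for rational $f$; a rational power series in $\Z[[X]]$ is excluded from (\ref{uniformint}) by hypothesis, so nothing more is needed there. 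This is the only appeal the proof makes to Theorem~\ref{original}, and it is precisely the linear (rather than quadratic) dependence of the Schinzel--Zassenhaus bound on $\deg\mu$ that makes the denominator $150k$ rather than $150k^2$.

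Assume now $f$ is not rational. Since $h(f)<\infty$, $f$ is not entire --- an entire holonomic power series over $\Q$ of finite height is a polynomial, as clearing its at-most-exponentially-growing denominators turns the super-polynomially decaying coefficients into an integer sequence tending to $0$ --- so $f$ has a finite genuine singularity and finite radius of convergence $R$. Among the $k$ roots $\beta_1,\dots,\beta_k$ of $q$ let $\beta_1,\dots,\beta_{k'}$ ($k'\le k$) be those at which $f$ is not removably singular, so $R=\min_{j\le k'}|\beta_j|$, and in the integral case $h(f)=\log^+(1/R)$. As $f$ continues single-valuedly past the removable singularities, the series $g(1/X):=f(1/X)=\sum_n a_nX^{-n}$ extends analytically and single-valuedly to $\widehat{\C}\setminus\mathcal K$, where $\mathcal K:=\mathcal K(1/\beta_1,\dots,1/\beta_{k'})$ is the hedgehog with $\le k$ spikes joining $0$ to the $1/\beta_j$ (its complement in $\widehat{\C}$ is simply connected, contains $\infty$, and contains no singularity of $g$, those being the $1/\beta_j$ together with $0$ if $f$ is singular at $\infty$). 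Dubinin's Theorem~\ref{dubinin} gives
\[
d(\mathcal K)\le\Big(\frac{\max_j|1/\beta_j|^{\,k'}}{4}\Big)^{1/k'}=\frac{1/R}{4^{1/k'}}\le\frac{1/R}{4^{1/k}}.
\]
If $f\in\Z[[X]]$, the non-Archimedean radii of convergence are all $\ge1$, so Corollary~\ref{rationalinteger} applies: were $h(f)<\frac{\log4}{k}$, then $1/R<4^{1/k}$, hence $d(\mathcal K)<1$, hence $g$ and therefore $f$ would be rational, a contradiction; thus $h(f)\ge\frac{\log4}{k}$, which is (\ref{uniformint}). The constant $\log4$ is optimal: $f(X)=\sqrt{1-4X}\in\Z[[X]]$ is irrational, annihilated by $(1-4X)\partial+2$ (so $k=1$), and has $h(f)=\log4$ since its radius of convergence is $\tfrac14$; here $d(\mathcal K)=d([0,4])=1$ exactly, which is why no contradiction is forced.

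For $f\in\Q[[X]]$ with genuine denominators, Corollary~\ref{rationalinteger} is unavailable and one invokes instead André's rationality theorem~VIII~1.6 of~\cite{andre}, which needs no $S$-integrality but replaces the condition $d(\mathcal K)<1$ by $d(\mathcal K)<\prod_v R_v$, the product over the finite places of the $v$-adic radii of convergence of $f$. Because $h(f)<\infty$, $f$ is a $G$-function, and the theory of $G$-operators (Chudnovsky, André) bounds $\prod_v R_v$ below effectively in terms of $h(f)$ and the order and leading coefficient of $L$; running this through converts the Dubinin estimate above into rationality of $f$ whenever $h(f)<\frac1{150k}$, contradicting irrationality and giving (\ref{uniformrat}). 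The one delicate ingredient in the whole proof is exactly this quantitative control of the $p$-adic denominators of the holonomic $f$: it is there, and not in the Archimedean (Dubinin) input, that the passage from the sharp $\log4\approx1.39$ to the deliberately crude $\tfrac1{150}$ occurs, and any improvement in those $G$-function estimates would sharpen (\ref{uniformrat}) correspondingly.
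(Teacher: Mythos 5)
Your $\Z[[X]]$ case is essentially the paper's own argument (Dubinin's Theorem~\ref{dubinin} for the hedgehog on the singular points of $f(1/X)$, Corollary~\ref{rationalinteger}, and $h(f)=\log^+(1/\rho)$), and your reduction of the rational case to Theorem~\ref{original} is in the spirit of what the paper does. But in the rational case your pivotal identity $h(f)=\log M(r_0)$ is false, and false in the harmful direction: for $f=1/((1-2X)(1-3X))=\sum_n(3^{n+1}-2^{n+1})X^n$ the coefficients are integers, so $h(f)\le\log 3$, while $\log M(r_0)=\log 6$. In general $h(f)$ is governed at each place by the \emph{maximum} of $\log^+|\gamma_i|_v$ over the inverse roots $\gamma_i$ of the denominator, not by their sum, so it is bounded \emph{above} by $\log M(r_0)$ and a lower bound on the Mahler measure does not transfer to $h(f)$. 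The conclusion you want is still recoverable place by place: if some Galois orbit of the $\gamma_i$ consists of algebraic integers not all roots of unity, the single archimedean place already gives $h(f)\ge\log(\mathrm{house})\ge\frac{\log 2}{4k}$ by Theorem~\ref{original}; if some $\gamma_i$ is not an algebraic integer, one finite place alone gives $h(f)\ge\frac{\log 2}{k}$, since a negative valuation of an algebraic number of degree $d\le k$ is at most $-1/d$. (Your reading of the exceptional shape as $p(X)/(X^N-1)^m$ for some $N$ is the correct one.)

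The genuine gap is the irrational $\Q[[X]]$ case, which is the main content of the bound $h(f)\ge\frac{1}{150k}$. You describe Andr\'e's VIII~1.6 as a capacity criterion ``$d(\mathcal{K})<\prod_v R_v$'' and then assert that $G$-operator theory bounds $\prod_v R_v$ below ``effectively in terms of $h(f)$ and $L$'' so that the constant $\frac{1}{150k}$ drops out. That is not what Andr\'e's criterion says, and the quantitative step is unsubstantiated: a Bertrandias--Cantor criterion with non-archimedean radii requires $S$-integral coefficients (Theorem~\ref{rationality}), and for a general holonomic $f\in\Q[[X]]$ the passage from $h(f)$ to the $v$-adic radii goes through Galochkin/Chudnovsky-type size estimates whose constants depend on the order of $L$ and on $\deg q$, not merely on the number $k$ of distinct roots of $q$; you give no computation showing that a bound of the shape $c/k$ survives this route. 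The paper proceeds quite differently and avoids per-place radii altogether: it invokes Andr\'e's criterion in uniformization form (rationality follows from radii $S\ge R>0$ with $\log R-\frac{3}{4}\log S>3h(f)$ together with an analytic $x:D(0,R)\to D(0,S)$, $x(0)=0$, $x'(0)=1$, such that $f(x(z))$ converges on $D(0,R)$), applies Dubinin's theorem to bound from below the conformal mapping radius of the radially slit disk $D(0,S)$ minus the slits through the finite singularities, with the explicit value $R=4^{1/k}bS^2/(b^k+S^k)^{2/k}$ in the symmetric case and $b=\min_{\alpha\in\Sigma}|\alpha|$, then chooses $S=2^{1/k}b$ and absorbs the leftover $\log b$ term via the elementary bound $h(f)\ge\log^+(1/b)$ to reach $\frac{1}{150k}$. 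Without this argument, or a genuinely worked-out quantitative $G$-function substitute, your proposal does not establish the lower bound (\ref{uniformrat}) for rational, non-integral coefficients.
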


To see the sharpness of the value $\log{4}$ in the $\Z[[X]]$ case, observe that the key
example (\ref{2int}) of $f(X) := \sqrt{1-4X} \in \Z[[X]]$ has a single branch point 
at $X = 1/4$, and height $h(f) = \log{4}$.

\begin{proof}
The $\Z[[X]]$ case is an immediate combination of Dubinin's Theorem~\ref{dubinin},
Bertrandias's Corollary~\ref{rationalinteger} applied to $f(1/X)$ with $\mathcal{K}$ the 
hedgehog with vertices at the singular points of $f(1/X)$, and the obvious formula $h(f) = \log^+{(1/\rho)}$
whenever $f(X) \in \Z[[X]]$ has rational integer coefficients and 
Archimedean convergence radius $\rho$. 

For the general $\Q[[X]]$ case, we instead
use a rationality theorem of Andr\'e (\cite{andre}, VIII 1.6),
  and combine it with our Theorem~\ref{original} and a similar argument to its proof,
  based on Dubinin's theorem. The following particular case of Andr\'e's theorem results with the
  choices $\mu = 2, \nu = 1$; $\tau := 2$; $y_0 = 1, y_1 = f(X)$; $K = \Q$, $V = \{\infty\}$;
   $f_{1,\infty} = z, g_{\infty} = 1$; $\kappa_{\infty} = R$,
  so that $S := \exp(\chi_{\infty})$:
  \begin{quote}
    {\it The formal power series $f(X) \in \Q[[X]] \setminus \{0\}$
    is rational as soon as there exist two radii $S \geq R > 0$ satisfying
    $$
    \log{R} - \frac{3}{4}\log{S} > 3h(f),
    $$
     and a complex analytic
    function $x(z) : D(0,R) \to D(0,S)$ with $x(0) = 0$ and $x'(0) = 1$, such that
    the power series $f(x(z)) \in \C[[z]]$ is convergent on the disk $D(0,R)$.}
  \end{quote}
  We again apply Dubinin's theorem~\cite{dubinin} (the \emph{Corollary} on page~270).
  Let $\Sigma \subseteq \{ \alpha \in \C^{\times} \mid q(\alpha) = 0 \} \subset \C^{\times}$ be
  the set of finite singularities of $f$, not counting the possible singularity at $\infty \in \widehat{\C}$.
   As those are among the roots of the leading polynomial
  coefficient (``denominator'') $q$, we have $|\Sigma| \leq k$. With $S \geq \max( |\alpha| : \alpha \in \Sigma )$
   a parameter, consider the simply connected planar domain
  (a $k$-slit disk)
  $$
  U := D(0, S) \setminus \big\{ [1, S/|\alpha|] \cdot \alpha : \alpha \in \Sigma  \big\}.
  $$
  By Dubinin's theorem, the conformal mapping radius of $(U,0)$ is not smaller than
  the one for the rotationally symmetric domain
  $$
  D(0,S)  \setminus \big\{ [b, S] \cdot e^{2\pi i h / k} : 0 \leq h < k \big\}, \quad b := \min_{\alpha \in \Sigma}{|\alpha|},
  $$
  which in turn is precisely equal to
  $$
 R := \Big( \frac{4b^kS^{2k}}{(b^k+S^k)^2} \Big)^{1/k} = \frac{4^{\frac{1}{k}}bS^2}{(b^k+S^k)^{2/k}},
  $$
  by an explicit computation of the Riemann map of the slit circular disk
  $D(0,S^k) \setminus [b^k,S^k]$, for which we refer for example to
  Rumely~\cite{rumely}, (2.50) on page~18. Hence, with this choice of $R = R(S,b) = R(S, \min_{ \alpha \in \Sigma}{|\alpha|})$,
  there exists an analytic map $x(z) : D(0,R) \to D(0,S)$ such that $x(0) = 0$, $x'(0) = 1$, and
  the power series $f(x(z)) \in \C[[z]]$ is convergent on the disk $D(0,R) = \{z : |z| < R\}$.
 With the arbitrary parameter $S \geq b$, Andr\'e's theorem yields that either $f$ is rational,
 in which case the conclusion follows from Theorem~\ref{original}; or else it fulfills the height
 lower bound
 $$
 h(f) \geq \frac{\log{4}}{3k} +  \frac{1}{3}\log{b} - \frac{1}{4}\log{S} - \frac{2}{3k}\log{\big(1 +(b/S)^k  \big)}.
 $$
 Choosing $S := 2^{1/k}b$, and estimating
 $$
 \frac{\log{4}}{3} - \frac{\log{2}}{4} - \frac{2}{3}\log{(3/2)} = 0.018\ldots > \frac{1}{100},
 $$
 we get $h(f) \geq \frac{1}{100k} + \frac{1}{12}\log{b}$.
 We have in any case $h(f) \geq \log^+(1/b)$, on considering that $1/b$ is the Archimedean convergence
radius of $f(X)$, hence $h(f) \geq \frac{1}{100k} - \frac{1}{12}h(f)$, and finally
$h(f) \geq \frac{1}{150k}$.
\end{proof}

\subsection{} We give an application of Theorem~\ref{holonomic} to an upper estimate
of the least critical value for certain rational functions.
In the polynomial case it is subsumed by Dubinin's solution~\cite{dubinincritval}
of an extremal problem of Smale~\cite{smale}, but for rational functions this
type of inequality seems to be new. It would be interesting to clarify the 
cases where the equality holds (they include $R(z) = z \pm z^2$, and no further
polynomials thanks to Dubinin's bound~\cite{dubinincritval}), and to
 give a refinement about
rational functions in $\C(z)$, rather than $\Q(z)$. 

\begin{corol}  \label{criticalval}
  Let $R(z) \in \Q(z)$ be a rational function of degree $n > 1$ and having
  $R(0) = 0$ and $R'(0) \neq 0$. Write $R(z) = P(z) / Q(z)$ with $P(z), Q(z) \in \Z[z]$
  integer polynomials, and assume that $P'(0) = 1$. Then the least critical value of 
  $R(z)$ satisfies the upper bound
   \begin{equation} \label{criticalineq}
  \min \big\{ |R(w)| : R'(w) =0 \big\} \leq  \exp\Big( - \frac{\log{4}}{\deg{P}+\deg{Q}-1}\Big).
  \end{equation}
\end{corol}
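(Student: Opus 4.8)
The plan is to apply Theorem~\ref{holonomic} (in its integer-coefficient case) to the compositional inverse $f(X) := R^{-1}(X)$ of the rational function $R$. Since $R(0)=0$ and $R'(0)\neq0$, this inverse is a well-defined formal power series $f(X)=\sum_{n\ge1}c_nX^n$ with $c_1 = 1/R'(0) = Q(0)\neq0$, characterized by the identity $P(f(X)) = X\,Q(f(X))$. The first point to check is that $f$ has \emph{integer} coefficients. This is where the normalization $P'(0)=1$ enters: writing $P(Y)=Y+p_2Y^2+\cdots\in\Z[Y]$, with linear coefficient $1$, and $Q(Y)\in\Z[Y]$, the coefficient of $X^n$ on the left-hand side of $P(f(X))=X\,Q(f(X))$ equals $c_n$ plus a polynomial with integer coefficients in $c_1,\dots,c_{n-1}$, while the right-hand side contributes another such polynomial; no division occurs, so $c_n\in\Z$ by induction, and $f\in\Z[[X]]$. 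It is this integrality that will eventually yield the constant $\log 4$ rather than the weaker $1/150$ of Theorem~\ref{holonomic}. Moreover $f$ annihilates $P(Y)-X\,Q(Y)\in\Z[X,Y]\setminus\{0\}$, so $f$ is algebraic over $\Q(X)$, hence holonomic; and $f$ is not a rational function, since $R\circ f=\mathrm{id}$ would force $\deg R\cdot\deg f=1$, contradicting $\deg R=n>1$. In particular $f$ is not of the excluded special form $p(X)/(X^k-1)^m$.

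Next I would locate and count the finite singular points of $f$. Over the complement of the (finite) set of critical values of $R$ the map $R$ is an unramified covering, so the branch of $R^{-1}$ through $0$ continues analytically along every path avoiding the critical values; slitting the plane along the radial rays from each critical value $c$ outward to $\infty$ makes this branch single-valued on a simply connected domain containing $0$. Consequently the set $\Sigma$ of finite singular points of $f$ lies inside the set of finite critical values of $R$, the power series $f(1/X)\in\Z[[1/X]]$ extends analytically to the connected region $\widehat{\C}\setminus\mathcal{K}$ with $\mathcal{K}=\mathcal{K}(1/c : c\in\Sigma)$ the hedgehog on the reciprocals of the points of $\Sigma$ (the images of those rays under $X\mapsto1/X$), and the Archimedean radius of convergence of $f$ equals $\rho=\min_{c\in\Sigma}|c|$. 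Writing $R=P/Q$, the finite critical points of $R$ are the roots of $P'Q-PQ'$, a nonzero polynomial (as $n>1$) of degree at most $\deg P+\deg Q-1$; hence $|\Sigma|\le\deg P+\deg Q-1=:N$. We may assume $0\notin\Sigma$, for otherwise $0$ is a critical value and $\min\{|R(w)|:R'(w)=0\}=0$, so (\ref{criticalineq}) is trivial; and $\Sigma\neq\emptyset$ because an algebraic, non-polynomial function must have a finite singularity.

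The conclusion then follows as in the proof of Theorem~\ref{holonomic} in the $\Z[[X]]$ case. Dubinin's Theorem~\ref{dubinin} bounds the transfinite diameter of the at-most-$N$-spiked hedgehog by $d(\mathcal{K})\le\bigl(\max_{c\in\Sigma}|1/c|^{|\Sigma|}/4\bigr)^{1/|\Sigma|}=\rho^{-1}\,4^{-1/|\Sigma|}$; since $f$ is not rational, Bertrandias's Corollary~\ref{rationalinteger} applied to $f(1/X)$ forces $d(\mathcal{K})\ge1$, whence $\rho\le4^{-1/|\Sigma|}\le4^{-1/N}=\exp\bigl(-(\log 4)/(\deg P+\deg Q-1)\bigr)$. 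As the points of $\Sigma$ are critical values of $R$, we have $\min\{|R(w)|:R'(w)=0\}\le\min_{c\in\Sigma}|c|=\rho$, which is (\ref{criticalineq}). I expect the step needing the most care to be the bound $|\Sigma|\le\deg P+\deg Q-1$: verifying that the analytic singularities of the functional inverse $R^{-1}$ are indeed among the critical values of $R$, and that these are at most $\deg(P'Q-PQ')$ in number. The remaining ingredients are direct adaptations of the arguments already given for Theorem~\ref{holonomic}.
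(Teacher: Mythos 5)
Your construction is genuinely different from the paper's (you take $f=R^{-1}$, the functional inverse, whereas the paper takes the diagonal of $\frac{1}{XQ(Y)-P(Y)/Y}$), and your integrality argument via $P(f(X))=XQ(f(X))$ and the non-rationality of $f$ are fine. But there is a genuine gap at the step you yourself flag: the claim that the finite singular points of the branch of $R^{-1}$ through $0$, continued in the radially slit plane, lie among the critical values of $R$. This is false in general: when $\deg P=\deg Q$, the continued branch can acquire a \emph{pole} at the non-critical value $R(\infty)$. Concretely, take $P(z)=z+2z^2$, $Q(z)=1+13z+20z^2$, $R=P/Q$ (so $P'(0)=1$, $R(0)=0$, $R'(0)=1$, $n=2$). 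Here $P'Q-PQ'=6z^2+4z+1$, the two critical values are $\frac{9\pm 2\sqrt{2}\,i}{89}$, of modulus $89^{-1/2}\approx 0.106$, and the slits from them go off at arguments $\approx\pm 17^{\circ}$, so the real segment $[0,\tfrac1{10}]$ lies in your slit domain. Solving $(2-20x)z^2+(1-13x)z-x=0$, the branch through $0$ is $g(x)=\frac{-(1-13x)+\sqrt{89x^2-18x+1}}{2(2-20x)}$ (positive square root on $\R$, as the radicand has no real zeros); at $x=R(\infty)=\tfrac1{10}$ the numerator equals $0.6\neq 0$ while the denominator vanishes, so $g$ has a pole at $\tfrac1{10}$, a point that is neither a critical value nor on the hedgehog $\mathcal{K}(1/c:c\ \mathrm{critical\ value})$. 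Hence $f(1/X)$ is only meromorphic, not analytic, on $\widehat{\C}\setminus\mathcal{K}$, and Corollary~\ref{rationalinteger} as stated cannot be invoked; note also that in this example the pole ($0.1$) is even closer to the origin than every critical value, so your identification of the convergence radius and your final inequality $\min\{|R(w)|:R'(w)=0\}\le\min_{c\in\Sigma}|c|$ both break if one simply enlarges $\Sigma$ to include the pole.

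The gap is repairable, but it needs an explicit extra step. Since the only possible pole of the continued branch is at the rational point $R(\infty)=\mathrm{lead}(P)/\mathrm{lead}(Q)$, you can multiply $f(1/X)$ by $\big(\mathrm{lead}(P)-\mathrm{lead}(Q)\,X^{-1}\big)^{n}\in\Z[1/X]$: the product is again in $\Z[[1/X]]$, is analytic off the hedgehog spanned by the reciprocals of the genuine branch-point singularities (which \emph{are} among the critical values), and its rationality would force that of $f$; then your Dubinin--Bertrandias count goes through, with at most $\deg P+\deg Q-1$ spikes (indeed $\deg P+\deg Q-2$ in the equal-degree case, where $P'Q-PQ'$ drops degree). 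This is precisely the point where the paper's route differs: its diagonal series equals, near $0$, the composite $-Q(g(z))/(P'Q-PQ')(g(z))$ rather than $g(z)$ itself, and the extra rational factor tames the behaviour as $g\to\infty$, so the paper never has to confront the pole of the bare inverse branch. Without some such device, your proposal as written does not establish the corollary.
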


\begin{proof}
  Consider the bivariate rational function
$$
F(X,Y) = \sum_{i,j \geq 0} a_{i,j}X^iY^j := \frac{1}{XQ(Y) - P(Y) / Y} \in \Z[[X,Y]],
$$
where the integrality property of the expansion comes from a geometric series 
expansion using our assumption
$P(Y) / Y \in 1 + Y\Z[[Y]]$.
Let $f(z) := \sum_{i \geq 0} a_{i,i} z^i$ be the \emph{diagonal} power series. It is algebraic
by the Cauchy integral formula
\begin{eqnarray*}
f(z) = \frac{1}{2\pi i} \int_{|y| = \epsilon} \frac{1}{(z/y)Q(y) - P(y) / y} \, \frac{dy}{y} \\
=  \frac{1}{2\pi i} \int_{|y| = \epsilon} \frac{Q(y)}{z - R(y)} \, dy,
\end{eqnarray*}
whose evaluation by residues shows
more precisely that $f(z)$ is algebraic of degree $n$ and with branch points
 among the critical value set $\{ R(w) : R'(w) = 0 \}$ of the rational function $R(z)$. 

As $f(z) \in \Z[[z]]$, the height $h(f) = \log^+{(1/\rho)}$, where $\rho$ is the Archimedean
convergence radius. By the remark on the branch points, the latter is of the form $\rho = |R(w)|$
for some critical value $R(w)$, $R'(w) = 0$, of $R$. 
As $f(z) \notin \Q(z)$ and has $\deg{P} + \deg{Q} - 1$ singularities (the critical values of the rational function
$R = P/Q$), Theorem~\ref{holonomic}
yields the minorization  $\log^+{1/|R(w)|} \geq \frac{\log{4}}{\deg{P}+\deg{Q}-1}$. 
 This is equivalent to (\ref{criticalineq}).
\end{proof}

\subsection{}
Our next result, an application of Theorem~\ref{sz}, raises the possibility of a relative Lehmer conjecture over totally $p$-adic fields.
The $\Q^{\mathrm{ab}}$-relative conjecture, and a Dobrowolski-strength result in this direction, was
discovered by Amoroso and Zannier~\cite{amorosozannier} soon-after the lead~\cite{amorosodvornicich} by Amoroso and Dvornicich.
The Bogomolov height gap property for the field $\Q^{\mathrm{ab}} \cdot \Q^{\mathrm{t.}p}$
is itself contained in Theorem~1.5 in Amoroso-David-Zannier~\cite{adz}. The following result,
and the question whether its exponent  ``$-2$'' may possibly be raised to the Lehmer-strength
exponent ``$-1$,'' is then very much in the spirit of~\cite{adz} and~\cite{amorosozannier}.

\begin{thm} \label{relheightbound}
  For $\alpha \in \mathbb{G}_m(\bar{\Q}) \setminus \mu_{\infty}$ a non-torsion algebraic point of the multiplicative group,
  and $p$ an arbitrary prime, the following lower bounds hold on the absolute canonical Weil height:
\begin{eqnarray*}\label{weilheight}
 h(\alpha) \geq \log{2} \cdot ( p \cdot [\Q^{\mathrm{t.}p}(\alpha) :   \Q^{\mathrm{t.}p}])^{-2}.
\\
  h(\alpha) \geq (\log{\sqrt{2}}) \cdot p^{-3} \cdot [\Q^{\mathrm{ab}} \cdot \Q^{\mathrm{t.}p}(\alpha) :  \Q^{\mathrm{ab}} \cdot \Q^{\mathrm{t.}p}]^{-2}.
\end{eqnarray*}
Moreover, if either $\alpha$ is an algebraic integer or $p$ is unramified in $\Q(\alpha)$, then
the latter bound strengthens to
$$
  h(\alpha) \geq (\log{\sqrt{2}}) \cdot (p \cdot [\Q^{\mathrm{ab}} \cdot \Q^{\mathrm{t.}p}(\alpha) :  \Q^{\mathrm{ab}} \cdot \Q^{\mathrm{t.}p}])^{-2}.
$$
\end{thm}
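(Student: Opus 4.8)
The plan is to deduce this from Theorem~\ref{sz} applied to the minimal polynomial of $\alpha$ over the relevant base field, exactly in the way that a Lehmer-type height bound follows from a Dobrowolski-type inequality on polynomials. First I would set $L := \Q^{\mathrm{t.}p}$ (respectively $L := \Q^{\mathrm{ab}}\cdot\Q^{\mathrm{t.}p}$) and let $P(X) \in L[X]$ be the minimal polynomial of $\alpha$ over $L$, of degree $n = [L(\alpha):L]$; since $\alpha$ is non-torsion and $\mu_\infty \subset L$, the polynomial $P$ is not cyclotomic, and if $n = 1$ the point $\alpha$ already lies in $L$ so the bound is (vacuously, up to checking $\alpha \notin \mu_\infty$) trivial, hence we may assume $n > 1$. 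Choosing $K \subset L$ a finite subextension of $L/\Q$ containing the coefficients of $P$, Theorem~\ref{sz} provides the inequality relating $\log c_{P/K}$ and the archimedean sum $\tfrac{1}{[K:\Q]}\sum_\sigma \max_\alpha \log^+|\sigma(P)(\alpha)|$ to $\tfrac{\log 2}{n}$.

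The key step is then to recognize that the left-hand side of the Theorem~\ref{sz} inequality is (a constant multiple of) the Weil height $h(\alpha)$. Indeed, by the standard formula for the Weil height in terms of a defining polynomial — $h(\alpha) = \tfrac{1}{[K:\Q]\, n}\big(\log c_{P/K} + \sum_{\sigma : K \hookrightarrow \C}\sum_{\alpha': \sigma(P)(\alpha')=0}\log^+|\alpha'|\big)$ — and the elementary bound $\sum_{\alpha'}\log^+|\alpha'| \leq n\max_{\alpha'}\log^+|\alpha'|$ for each conjugate polynomial, the archimedean term in Theorem~\ref{sz} is at most $n\cdot[K:\Q]\cdot h(\alpha)$ worth of contribution, while the $c_{P/K}$ term is handled by the same Weil-height formula (here one uses that $c_{P/K} \le |N_{K/\Q}(a_0)|$, so $\tfrac{1}{[K:\Q]}\log c_{P/K} \le n\, h(\alpha)$ as well, after absorbing the local contributions above the primes dividing $a_0$). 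Combining, the full left-hand side of each clause of Theorem~\ref{sz} is bounded above by a fixed constant (depending only on $p$ and on which clause applies) times $n\cdot h(\alpha)$; matching this against the right-hand side $\tfrac{\log 2}{n}$ (or $\tfrac{\log 2}{2p^2 n}$, $\tfrac{\log 2}{p^2 n}$) yields $h(\alpha) \gg n^{-2}$ with the stated constants. The three regimes of Theorem~\ref{relheightbound} correspond precisely to the three clauses of Theorem~\ref{sz}: the general ramified $\Q^{\mathrm{ab}}\cdot\Q^{\mathrm{t.}p}$ case gives the $p^{-3}$, exponent-$(-2)$ bound with constant $\log\sqrt 2$; the unramified (or algebraic-integer, where ramification is irrelevant since $c_{P/K}=1$ and only the second clause's shape is needed) case gives the sharper $(p\cdot[\,\cdot\,])^{-2}$ bound; and the purely totally-$p$-adic case, using the third clause, gives the first displayed bound $h(\alpha) \ge \log 2\cdot(p[\Q^{\mathrm{t.}p}(\alpha):\Q^{\mathrm{t.}p}])^{-2}$ — here one additionally throws away the subtracted $\min_\alpha \log^+|1/\alpha|$ term, which only helps.

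The main obstacle I anticipate is purely bookkeeping: carefully reconciling the normalizations. Theorem~\ref{sz} is phrased with $\max_\alpha \log^+|\alpha|$ over roots, whereas the Weil height naturally involves the \emph{sum} over roots; one must verify that replacing the sum by $n$ times the max (the only direction that is automatic) does not cost more than the factor already absorbed into the constants — and indeed it does not, because Theorem~\ref{sz} itself is stated with the max, so the passage is lossless in the needed direction. A second small point is the treatment of $c_{P/K}$ versus $|N_{K/\Q}(a_0)|$ and the local heights at finite places: one should note that the Weil height formula accounts for all finite places, and $\log c_{P/K}$ captures exactly the contribution of the finite places where $P$ is non-integral (the content of $N_{K/\Q}(P)$), so no terms are double-counted and the inequality $\tfrac{1}{[K:\Q]}\log c_{P/K} \le n\,h(\alpha)$ holds with room to spare. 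Once these normalizations are pinned down, each of the three bounds drops out by a one-line substitution, and the claimed numerical constants $\log 2$ and $\log\sqrt 2$ are exactly what the right-hand sides of the respective clauses of Theorem~\ref{sz} produce after dividing through.
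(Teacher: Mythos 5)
Your proposal is correct and follows essentially the same route as the paper: apply Theorem~\ref{sz} to the minimal polynomial $P$ of $\alpha$ over $\Qtp$ (resp. $\Q^{\mathrm{ab}}\cdot\Qtp$), bound the archimedean term via the trivial comparison of the maximum with the sum over roots, and identify $\frac{1}{[K:\Q]}\big(\log c_{P/K}+\sum_{\sigma,\beta}\log^+|\beta|\big)$ with $n\,h(\alpha)$ through the Mahler measure of the minimal equation over $\Z$, the three clauses of Theorem~\ref{sz} yielding the three stated bounds (with the first clause and $c_{P/K}=1$ covering the ramified algebraic-integer case). One small remark: the inequality actually needed is the trivial $\max_\beta\log^+|\beta|\le\sum_\beta\log^+|\beta|$ rather than the reverse bound $\sum\le n\max$ you quote, but this slip does not affect your argument, whose bookkeeping and constants come out exactly as in the paper.
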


\begin{proof}
  Let
  \begin{eqnarray*}
  n := [\Q^{\mathrm{t.}p}(\alpha) :  \Q^{\mathrm{t.}p}], \quad \textrm{resp.} \\
  n :=[\Q^{\mathrm{ab}} \cdot \Q^{\mathrm{t.}p}(\alpha) :  \Q^{\mathrm{ab}} \cdot \Q^{\mathrm{t.}p}]
  \end{eqnarray*}
  so that $\alpha$ satisfies a degree-$n$ equation $P(\alpha) = 0$ over  some number field $K \subset \Qtp$, resp. $K \subset \Q^{\mathrm{ab}} \cdot \Q^{\mathrm{t.}p}$.
For every complex field embedding $\sigma : K \hookrightarrow \C$, we estimate in Theorem~\ref{sz} trivially
\begin{eqnarray*}
 \max_{\beta: \, \sigma(P)(\beta) = 0} \log^+{|\beta|}
\leq \sum_{\beta: \, \sigma(P)(\beta) = 0} \log^+{|\beta|}.
\end{eqnarray*}
It remains to note that
$$
h(\alpha) = \frac{m( F )}{[K:\Q]} = \frac{1}{[K:\Q]} \log{c_{P/K}} + \frac{1}{[K:\Q]} \sum_{\substack{ \sigma: K \hookrightarrow \C \\
\beta: \sigma(P)(\beta)  = 0 }} \log^+{|\beta|},
$$
where $F = \mathrm{const} \cdot N_{K/\Q}(P) \in \Z[X]$ is the minimal equation of $\alpha$ over $\Z$;
its leading coefficient is $c_P$ by definition.
The result now follows from Theorem~\ref{sz}.
\end{proof}



\subsection{Some open problems} In the reverse direction
to Theorem~\ref{relheightbound}, the example of the $n$-th radical from a root of the $\Q_p$-split equation $X^{p-1} - pX - 1 = 0$
shows that $h(\alpha)$ and
$$
\frac{1}{[K:\Q]} \sum_{\sigma : K \hookrightarrow \C} \max_{\alpha: \, \sigma(P)(\alpha) = 0} \log^+{|\alpha|}
$$
can both be as small as $\lesssim \frac{\log{p}}{p-1} \frac{1}{n}$ as $P$ ranges
over the irreducible degree-$n$ monic non-cyclotomic
polynomials over the ring of algebraic integers in $\Qtp$, for every $p$ and $n$,
where the implied coefficient is understood to approach $1$ in the $p \to \infty$
asymptotic, uniformly in $n$. This leaves something to be desired
in both the $p$ and $n$ aspects of Theorem~\ref{relheightbound}, and
indeed already in the $p$ aspect of Theorem~\ref{sz}.

While it is known (Pottmeyer~\cite{pott}) that
$$
\inf_{\alpha \in \mathbb{G}_m(\Qtp) \setminus \mu_{\infty}} h(\alpha) = (1 - o_{p \to \infty}(1))\frac{\log{p}}{p},
$$
to the best of our knowledge it is already an open question which of the basic $\ll \frac{\log{p}}{p}$ upper or $\gg \frac{\log{p}}{p^2}$
lower bounds  is closer to the truth on the quantities
$$
\inf_{ \substack{ \alpha \in \mathbb{G}_m(\bar{\Q}) \setminus  \mu_{\infty} \\ [\Qtp(\alpha):\Qtp] = 2 }}{ h(\alpha)}
\quad \textrm{and} \quad \liminf_{\alpha: \, [\Qtp(\alpha):\Qtp] = 2 } h(\alpha);
$$
the $\liminf$ here denoting the first accumulation point of the heights of algebraic numbers
fulfilling a quadratic equation over a totally $p$-adic number field.
One may even ask, extending a conjecture of Fili~\cite{fili} (possibly
also adding
several simultaneous splitting conditions), whether the basic lower bounds
\begin{eqnarray*}
\liminf_{\substack{ \alpha \in \bar{\Z} \textrm{ algebraic integer} \\ \, [\Qtp(\alpha):\Qtp] = n}}{ h(\alpha)} \geq \frac{1}{p-1} \frac{\log{p}}{n}
\end{eqnarray*}
and
$$
\liminf_{\substack{ \alpha \in \bar{\Q} \\ \, [\Qtp(\alpha):\Qtp] = n}}{ h(\alpha)} \geq \frac{p}{p^2-1} \frac{\log{p}}{n}
$$
on the respective first accumulation points
ought to be equalities.

Whether or not a totally real or CM analog should exist of Theorems~\ref{sz} and~\ref{relheightbound}, under the additional (and necessary)
proviso that the polynomial $N_{K/\Q}(P) \in \Z[X]$ has at least one root off the unit circle, remains completely open to the method.
Likewise for the apparently closely related generalization  of the Schinzel-Zassenhaus problem raised by Chowla
and Blanksby~\cite{blanksby},
who drew a motivation from Tur\'an's power sums method: {\it if a non-monic integer polynomial $qX^n + \cdots \in \Z[X]$ of degree $n$
and leading coefficient $q$
has a root outside of the closed unit disk $|X| \leq 1$, does it also have a root outside of the larger disk $|X| \leq 1 + c / (q n)$?}

Lastly, apart from the more routine task of extending Theorem~\ref{sz} relatively over
certain  infinite degree fields more general than $\Q^{\mathrm{ab}} \cdot \Qtp$ (such as the ones
studied for Property (B) by Amoroso, David and Zannier~\cite{adz}), it could be valuable to try to extend the method
higher dimensionally, the first interesting case being a $\mathbb{G}_m^2$ version of the Schinzel-Zassenhaus problem in the spirit of Amoroso and David~\cite{amorosodavid}: \emph{if $\alpha , \beta \in \bar{\Q}^{\times}$ are multiplicatively independent non-zero algebraic numbers,
and $n$ denotes the minimum degree of a non-zero algebraic relation $P(\alpha,\beta) = 0$ in $\Z[X,Y]$, is there some
complex field embedding $\sigma: \Q(\alpha,\beta) \hookrightarrow \C$ under which at least one of $\log{|\sigma(\alpha)|}$
or $\log{|\sigma(\beta)|}$ is $\gg 1/n$?} The Amoroso-David theorem~\cite{amorosodavid} includes a proof of the weaker $\gg_{\epsilon} n ^{-1-\epsilon}$
bound, for any $\epsilon > 0$. Whereas the rationality theorem has multivariate variants such as VIII 1.6 in Andr\'e~\cite{andre},
a major new difficulty here appears to be in interpreting the multiplicative independency hypothesis for $\alpha$ and $\beta$,
and finding something to take the place of our basic criterion for cyclotomy via the rationality of $\sqrt{P_2 P_4}$.

\end{document}